\newtheorem{theorem}{Theorem}[section]
\newtheorem{proposition}[theorem]{Proposition}
\newtheorem{lemma}[theorem]{Lemma}
\newtheorem{corollary}[theorem]{Corollary}
\theoremstyle{definition}
\newtheorem{definition}[theorem]{Definition}
\newtheorem{example}[theorem]{Example}
\newtheorem{remark}[theorem]{Remark}
\numberwithin{equation}{section}
\def\e{\varepsilon}
\def\ov{\overline}
\DeclareMathOperator{\Div}{div}
\renewcommand{\div}{\Div}
\DeclareMathOperator{\conv}{Conv}
\DeclareMathOperator{\trace}{Tr}
\DeclareMathOperator{\cl}{cl}
\newcommand{\LL}{\mathcal{L}}
\newcommand{\HH}{\mathcal{H}}
\newcommand{\DD}{\mathcal{D}}
\newcommand{\RR}{\mathbb{R}}
\def\R{\mathbb{R}}
\newcommand{\minus}{\!-\!}
\newcommand{\tinm}[1]{\text{\;#1\;}}
\newcommand{\ps}[1]{\langle #1\rangle}
\newcommand{\1}{{{\bf 1}\kern-0,28em\rm l}}
\newcommand{\one}{{{\bf 1}\kern-0,28em\rm l}} 
\newcommand{\med}{\medskip\noindent}
\newcommand{\f}{\varphi}
\def\sepa{\; : \;}
\def\sto{\overset{*}{\rightharpoonup}}
\def\n{{\bf n}}
\def\proj{\Pi}
\newcommand{\aco}[1]{\left\{#1\right\}}
\newcommand{\pare}[1]{\left(#1\right)}
\def\res{\text{\huge$\llcorner$}}
\def\a{\alpha}
\def\O{\Omega}
\def\l{\lambda}
\date{}
\begin{document}

\title[Calibrations for free boundary and Cheeger-type problems] {Calibrations  for minimal surfaces with free boundary and Cheeger-type problems}

\medskip

\dedicatory{ In memory of Hedy Attouch, his mathematical elegance, and his valuable friendship over the years. }

\author[G.~Bouchitt\'e, Minh.~Phan]{Guy Bouchitt\'e\and  Minh Phan}
\address[Guy Bouchitt\'e]{UFR des Sciences et Techniques\\ Universit\'e de Toulon et du Var, BP 132\\ 83957 La Gard Cedex (France)
}
\email{bouchitte@univ-tln.fr}

\address[Minh  Phan]{ The University of Danang - University of Science and Education, Da Nang 550000, (Viet Nam)}
\email{ptdminh@ued.udn.vn}

\begin{abstract}
We study a problem of minimal surfaces with free boundary  written in  the form of a non convex minimization problem. 
Our aim is to characterize  optimal solutions by finding a suitable calibration field. A natural upper bound of the infimum is given by a variant of the Cheeger problem that we solve explicitly proving the optimality  thanks to the construction of a cut-locus potential. The comparison with the original problem is then discussed in detail.
\end{abstract}

\maketitle

\noindent \textbf{Keywords:}  {Free boundary problems, Calibrations, Minimal surfaces, Shape derivative, Cheeger sets, cut-locus potential.}

\noindent \textbf{2020 Mathematics Subject Classification:}  
49J45, 49N15, 49Q10, 65K10, 90C26.

\section{Introduction}

In all the paper, $D$ is a given  bounded  domain of $\RR^N$ with Lipschitz boundary. For any subset $\Omega\subset D$, we denote by $|\Omega|:=\LL^N(\Omega)$ its area (the $N$-dimensional Lebesgue measure of $\Omega$) and by $P(\Omega)$ its perimeter. Recall that, if $P(\Omega)<+\infty$, then $P(\Omega)=\HH^{N-1}(\partial \Omega)$, i.e. the $(N-1)$-dimensional Hausdorff  measure of the (essential) boundary of $\Omega$.

\medskip
Given   $\lambda \ge 0$,   our aim is to study  the following variational problem: 
\begin{align}\label{pbbeta}
\beta(\lambda)&:=\inf \left\{ \int_D \sqrt{1+|\nabla u|^2}\; dx -  \lambda\;|\{u \ge 1\}|\; : \; u\in W^{1,1}_0(D) \right\}.
\end{align}
By using a truncation argument, one checks easily that the infimum above is unchanged if we restrict the infimum to competitors $u$
such that $0\le u\le 1$.  The free boundary associated with such $u$ is then the essential boundary of the set $\Omega=\Omega(u) := \{ u = 1\}$.  Accordingly, the minimization problem \eqref{pbbeta} contains two terms in competition, on the one hand the minimal area of a parametrized surface with $\partial D\times \{0\}$ and $\partial \O \times \{1\}$ boundaries, and on the other hand the area of the unknown subset $\Omega$ times the scaling factor $\lambda$.
An alternative point of view is to  see \eqref{pbbeta} as the minimization  of  the shape functional:
  $J_\lambda:\ \O\subset D \mapsto  J(\O) + (1-\lambda)\, |\O|$ \ where: 
\begin{equation}\label{shape}
 J(\O)=  \inf\left\{ \int_{D\setminus \O} \sqrt{1+|\nabla u|^2}\; dx \ :\ u = 0 \quad\tinm{on} \partial D,\quad u=1 \quad \tinm{on} \partial\O\right\}  .
\end{equation}

\begin{figure}[H]
\centering
\includegraphics[scale=1]{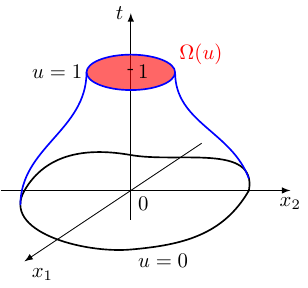}
\caption{A surface with prescribed boundary $u=0$ on $\partial D$ and free boundary $\partial\{u=1\}$.}\label{fminisurface}
\end{figure}
In this context, if $\Omega$ is an optimal set for $J_\lambda$, then a solution $u$ to \eqref{pbbeta} is obtained by solving  \begin{gather}
-\Div \frac{\nabla u}{\sqrt{1+|\nabla u|^2}} = 0 \quad \tinm{in} D\setminus \Omega, \label{curvature}\\ \nonumber
u = 0 \quad\tinm{on} \partial D,\qquad u=1 \quad \tinm{on} \partial\Omega ,
\end{gather}
where \eqref{curvature} encodes the  zero mean curvature of the parametrized surface $z=u(x)$.

In fact, it is well known that the above nonlinear boundary value problem does not always have a solution (see for example \cite{ekeland1974analyse}). In order to start with a well-posed problem, we replace the variational problem \eqref{pbbeta} by the following relaxed formulation, in which the class of competitors $u$ is extended to possibly discontinuous functions (thus violating the boundary conditions):

\medskip
\begin{equation}
\label{betarelax}
\beta(\lambda) = \inf \left\{ E_\lambda(u) \sepa u\in BV(D) \right\}
\end{equation}
where
\begin{align}\label{El}
E_\lambda(u) := \int_D \sqrt{1+|\nabla u|^2}\; dx + \int_D |D^s u| + \int_{\partial D} |u|\; d\HH^{N-1} - \lambda\; |\{ u\ge 1\}|.
\end{align}

Here $BV(D)$ denotes the subspace of functions $u\in L^1(D)$ whose distributional gradient $Du$  is a vector Radon measure of bounded total variation from $D$ to $\RR^N$. The Radon-Nikodym derivative of $Du$ with respect to the Lebesgue measure on $D$ coincides with the a.e. defined approximate gradient $\nabla u$ while $D^s u$ denotes its singular part. Such functions $u$ have a well defined trace  in $L^1(\partial D)$ (see \cite{Suquet}) and a well defined jump set $S_u:=\{u_+>u_-\}$ where $u_-(x),u_+(x)$ are the lower (resp. upper) approximative limits of $u$ at any $x\in D$. It turns out that $S_u$ is an $N-1$ dimensional rectifiable subset of $D$ such that
  $ D^s u = (u_+- u_-) \, \nu_u \HH^{N-1} \res S_u  + D^c u$ where $\nu_u$ is a normal unit vector to $S_u$;   the remainder of $D^s$ on the complement of $S_u$  is the so called Cantor part that we will denote by $D^c u$.  
For further details on the space $BV(D)$, we refer to  \cite{mua1992bv}  and also to \cite{attouch}  for classical relaxation issues in this space. It is easy to check that \eqref{betarelax} admits solutions in $BV(D;[0,1])$ and that the infima defined in
\eqref{pbbeta} and \eqref{betarelax} coincide. However, the solution is not unique (see \cite{attouch-champion} in a simpler context), and characterizing a global minimum turns out to be a very difficult problem. This is due to the nonconvexity of the term $-\lambda|\aco{u \ge 1}|$. In fact, if we restrict the definition of $E_\lambda(u)$ to $u\ge 0$, we can rewrite this term as the integral $\lambda \, \int_D g (u)dx$ where $g: \R\to \R$:
\begin{align}\label{def:g}
g(t)=\begin{cases} -  t & \tinm{if} t\le 0 \\ 
0 &\tinm{if} 0\le t<1,\\
- 1 &\tinm{if} t\ge 1.
\end{cases}
\end{align}
This integrand $g$ is lower semicontinuous non-increasing and admits a jump at $t=1$. Its convexification $g^{**}$
given below is such that $\{g^{**}<g\}=(0,1)$:
\begin{align}\label{co-g}
 g^{**}(t) =  -  \min \{t, 1\} .
\end{align}  
In order to overcome this lack of convexity,  we will use a duality recipe in dimension $N+1$ which was developed  in \cite{boufra2018} for functionals of the kind $u\in W^{1,p}(D) \to \int_D (f(\nabla u)+ g(u))\, dx$  
for general non-convex lower semicontinuous functions  $g:\R\to (-\infty,+\infty]$, possibly admitting isolated discontinuity points. 
In \cite{boufra2018}, the integrand $f$ was assumed to be convex  and satisfying a $p$-growth lower bound with $p>1$. Recently this duality result has been extended to the case $p=1$ where $f$ has linear growth at infinity (see \cite{bouphan2025}). Moreover,  if $f$ is assumed to be positively one homogeneous (for example $f(\nabla u)=|\nabla u|$),
 an  {\em exclusion principle} was derived in \cite{bouphan2025},  namely:  the minimizers of $\int_D (f(\nabla u)+ g(u))\, dx$ under a Dirichlet condition  $u_0\in L^1(\partial D)$ such that $g(u_0)= g^{**}(u_0)$ cannot take values in the set $\{g^{**}<g\}$.
\medskip

We are also interested  in  a variant of \eqref{pbbeta} where
 the surface-area term is replaced by the total variation of $u$. In virtue of the inequalities $|z| \le \sqrt{1+|z|^2} \le 1+|z|$ holding for all $z\in\RR^N$, we  infer that:
 \begin{align}\label{ineqbeta}
m(\lambda,D) := \beta_0(\lambda) -|D|\ \le\ \beta(\lambda)\ \le\ \beta_0(\lambda)\quad ,
\end{align}
where
  \begin{align}
\beta_0(\lambda):=\inf \left\{ \int_D (1+|\nabla u|)dx -  \lambda|\aco{u= 1}| \; : \; u\in W^{1,1}_0(D), \; 0\le u \le 1 \right\}. \label{beta0}
\end{align}
We are led to another free boundary problem where the PDE counterpart of \eqref{curvature} is given by
\begin{gather*}
-\Div \pare{ \frac{\nabla u}{|\nabla u|}  }= 0 \quad \tinm{in} D\setminus \Omega\ ,\end{gather*}
while the relaxed functional to be minimized in $BV(D;[0,1])$ becomes
\begin{align}
E_\lambda^0(u) := |D|+ \int_D |D u| +  \int_{\partial D} |u|d\HH^{N-1} + \lambda\, \int_D g(u)\, dx\ .\label{E0l}
\end{align}
Here, two important observations underscore the role of this variant of the original problem: 
\begin{itemize}[leftmargin=6mm]
\item  The equality $E_\lambda=E_\lambda^0$ holds on the subset $BV(D; \{0,1\})$. Indeed, if $u$ is the characteristic function $\one_\Omega$ of a subset $\Omega\subset D$, then $\Omega$ has a finite perimeter in $R^d$ given by  $P(\Omega)=  \int_D |D u| +  \int_{\partial D} |u|d\HH^{N-1}$ (in that case $\nabla u=0$ a.e and $D^c u=0$). It follows that: 
\begin{equation}\label{E=E_0}
 u=\one_\Omega \ \implies\ E_\lambda(u)=E_\lambda^0(u)= P(\O)-\lambda |\Omega|.
\end{equation}

\item The non convex variational problem $\beta_0(\lambda)$ in \eqref{beta0} satisfies the  one--homogeneity assumption mentioned above. Thus, in virtue of \cite[Theorem 2.2]{bouphan2025} and since $\{g^{**}<g\}=(0,1)$, any solution to $\beta_0(\lambda)$ is of the form $u=\one_\Omega$. A consequence is that  the left hand side of \eqref{ineqbeta} coincides with the minimum of the following shape optimization problem:
\begin{align}
m(\lambda,D)= \min\aco{P(\Omega) -\lambda |\Omega| \sepa \Omega\subset D}. \label{mlD}
\end{align}
\end{itemize}
Note that the equality $\beta_0(\l)= |D| + m(\l,D)$ can be also recovered by using the co-area formula. Actually  the problem \eqref{mlD}  has been studied in \cite{ alter2009uniqueness,alter2005characterization} as being in  close relation with the celebratred Cheeger problem (see the survey \cite{ parini2011introduction}): 
\begin{equation}\label{def:h_D}
 h_D := \inf\aco{\frac{P(\Omega)}{|\Omega|} \sepa \Omega\subset D} .
\end{equation}

In particular, one has  $m(\lambda, D) =0$ for every $\l \le h_D$ while $m(\l,D)<0$ otherwise. 

\bigskip
The  main contributions of this paper are the following:

\begin{enumerate}[leftmargin=4mm]
\item [-] \ We show that the strict inequality $\beta(\lambda) <\beta_0(\lambda)$ holds if
 $u\equiv 0$ or $u\equiv 1$ are not  minimal for \eqref{pbbeta} (Theorem \ref{beta<beta0}). This latter condition is equivalent to require that $\lambda$ is between  two thresholds $\l_0$ and $\l_1$.
 By Lemma \ref{intervals},  these thresholds satisfy the inequalities:
  \begin{align*}
0< \lambda_0\ \le h_D\ \le \frac{P(D)}{|D|}\ \le\ \lambda_1 \ \le \ +\infty .
\end{align*}
More precise estimates are obtained in Proposition \ref{estim2} by using special calibration fields for the dual problem in dimension $N+1$ associated with
\eqref{pbbeta}. Numerical simulations are given  for radial examples in dimension two. 

\item[-] \ We develop a theory of $\theta$-calibrability for solving $m(\lambda, D)$. In the two-dimensional case,  we associate to any bounded  convex  open set $D$  a specific potential  $\rho:\ov D\to (0, R_D]$  where $R_D$ denotes the inradius of $D$. This potential is  continuous, locally Lipschitz in $D$ with  a maximal plateau $\{\rho=R_D\}$ which coincides with the \emph{central subset}  defined by $U_D:= \{x\in D : d(x,D^c)=R_D\}$.  
In  $D\setminus \ov U_D$, $\rho$ satisfies  $\nabla \rho\not=0$ a.e. and solves the  boundary problem:
\begin{align}\label{masterpde} \div \frac{\nabla \rho}{|\nabla \rho|} +  \frac1{\rho} =0 \quad, \quad \frac{\nabla \rho}{|\nabla \rho|}\cdot \nu_D=- 1 \quad \text{on $\partial D$}\, . \end{align}
 A very simple geometric construction of this so-called cut locus potential is described in 
Section \ref{scutlocus}, which allows us to make explicit a calibrating vector field for any $\l\ge h_D$. Then we deduce that the unique solution  of $m(\lambda,D)$  is given by $\Omega_\l = \{\rho> \frac1{\l}\}$. It  coincides with the union of all balls of radius $\frac1{\l}$ contained in $D$ (see \eqref{def:Olambda}),
so that we recover by a different  method the result in \cite{alter2009uniqueness,alter2005characterization}. 
\end{enumerate}

\medskip

Before we finish this introduction, we want to mention   that the  idea of building a potential $\rho$ associated with a general convex set $D$ can  be  extended in higher dimensions. The same PDE as in \eqref{masterpde} would be kept, and solutions to \eqref{mlD} would still be the upper level sets $\{\rho> \frac1{\l}\}$ for every $\l<h_D$. In turn the geometric characterization that we found for $N=2$ (by means of the normal distance to the cut locus of $D$) does not work any more for $N>2$. This suggests a very interesting open issue worth to explore in future work.
 Note that here the potential $\rho$  depends only of the shape of $D$ (see Remark \ref{newpb}), in contrast with the parametrized potentials introduced in \cite{alter2005characterization}.

\medskip
The paper is organized as follows:

\begin{itemize}[leftmargin=6mm]
\item [-]\ In Section  \ref{secdual}, we present briefly the duality recipe for  the non-convex problem $\beta(\l)$. This leads to an $N+1$ dimentional dual  problem  and to optimality conditions in terms of divergence free calibration vector fields.
In the case of the homogenous variant $\beta_0(\l)$,  this dual problem  reduces to a more classical formulation in dimension $N$.
The optimality of a subset of $\O\subset D$ for \eqref{mlD} is characterized by a $\theta$- calibrability condition.
 
 \item[-] \ In Section \ref{secbb0}, we discuss  the occurrence of the inequality $\beta(\l) < \beta_0(\l)$ according to the value of $\l$ and to the geometric properties of $D$.  Some numerical simulations are given in the case where $D$ is of a disk in $\R^2$.
%

\item[-] \ In Section \ref{scutlocus}, we restrict ourself to the case of a convex subset $D\subset \R^2$. After a short background on Cheeger sets, we introduce the  called cut-locus potential $\rho$ and focus on the explicit  construction of  a vector field  $q: D \mapsto \R^2$ which calibrates the unique solution $\O_\l$ of  \eqref{mlD}.
 
\end{itemize}

%

\section{Dual problems and calibrations}
\label{secdual}

The variational problems associated respectively with $\beta(\lambda)$ and $\beta_0(\lambda)$ are non convex with linear growth. They enter the duality framework developed initially in $W^{1,p}(D)$ for $p>1$  (see \cite{boufra2018}), further extended to $W^{1,1}(D)$ and $BV(D)$  in \cite{bouphan2025}, where $D$ is a bounded domain in $\R^N$. In these references a characterization of global minimizers is provided by using calibration fields defined in $L^\infty(D\times I;\R^{N+1})$ 
 where $I$ is an open interval of $\R$ such that $\ov I$  contains the range of all solutions.  
 This section is devoted to state the dual problems in the particular case of $\beta(\l)$ and $\beta_0(\l)$. 
 In both cases,  the solutions range in $[0,1]$ so that we take $I=(0,1)$ and the searched calibration fields are  of the form
 $$ \sigma(x,t) =\left(\sigma^x(x,t), \sigma^t(x,t)\right) \in \R^N\times \R  \quad \text{where $(x,t)\in D\times (0,1)$} . $$
In the following we will denote  
$$   Q := D \times (0,1) .$$

\subsection{Dual problem of \eqref{pbbeta}}\label{dualb0}


Following \cite{boufra2018}, the dual formulation of  the variational problem of \eqref{pbbeta} (or of its relax form \eqref{betarelax}) is given by
the following formulation in $Q$ (thus in dimension $N+1$).
\begin{align}\label{dual-beta}
\sup \aco{ -\int_D \sigma^t(x,0)\, dx \sepa \Div\sigma =0 \tinm{in} Q,\; \sigma^t + \sqrt{1- |\sigma^x|^2} \ge 0 \tinm{in} Q,\; \sigma^t(x,1) \ge \lambda -1 \tinm{on} D},
\end{align}

Adopting a fluid dynamics view point, 
 \eqref{dual-beta} can be interpreted as the maximization of the  downward flow $\sigma=(\sigma^x,\sigma^t)$ of an incompressible fluid ($\Div\sigma =0$ in $Q$) through the bottom interface $D\times\{0\}$ when it is subject to the pointwise non linear constraint $\sigma^t + \sqrt{1- |\sigma^x|^2} \ge 0$ a.e. in $Q$ while $\sigma^t(x,1) \ge \lambda -1$ on the upper interface $D\times \{1\}$.
 
\begin{theorem}\label{duality1} The duality principle given  in \cite{bouphan2025} leads to the following no-gap equality
$$   \beta(\l) \ =\  \sup \eqref{dual-beta} .$$
\end{theorem}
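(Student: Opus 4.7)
The plan is to reduce the statement to the general non-convex duality theorem established in \cite{bouphan2025}, applied to the linear-growth convex integrand $f(p)=\sqrt{1+|p|^2}$ and to the non-convex lower semicontinuous $g$ defined in \eqref{def:g}. The proof splits into a weak inequality (easy, by direct calibration) and the no-gap equality (delicate, where the abstract machinery is really needed).

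For weak duality $\beta(\l)\ge\sup\eqref{dual-beta}$, I would take any $u\in BV(D;[0,1])$ and any admissible $\sigma$, and combine the pointwise Fenchel inequality
$$\sqrt{1+|\nabla u|^2}\ \ge\ \sigma^x(x,u)\cdot\nabla u + \sqrt{1-|\sigma^x(x,u)|^2}\ \ge\ \sigma^x(x,u)\cdot\nabla u - \sigma^t(x,u)$$
(the first coming from $|\sigma^x|\le 1$ and the Legendre transform of the area integrand, the second being exactly the dual constraint $\sigma^t+\sqrt{1-|\sigma^x|^2}\ge 0$) with a Gauss-Green computation for $\sigma$ on the subgraph $E_u=\aco{(x,t)\in Q\sepa 0<t<u(x)}$. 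Using $\Div\sigma=0$, the boundary flux through $\partial E_u$ decomposes into the bottom contribution $-\int_D\sigma^t(x,0)\,dx$, the top piece $\int_{\{u=1\}}\sigma^t(x,1)\,dx$ (the portion of $\partial E_u$ in $D\times\{1\}$), and a lateral piece on $\partial D\times(0,u^{tr})$ controlled by $\int_{\partial D}|u|\,d\HH^{N-1}$ since $|\sigma^x|\le 1$. The singular and Cantor parts of $Du$ are handled by the same inequality applied to the recession function $p\mapsto|p|$, and the jumps of $u$ translate into vertical surface pieces of $\partial E_u$ that are bounded similarly. Finally, the upper-face constraint $\sigma^t(x,1)\ge\l-1$ compensates exactly the non-convex penalty $-\l|\{u\ge 1\}|$ via the identity $\int_{\{u=1\}}(\sigma^t(x,1)+1-\l)\,dx\ge 0$.

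For the no-gap part, I would lift the problem to $Q$ by identifying $u\in BV(D;[0,1])$ with the characteristic function $v(x,t)=\one_{\{u>t\}}$ of its subgraph. By the classical formula for Cartesian graph areas, the surface-area term together with the boundary trace of $u$ reassembles into $\int_Q|Dv|+\int_{\partial D\times(0,1)}|v|\,d\HH^N$, while the non-convex term $-\l|\{u\ge 1\}|$ becomes the linear upper-face functional $(\l-1)\int_{D\times\{1\}}v\,d\HH^N$ modulo a constant. The lifted primal is now convex with linear growth in $Dv$, and Fenchel-Rockafellar duality in $Q$ produces \eqref{dual-beta}: the two pointwise constraints on $\sigma$ are exactly the polars of the area integrand and of the upper-face weight. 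The qualification condition holds trivially via the feasible field $\sigma\equiv(0,-1)$. The main obstacle is verifying that the infimum over subgraph-type $v$ agrees with the infimum over the full convex class of BV characteristic functions used in the relaxation; this bookkeeping is precisely the content of the machinery in \cite{bouphan2025}, whose conclusion yields the theorem once its polar constraints are identified with those of \eqref{dual-beta}.
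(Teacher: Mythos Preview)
Your approach is aligned with the paper's: the paper does not give a self-contained proof of this theorem but simply invokes the general duality of \cite{bouphan2025} and then, in the remark immediately following, identifies the integrand $f_\lambda(t,z)=\sqrt{1+|z|^2}+\lambda g(t)$, computes its partial conjugate $f_\lambda^*(t,\sigma^x)=\lambda g(t)-\sqrt{1-|\sigma^x|^2}$, and reads off the bulk constraint and the interface condition at $t=1$ coming from the jump of $g$. Your proposal does the same thing in spirit---both the weak-duality calibration argument and the subgraph lifting are precisely the mechanisms underlying the cited references---so you are effectively unpacking what the paper leaves packaged in the citation.

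One concrete slip: the field $\sigma\equiv(0,-1)$ that you offer for the qualification condition is \emph{not} admissible for \eqref{dual-beta} when $\lambda>0$, since the upper-face constraint $\sigma^t(x,1)\ge\lambda-1$ fails ($-1<\lambda-1$). An easy repair is to take the constant field $\sigma\equiv(0,c)$ with any $c\ge\max\{-1,\lambda-1\}$, e.g.\ $c=\lambda$, which is divergence-free and satisfies both pointwise constraints strictly. Also, in your weak-duality sketch, evaluating $\sigma^x(x,u(x))$ on the graph of a $BV$ function requires the Anzellotti trace/pairing machinery rather than pointwise values; this is exactly the kind of technicality that the paper (and you) rightly defer to \cite{bouphan2025}.
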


\begin{remark}
In order to explain how the results in \cite[Secton 3.3]{boufra2018} are applied to our case for deriving the dual problem \eqref{dual-beta}, two comments are in order:

\begin{enumerate}
\item [-]  Following the general notations of \cite{boufra2018}, the original primal problem \eqref{pbbeta}, can be written as
$$ \inf \left\{ f_\l(u, \nabla u) \, dx\ :\ u\in W^{1,1}_0(D) \right\} ,  $$
where $f_\l(t,z):= \sqrt{1+ |z|^2} + \l g(t)$ and $g$ is the non convex function in \eqref{def:g}.  
Then, the  bulk constraint  on $\sigma$ appearing in the dual problem (the condition (3.20) in \cite{boufra2018}  can be written as $f_\l^*(t, \sigma^x)\le \sigma^t$ where $f_\l^*(t,\cdot)$ is the Fenchel conjugate of $f_\l(t,\cdot)$. In our case, we have 
$$  f_\l^*(t, \sigma^x)= \l\, g(t)  - \sqrt{1- |\sigma^x|^2} \quad \text{if $|\sigma^x|\le 1$}\ , +\infty \quad \text{otherwise}.$$
where the non convex function $g$ vanishes on $[0,1)$. Hence the bulk condition holding a.e. in $Q$ reduces to $\sigma^t + \sqrt{1- |\sigma^x|^2} \ge 0$. 

\item[-] In view of the discontinuity of $g$ at $t=1$, an additional normal trace condition has to be imposed on the boundary interface $t=1$
namely $\sigma^t(x,1) \ge \inf f_\l^*(1, \cdot) =- f_\l(1,0)$ (see condition (3.21) in \cite{boufra2018}). In our case, we obtain 
the $\l$-dependent constraint $ \sigma^t(x,1) \ge \l-1$, which accounts the free boundary associated with the subset $\{u=1\} .$ 
Note that this condition holding a.e. $x\in D$ is well defined  in the sense of normal traces of bounded  functions $Q \to \R^d$ whose  distributional divergence in $\DD'(Q)$ belongs to $L^\infty(Q)$ (see \cite{Abis, Anzelloti1984}).
\end{enumerate}
\end{remark}

\subsection*{Optimality conditions for \texorpdfstring{$\beta(\lambda)$}{beta}}

Let $u \in BV(D; [0,1])$ and $\sigma$  be an admissible vector field $\sigma\in L^\infty(Q;\RR^N \times \RR)$ for  \eqref{dual-beta}. Then, by the no-gap identity $\beta(\lambda)= \sup \eqref{dual-beta}$ (see \cite[Thm 3]{boufra2018}) and recalling \eqref{El},  the optimality of an admissible pair $(u,\sigma)$ is equivalent to the equality 
 $$E_\l(u)=  -\int_D \sigma^t(x,0) \, dx  .$$
Accordingly, by localizing this relation, we obtain the following set of optimality conditions:

\begin{equation}\label{admi-beta}
\Div\sigma =0\, \tinm{in} \, \DD'(Q)\, ,\ \sigma^t + \sqrt{1- |\sigma^x|^2}\ge 0\ \text{in $Q$ a.e.}\ ,\ \sigma^t(x,1) \ge \lambda -1 \, \tinm{a.e. in $D$} ,
 \end{equation}
 
 and the pointwise conditions  on the completed graph of $u$:
%
%
\begin{equation}\label{opti-beta}
 \left\{
 \begin{array}{llll}
\sigma^x(x,u(x)) =&\; \frac{\nabla u(x)}{\sqrt{1+|\nabla u(x)|^2}} && \tinm{ $\LL^N$-a.e. on}\ \{u <1\} ;
\\
\sigma^t(x,u(x)) =&\; - \sqrt{1+|\nabla u(x)|^2} && \tinm{ $\LL^N$-a.e. on }\ \{u <1\} ;
\\
\sigma^t(x,1) =&\; \lambda -1 && \tinm{ $\LL^N$-a.e. on }\ \{u=1\}; 
\\
\sigma^x(x,t)\cdot\nu_u =&\; 1  && \tinm{ $\HH^{N-1}$-a.e. on $S_u$ \ ,\ $\forall t \in [u_-(x), u_+(x)]$};
\\
\sigma^x(x,\tilde u(x)) =& \; 1 && \tinm{ $|D^cu|$-a.e.   }
\end{array}
\right.
\end{equation}
where:
\begin{enumerate}
\item [-] $\nabla u(x)$ is the  a.e defined approximate gradient of $u$; 
\item [-] $u_-(x), u_+(x)$ denote the lower and upper approximative limits of $u$ ; if $u_-(x)= u_+(x)$ we denote by $\tilde u(x)$ the common value;
\item[-] $S_u = \{u_+>u_-\} $ stands for the \emph{$N-1$-rectifiable} jump set of $u$; 
\item[-] $\nu_u$ the oriented unit normal vector to $S_u$ ;
\item [-] $D^c u$ is the Cantor part of the vector measure $Du$ (which has no mass on $S_u$). 
\end{enumerate}

\begin{remark}\label{background} Here a short  background is in order. 
We recall that $BV(D)$ is the set of functions $u$ in $L^1(D)$ whose distributional gradient $Du$ is an element of $\mathcal {M}(D;\R^N)$ the set of  Radon vector measures from $D$ to $\R^N$. By considering only elements  $u\in BV(D)$ ranging into the closed interval $[0,1]$,
 we obtain a closed subspace denoted $BV(D;[,1])$.
For every function $u\in BV(D)$,  $Du$ is a bounded Radon measure which can be decomposed into 
\begin{align}\label{decompo}
Du = \nabla u dx + D^c u + (u_+ - u_-) \nu_u d(\HH^{N-1}\res S_u)
\end{align}
where $\nu_u$ denotes the Radon-Nikod\'ym density of   $Du$ with respect to its total variation $|Du|$, i.e.~$\nu_u:= dDu/d|Du|$. Note that $D^cu$ is the Cantor part of the measure $Du$. The quantity $[u]:= u^+ - u^-$ is called the {\it jump} of $u$ across the interface $S_u$ and the direction of the jump is given by $\nu_u$ along $S_u$. Accordingly, the {\it complete graph} of function $u$, denoted by $\ov G_u$, is  defined by 
\begin{align*}
\ov G_u:= \bigcup_{x\in\Omega} \Big( \{x\}\times[u_-(x), u_+(x)] \Big).
\end{align*}
For futher details , we refer to the monograph \cite{mua1992bv}.
\end{remark}

It is an $N$-rectifiable subset of $\Omega\times\RR$ with an oriented unit normal denoted by $\widehat \nu_u$. This oriented  normal $\widehat\nu_u$ is $\HH^N\res\ov G_u$ a.e. determined by 
\begin{align}\label{hatnus}
\widehat\nu_u (x,t) = (\nu_u(x),0) \quad \tinm{for} x\in S_u \tinm{and} t\in [u_-(x), u_+(x)],
\end{align}
on the vertical part of $\ov G_u$, whereas, on the {\it approximately continuous part}  $G_u:=\{ (x, \tilde u(x))\}$, it is identified as
\begin{align}
\widehat\nu_u (x,\tilde u(x)) = \frac{(\nabla u(x), -1)}{\sqrt{1+ |\nabla u(x)|^2}}\label{hatnur}
\end{align}
if $u$ is approximately differentiable at $x$ (with its approximate gradient $\nabla u(x)$), and it is horizontal, i.e. $\widehat\nu_u (x,\tilde u(x))= (\nu_u(x),0)$, at points in the support of the Cantor part of $Du$. Notice that $\nu_u= dD^c u/ d|D^c u|$ holds $|D^c u|$-a.e in $D$. We remark also that the complete graph $\ov G_u$ of functions $u$ belonging to $W^{1,1}(D)$ agrees with the {\it continuous graph} $G_u$ , on which $\widehat \nu_u(x,u(x))$ is given by \eqref{hatnur}.
Therefore, the normal trace of the calibration field $\sigma$ given in \eqref{opti-beta} satisfies the equality  $\sigma \cdot \widehat \nu_u =1$
 on $G_u$ ($\HH^N$-a.e.).

\subsection{Dual problem of \texorpdfstring{$\beta_0(\lambda)$}{beta0}}

By applying the same duality framework as for $\beta(\l)$, we obtain the relation:
\begin{align*}
\beta_0(\lambda) = \sup \aco{\int_D-\sigma^t(x,0)dx \sepa \Div\sigma =0,\; |\sigma^x| \le 1,\; \sigma^t +1  \ge 0 \tinm{in} Q,\; \sigma^t(x,1) \ge \lambda -1 \tinm{on} D} .
\end{align*}
We recall that $m(\l,D)= \beta_0(\l) - |D| ,$ where $m(\l,D)$ is the minimum of the  shape optimization problem \eqref{mlD}.
In order to simplify further computations, it is convenient to rewrite the duality relation above, after changing $\sigma^t$ into $\sigma^t +1$, as follows:
\begin{align}\label{dual-beta0}
m(\l, D) = \sup \aco{\int_D-\sigma^t(x,0)dx \sepa \Div\sigma =0,\; |\sigma^x| \le 1,\; \sigma^t  \ge 0 \tinm{in} Q,\; \sigma^t(x,1) \ge \lambda  \tinm{on} D} .
\end{align}
Since the pointwise constraints over $\sigma^t$ and $\sigma^x$ are decoupled, we can easily construct particular admissible calibrations $\sigma$. An important subclass is associated to the subset 
$$S_\l :=  \{ q\in L^\infty(D;\R^N)\ :\ |q| \le 1 ,\  0\le \Div q \le \l \}.$$
It is easy o check that $S_\l$ is convex and wealy* closed in $L^\infty(D;\R^d)$. Moreover, to each element $q\in S_\l$, we associate a competitor for \eqref{dual-beta0}, namely:
$$ \sigma_q (x,t) :=\  \left(-q(x), \l +  (t-1)\, \Div q(x)\right) .$$
It follows that:
$$ m(\l, D)  \ \ge\ \sup \left\{ \int_D ( \Div q- \l )\, dx \ :\  q \in S_\l \right\}.$$  
In turn, we are going to show that the latter inequality is an equality.

\subsection*{Reduction to dimension $N$ and optimality conditions for $\beta_0(\l)$ }

By the exclusion principle mentioned in the introduction, optimal solutions $u$ for \eqref{beta0} are of the form
 $\one_\Omega$ for some set  $\Omega\subset D$. This set $\Omega$ has finite perimeter and solves the geometrical problem $m(\l, D)$.

\begin{theorem}\label{duality2}
Let $D$ be a bounded domain of $\R^N$ with Lipschitz boundary. Then 

\begin{itemize}
\item [(i)]\  we have the following duality relation
\begin{align}
 m(\lambda,D)=\sup \aco{\int_D (\Div q -\lambda) dx \sepa q\in L^\infty(D;\RR^N),\; |q|\le 1,\; 0 \le \Div q \le \lambda } ,  \label{dual2}
\end{align}
where  the supremum in the right hand side is attained. 

\item [(ii)]\ a pair $(\O,\ov q)$ solve \eqref{mlD} and \eqref{dual2} respectively if and only if the following conditions are satisfied
\begin{align}
& |\ov q| \le 1 \quad\tinm{a.e. in} D, && 0 \le \Div \ov  q \le \lambda \quad\tinm{a.e. in} D,\label{opt1}\\
&  \ov q\cdot \nu_{\Omega} =1 \quad\tinm{$\HH^{N-1}$-a.e. on} \partial\Omega,  && \Div \ov  q =\lambda \quad\tinm{a.e. in} D\setminus \Omega.\label{opt2}
\end{align}
\end{itemize}
\end{theorem}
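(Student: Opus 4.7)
The plan is built around a single chain of inequalities that yields (i) as weak duality and (ii) by inspection of the equality case. For any $\Omega \subset D$ of finite perimeter and any $q$ satisfying \eqref{opt1}, Anzellotti's pairing (applicable since $q \in L^\infty(D;\RR^N)$ has $\Div q \in L^\infty(D)$, so the normal trace $q\cdot\nu_\Omega$ is well-defined on $\partial^*\Omega$ and bounded by $\|q\|_\infty \le 1$) gives
\begin{align*}
P(\Omega) - \lambda|\Omega|
&\ge \int_{\partial^*\Omega} q\cdot\nu_\Omega \, d\HH^{N-1} - \lambda|\Omega|
= \int_\Omega \Div q\, dx - \lambda|\Omega| \\
&= \int_D \Div q\, dx - \int_{D\setminus\Omega} \Div q\, dx - \lambda|\Omega|
\ge \int_D (\Div q - \lambda)\, dx,
\end{align*}
where the last step uses $\Div q \le \lambda$ on $D \setminus \Omega$. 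Taking $\inf_\Omega$ on the left and $\sup_q$ on the right yields $m(\lambda,D) \ge \sup$ of \eqref{dual2}.

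For the reverse inequality and attainment in (i) --- which I expect to be the main obstacle --- I would proceed in two steps. Existence of a maximizer $\bar q$ follows from the direct method: $S_\lambda$ is convex, bounded in $L^\infty(D;\RR^N)$, and weakly-$\ast$ closed, since weak-$\ast$ convergence preserves the pointwise constraint $|q|\le 1$ and propagates the $L^\infty$ bounds on $\Div q$; the objective $q \mapsto \int_D \Div q\, dx$ is weak-$\ast$ continuous. For the no-gap identity I would specialize Theorem \ref{duality1} to $\beta_0$: in the $(N{+}1)$-dimensional formulation \eqref{dual-beta0} the constraints on $\sigma^x$ and $\sigma^t$ are decoupled, and an optimal $\sigma$ may be taken in the form $\sigma_q(x,t) = (-q(x),\lambda + (t-1)\Div q(x))$ introduced just before the theorem, which immediately identifies an element of $S_\lambda$ with the same value. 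An equivalent route is Fenchel--Rockafellar duality applied to the convex relaxation
\[
\inf\Big\{ \int_D |Du| + \int_{\partial D}|u|\, d\HH^{N-1} - \lambda\int_D u\, dx \ :\ u \in BV(D;[0,1]) \Big\},
\]
whose value equals $m(\lambda,D)$ by the coarea formula combined with the exclusion principle of \cite{bouphan2025}, and whose dual constraints match exactly those defining $S_\lambda$.

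For (ii), given (i) the proof is pure equality-case inspection. If $(\Omega,\bar q)$ is jointly optimal, then both middle inequalities in the chain above must be saturated. The first forces $\int_{\partial^*\Omega}(1 - \bar q\cdot\nu_\Omega)\, d\HH^{N-1} = 0$, which together with $\bar q\cdot\nu_\Omega \le 1$ gives $\bar q\cdot\nu_\Omega = 1$ $\HH^{N-1}$-a.e.\ on $\partial\Omega$. The second forces $\int_{D\setminus\Omega}(\lambda - \Div\bar q)\, dx = 0$, which together with $\Div\bar q \le \lambda$ gives $\Div\bar q = \lambda$ a.e.\ in $D\setminus\Omega$. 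Conversely, if \eqref{opt1}--\eqref{opt2} hold then every step of the chain is an equality, so $P(\Omega) - \lambda|\Omega| = \int_D(\Div\bar q - \lambda)\, dx$, and by (i) this common value must be $m(\lambda,D)$, proving simultaneously that $\Omega$ solves \eqref{mlD} and $\bar q$ solves \eqref{dual2}.
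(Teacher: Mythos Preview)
Your proof of part (ii) and your weak-duality chain are essentially what the paper does: the paper also rewrites the extremality relation $P(\Omega)-\lambda|\Omega|=\int_D(\Div\bar q-\lambda)\,dx$ as the vanishing of two nonnegative terms $\big(P(\Omega)-\int_\Omega\Div\bar q\big)+\int_{D\setminus\Omega}(\lambda-\Div\bar q)=0$, and reads off \eqref{opt2} exactly as you do. Your direct chain via Anzellotti's pairing is in fact a bit cleaner for the inequality $m(\lambda,D)\ge\sup\eqref{dual2}$ than the paper's detour through the $(N{+}1)$-dimensional formulation \eqref{dual-beta0}.

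For the no-gap direction of (i), however, your Route~1 has a real gap. The claim ``an optimal $\sigma$ may be taken in the form $\sigma_q$'' is not justified: the pointwise constraints on $\sigma^x$ and $\sigma^t$ are indeed decoupled, but the divergence-free condition $\Div\sigma=0$ couples them, so one cannot argue by independent optimization. The natural candidate $q(x)=-\int_0^1\sigma^x(x,t)\,dt$ satisfies $|q|\le 1$ and $\Div q=\sigma^t(\cdot,1)-\sigma^t(\cdot,0)$, but the constraints $\sigma^t(\cdot,1)\ge\lambda$ and $\sigma^t(\cdot,0)\ge 0$ do not force $0\le\Div q\le\lambda$. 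So Route~1 as written does not close the gap.

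Your Route~2 is exactly the paper's approach: the paper passes to the convex relaxation $\inf\{\int_D(|\nabla v|+\lambda g^{**}(v))\,dx: v\in W^{1,1}_0(D)\}$, shows by truncation and coarea that it equals $m(\lambda,D)$, then applies classical Fenchel duality via the perturbation $h(p)=\inf_v\int_D(|p+\nabla v|+\lambda g^{**}(v))\,dx$, computing $h^*(-q)=\int_D(\lambda-\Div q)\,dx$ on $S_\lambda$ and $+\infty$ off it. Since $h$ is convex continuous on $L^1$, the minimum of $h^*$ is attained, giving both the equality and the existence of a maximizer. So your sketch is correct provided you take Route~2; you should drop Route~1 or supply the missing reduction.
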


\begin{remark}\label{trace} \ In the left hand side of \eqref{opt2}, $\nu_\O$ stands for the exterior normal vector if $\O$ has a smooth boundary;
if $\O$ is merely a subset with finite perimeter, the reader should agree that the equality means that the measure associated with the duality bracket
$< \ov q,  D\one_\O>$ in the sense of G.Anzelloti \cite{Anzelloti1984} coincides with  $\HH^{N-1}\res \partial_* \Omega$ being $\partial_* \Omega$ the reduced boundary of $\Omega$. Note that $\nu_\O$ can be defined  $\HH^{N-1}$ on  $ \partial \O$ (see \cite{EG}).

\end{remark}

\begin{proof} First we notice that, with  $g^{**}$ defined by \eqref{co-g}, we have the equality
$$  m(\l,D) = \inf \left\{ \int_D (|\nabla v| + \lambda \, g^{**}(v))\, dx\ :\ v\in W^{1,1}_0(D)\right\}.$$
Indeed, since $g^{**}(t) \ge g^{**}( \min\{t_+,1\})$, by truncating the competitors, the infimum above is unchanged if we restrict to $v$ such that $0\le v\le 1$. For such a $v$, we have $g^{**}(v)=-\l\, v$ so that, by the coarea formula:
$$\int_D (|\nabla v| + \lambda \, g^{**}(v))\, dx = \int_0^1 \left(P(v>t) - \l |v>t|\right)\, dt \ge  m(\l, D) .$$
Next we apply classical convex duality arguments. The perturbation function
$$ h: p\in L^1(D,\R^N) \mapsto   \inf \left\{ \int_D (|p+ \nabla v| + \lambda \, g^{**}(v))\, dx\ :\ v\in W^{1,1}_0(D)\right\} \, ,$$
is convex and continuous. Indeed $h(p) \le m(\l,D) + \int_D |p|\, dx$. Therefore it holds
$$   h(0) = - \min \{h^*(q) \ :\ q\in L^\infty(D;\R^N)\} ,$$
being $h^*$ the Fenchel conjugate of $h$ in the duality between $L^1(D,\R^N)$ and $L^\infty(D,\R^N)$. Let us compute
\begin{align*} h^*(q)  &=  \sup \aco{\int_D p\cdot q \, dx - h(p) : p\in L^1(D;\R^N)} \\
&=  \sup \aco{\int_D \left(p\cdot q  - |p+\nabla v| - \l g^{**}(v) \right)\, dx : (v, p)\in W^{1,1}_0(D)\times L^1(D;\R^N)}\\
 &=\sup \aco{\int_D \left(\tilde p\cdot q  - |\tilde p|\right)  +   \int_D \left(v \,  \Div q - \l g^{**}(v)\right)  \, dx  : (v,\tilde p)\in W^{1,1}_0(D)\times L^1(D;\R^N)} \\
 &= \chi_{|q|\le 1} + \int_D  \l\,  g^*\left(\frac{\Div q}{\l}\right)  \, dx .
 \end{align*}
 Here above:
 \begin{enumerate}
\item [-] to pass from the second line to the third, we wrote $p\cdot q  - |p+\nabla v|$ as $ \tilde p\cdot q  - |\tilde p| - \nabla v \cdot q$ where 
$\tilde p = p+\nabla v$ and then  we used the integration by parts  $-\int \nabla v \cdot q= \int v \,  \Div q$ ;  
\item [-]to pass to the last line, we decoupled the supremum in $\tilde p$ from that  in $v$ and compute them as Fenchel conjugates of integral functionals, 
taking into account that the Fenchel conjugate of the norm is the indicator funtion of the unit ball while the conjugate of  $\l g^{**}$ is
 $\l\, g^*(\frac{\cdot}{\l})$.  
\end{enumerate}
A straightforward computation shows that 
$$\l\, g^*\left(\frac{t^*}{\l}\right)= \begin{cases}  t^* +\l  & \text{ if $-t^* \in [0,\l]$} \\ +\infty & \text{otherwise} \end{cases} $$
It is convenient now to change $q$ into $-q$. We arrive to the simple expression:
$$ h^*(-q) =  \int_D (\l -\Div q)\ dx \quad \text{if $0\le \Div q\le \l$ \ a.e.}  \quad,\quad  h^*(-q) =+\infty \quad \text{otherwise}.$$
Since $h(0)= m(\l,D)$, the equality $h(0)= - \min h^*$ leads to the equality stated in \eqref{dual2} where the supremum is actually a maximum. 
The assertion (i) is proved.

\medskip
Let us now establish the assertion (ii). Let  $(\one_\O, q)$ be an admissible pair. This means that $\O$ has finite perimeter and that $q$ satisfies \eqref{opt1}.
In particular, the condition $|\ov q| \le 1$ implies that 
$$ P(\O) - \int_\O \Div \ov q\, dx=  \int_{\partial \O} (1- \ov q\cdot \nu_\O)\, d\HH^{N-1}  \ge 0 \ ,$$
while the equality holds if and only if  $\ov q\cdot \nu_\O =1$ holds $\HH^{N-1}$-a.e in $\partial \O$. 

Now, in virtue of \eqref{dual2}, the optimality of $(\one_\O, \ov q)$ is equivalent to the extremality relation:  
$$   P(\O) - \l \, |\O| \ =\  \int_D (\Div \ov q - \l)\, dx ,$$
which we can rewrite as the equality
$$ \left (P(\O) - \int_\O \Div \ov q \, dx\right) \ + \ \left(\int_{D\setminus\O} (\l - \Div \ov q)\, dx\right) \ =\ 0  .  $$
Since above we get the sum of two non negative terms,  
the equivalence with \eqref{opt2} follows. 

\end{proof}

In Section \ref{scutlocus},  an explicit construction of an optimal field $\ov q$ will be given for $D$ being any convex body in dimension 2 and for any $\l <h_D$.

\subsection{Relations with Cheeger problem and calibrability notions}\label{calibrable}

We recall that the Cheeger constant of a bounded domain $D\subset \R^N$ is given by
\begin{align}
h_D:=\inf \left\{  \frac{P(\Omega)}{|\Omega|}\ :\ \Omega\subset D, |\Omega| > 0 \right\} . \label{CheegerConst}
\end{align}
It is well known that the infimum here is actually a minimum and 
any optimal $\Omega$ for  this geometric problem is called a Cheeger set of $D$. In case $D$ is convex, it turns out that this set is unique (see for instance \cite{caselles2007uniqueness}, \cite{leonardi2015overview}, \cite{parini2011introduction}) and we shall denote it by $C_D$. For a non convex $D$, the uniqueness  fails; however, since a union of Cheeger sets is still a Cheeger set, the definition of $C_D$ can be extended 
 (see \cite{leonardi2015overview}, \cite{buttazzo2007selection})  by setting 
\begin{align*}
C_D:=\bigcup \;\{ \Omega \sepa \Omega \tinm{is a Cheeger set of} D\}.
\end{align*}
In this case, $C_D$ is called the \emph{ maximal Cheeger set}.
If $D$ has finite perimeter, we will use in many places an upperbound for $h_D$ , namely the ratio 
\begin{equation} \label{def:lD}
\l_D:= \frac{P(D)}{|D|}\ .
\end{equation}
 It satifies the  inequality $ \l_D \ge h_D$ while the equality holds
 if and only if $D$ is a Cheeger set of him self;  for brevity,  we will say that $D$ is \emph{self-Cheeger}. 
 
 \medskip
 Lets us now  come back to the parametrized geometric optimization problem \eqref{mlD}. It consists, for every $\l\ge 0$, in minimizing the shape functional
 $$ J_\l (\O)\ :=\ P(\O) - \l |\O| \ ,\  \text{ $\O$  measurable subset of $D$}. $$
 Here we set  $P(\O)=+\infty$ if $\O$ is not of finite perimeter and by convention $P(\O)=0$ if $|\O|=0$. In particular, we have 
 $  m(\l,D) = \inf J_\l \le 0$  for every $\l\ge 0$. 
 
 The existence of  minimizers for $J_\l$ follows directly from the direct method of Calculus of Variations. As it appears in the proof of Theorem \ref{duality2}, 
 $m(\l,D)$ is  also the mimimum of the following companion convex problem:
 $$\widetilde m(\l,D) := \min \left\{ F_\l(u):= \int_{\R^N} |Du| - \l \int_D u\ dx  \ :\ u\in BV(\R^N;[0,1]), u=0\ \text{a.e. on\ } D^c\right\} \ .$$
 The equality  $\widetilde m(\l,D)= m(\l,D)$ is a consequence of the corea formula  $ F_\l(u) =\int_0^1 J_\l(\{u>t\})\,dt \ge m(\l,D) $ from which it follows that:
   $$   u \ \text{solves}\  \widetilde m(\l,D) \iff  \{u>t\}\ \text{solves}\  m(\l,D) \ \text{for a.e. $t\in (0,1)$}.$$
  
  The main  properties of the function $m(\l,D)$ are  summarized below:

\begin{proposition} \label{monotony}  Let $D\subset \R^N$ be a bounded set with finite perimeter. Then
 \begin{itemize} 
\item [(i)]\ The function $m(\cdot,D)$ is concave  continuous non increasing on $\R_+$ and satisfies
\begin{equation}\label{ineqmlD}
- (\l- h_D)_+ \, |D|  \ \le m(\l,D) \ \le \min\{0,P(D)-\l |D|\} .
\end{equation}
   Therefore  it holds   $m(\l,D) =0$ for every  $\l\in [0, h_D]$ whereas:
\begin{equation}\label{mlD=0} 
  m(\l,D) <0 \  \text{ for $\l >h_D$} \quad \text{and} \quad \lim_{\l\to +\infty} m(\l,D) + \l |D|=P(D) .
\end{equation}

\item [(ii)]\ Assume that $D$ is a minimizer of $m(\l^*,D)$ for a suitable value $\l^*>0$. Then $\l^*\ge h_D$ and $D$ is the unique minimizer of $m(\l,D)$ for any $\l>\l^*$.
In particular the concave function $m(\cdot,D)$ has the linear behavior $m(\l,D)= m(\l^*,D) - (\l-\l^*)\, |D|$ for every  $\l\ge \l^*$.

\med
\item [(iii)]\  Let $\O\subset D$. Then we have $ m(\l,D)\le m(\l,\O)$ while 
$ m(\l,D)= m(\l,\O)$ if $\O$ is a minimizer of $m(\l,D)$. 
If such is the case with a set $\O$ of positive measure, then the ratio  $\l_\O= \frac{P(\O)}{|\O|}$ satisfies  $\lambda_\Omega < \lambda$.

\end{itemize}
\end{proposition}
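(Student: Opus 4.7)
My starting point is that $\lambda\mapsto m(\lambda,D)$ is the pointwise infimum over $\Omega\subset D$ of the affine family $\lambda\mapsto P(\Omega)-\lambda|\Omega|$, which immediately yields concavity and monotonicity; continuity on $[0,\infty)$ then follows from standard properties of finite concave functions. The upper bound $m(\lambda,D)\le\min\{0,P(D)-\lambda|D|\}$ comes from testing with $\Omega=\emptyset$ and $\Omega=D$. For the lower bound, the Cheeger inequality $P(\Omega)\ge h_D|\Omega|$ combined with $|\Omega|\le|D|$ gives $P(\Omega)-\lambda|\Omega|\ge(h_D-\lambda)|\Omega|\ge-(\lambda-h_D)_+|D|$, so that in particular $m(\lambda,D)=0$ on $[0,h_D]$. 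For $\lambda>h_D$, testing with a Cheeger set $C$ produces $P(C)-\lambda|C|=(h_D-\lambda)|C|<0$, which gives the strict negativity. For the asymptotic $m(\lambda,D)+\lambda|D|\to P(D)$, I would pick minimizers $\Omega_\lambda$ (existence via the direct method and lower semicontinuity of $P$) and compare with $\Omega=D$ to extract $P(\Omega_\lambda)\le P(D)$ and $|D|-|\Omega_\lambda|\le P(D)/\lambda\to 0$; the $L^1$ convergence $\one_{\Omega_\lambda}\to\one_D$ together with lower semicontinuity of perimeter closes the argument.

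\textbf{Plan for (ii).} From $P(D)-\lambda^*|D|=m(\lambda^*,D)\le 0$ I immediately deduce $\lambda^*\ge P(D)/|D|\ge h_D$. The central observation is the inequality $m(\lambda,D)\ge m(\lambda^*,D)-(\lambda-\lambda^*)|D|$ for $\lambda\ge\lambda^*$, which follows by writing $P(\Omega)-\lambda|\Omega|\ge P(\Omega)-\lambda^*|\Omega|-(\lambda-\lambda^*)|D|$ (using $|\Omega|\le|D|$) and taking the infimum over $\Omega$. The reverse inequality is automatic from testing with $D$, so $m(\cdot,D)$ is linear with slope $-|D|$ on $[\lambda^*,\infty)$ and $D$ remains optimal for every $\lambda\ge\lambda^*$. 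For uniqueness at $\lambda>\lambda^*$, given another minimizer $\Omega$ I combine $P(\Omega)-\lambda|\Omega|=P(D)-\lambda|D|$ with the inequality $P(D)-\lambda^*|D|\le P(\Omega)-\lambda^*|\Omega|$ coming from the optimality of $D$ at $\lambda^*$; subtracting yields $(\lambda-\lambda^*)(|D|-|\Omega|)\le 0$, which forces $|\Omega|=|D|$ since $\lambda>\lambda^*$, and hence $\Omega=D$ modulo negligible sets.

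\textbf{Plan for (iii).} The inequality $m(\lambda,D)\le m(\lambda,\Omega)$ is immediate from the inclusion of the competitor classes. When $\Omega$ is a minimizer of $m(\lambda,D)$, it is itself a competitor for $m(\lambda,\Omega)$ realizing the value $m(\lambda,D)$, and equality follows. For the ratio inequality, the previous equality shows that $\Omega$ minimizes $m(\lambda,\Omega)$ as well, so part (ii) applied with $D$ replaced by $\Omega$ and $\lambda^*=\lambda$ delivers $\lambda\ge\lambda_\Omega$. The expected obstacle is the strict inequality: if $\lambda=\lambda_\Omega$ were to hold, then $m(\lambda,D)=P(\Omega)-\lambda|\Omega|=0$, whence $\lambda\le h_D$ by part (i); coupling this with $\lambda_\Omega\ge h_D$ pins down $\lambda=h_D=\lambda_\Omega$ and $\Omega$ a Cheeger set. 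The strict statement $\lambda_\Omega<\lambda$ therefore holds precisely in the non-degenerate regime $\lambda>h_D$, which is implicit in the claim.
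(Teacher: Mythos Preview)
Your proposal is correct and follows essentially the same route as the paper: concavity from the affine-infimum structure, the Cheeger inequality for the lower bound, minimizers plus lower semicontinuity of perimeter for the asymptotic, and the decomposition $J_\lambda(\Omega)=J_{\lambda^*}(\Omega)+(\lambda^*-\lambda)|\Omega|$ for (ii). One small remark on (iii): the paper derives $\lambda_\Omega<\lambda$ in one line by noting that when $\lambda>h_D$ the optimal value $P(\Omega)-\lambda|\Omega|=m(\lambda,D)$ is strictly negative, hence $\lambda_\Omega<\lambda$ directly; your detour through (ii) is fine but unnecessary, and you are right that the strict inequality only holds under the implicit hypothesis $\lambda>h_D$ (at $\lambda=h_D$ a Cheeger set $\Omega$ gives $\lambda_\Omega=\lambda$), which the paper's proof also silently assumes.
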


\begin{proof} In the assertion (i), the fact that $m(\cdot,D)$ is monotone is obvious, while the concavity follows from writing $m(\cdot,D)$ as the infimum of the family of affine functions $\l\to P(\O)-\l |\O|$,  when $\O$ runs over subsets $\O\subset D$. The relations \eqref{ineqmlD} are obtained by noticing that $P(\O) \ge h_\O |\O|$. Thus,
since $|\O|\le |D|$, the shape functional $J_\l$ to be minimized satisfies
\begin{equation}\label{ineqJ*}
- (\l- h_D)_+ \, |D| \le \, (h_D-\l) \, |\O|\ \le  J_\l(\O)\ \, \le m(\l,D) .
\end{equation}
  Then, by taking the infimum in $\O$ (and  recalling that $m(\l,D) \le 0$), we get \eqref{ineqmlD} which proves that $m(\cdot,D)$ is finite, hence continuous by the concavity property.
If $\l\le h_D$, the first inequality in \eqref{ineqmlD} implies that $m(\l,D) \ge 0$, hence   $m(\l,D)= 0$ since we know that  $m(\cdot,D)\le 0$. 
If $\l> h_D$, we obtain directly that $m(\l,D)<0$ since $J_\l(\O)= |\O| (h_D- \l) <0$ holds whenever $\O$ is a  Cheeger set of $D$. 
Moreover, the second inequality in \eqref{ineqmlD} implies that
 \begin{equation}\label{upper}
\limsup_{\l\to \infty}\,  m(\l,D) + \l |D|   \le P(D).
\end{equation}
In the opposite direction, let $(\l_n)$ be any sequence such that $\l_n\to +\infty$ and choose a minimizer $\O_n$ for $m(\l_n,D)$.
Then, by the upperbound inequality \eqref{upper}, we have
$$ \limsup_{n\to\infty} \left(P(\O_n)  + \l_n |D\setminus \O_n|\right) = \limsup_{n\to\infty} \left(m(\l_n,D)  + \l_n |D|\right) \le P(D) <+\infty . $$
It follows that $\one_{\O_n} \to 1$ in $L^1(D)$ while $\limsup_{n\to\infty} P(\O_n)\le P(D)$.  By the lower semicontinuity of the perimeter, we infer that
 $ P(\O_n) \to P(D)$  so that  $ \l_n |D\setminus \O_n| \to 0$. Finally, we have proved that
 $m(\l_n,D)  + \l_n |D|  \to P(D)$ as wished.
 The proof of assertion  (i) is finished.

\medskip
Lets us turn  to the assertion (ii).  If  $D$ solves $m(\lambda^*,D)$, then $\l^*\ge h_D$ since otherwise, by the assertion (i), we would have
$m(\l^*,D)= J_{\l^*}(D) =0$ which is uncompatible with the inequality $J_{\l^*}(D) \ge (h_D-\l^*) |D| $.  
Next we observe that, if $\l\ge \l^*$, then for every $\O\subset D$
\begin{align*}
J_\l(\O) \ =\ J_{ \l^*} (\O) + (\l^*-\l)|\O|
\ \ge\  J_{ \l^*} (D) + (\l^*-\l)|D|\  =\ J_\l(D) .
\end{align*}
It follows that $D$ is  optimal also for $m(\l,D)$ and that  $m(\l,D)= m(\l^*,D) - (\l-\l^*)\, |D|$.
Futhermore, in view of the inequalities above,  the optimality of a competitor  $\O$ for $m(\l,D)$  requires that $(\l^*-\l) |\O| = (\l^*-\l) |D|$.
If we assume that $\l>\l^*$, this is possible only if $|\O| = |D|$ which means that $\O=D$. Hence $D$ is the unique solution to $m(\l,D)$.


\medskip
Let us prove now the assertion (iii). If $\O\subset D$, the inequality $ m(\l,D)\le m(\l,\O)$  is clear since any admissible subset for $m(\l,\O)$
is also a competitor for $m(\l,D)$. In particular, if $\O$ itself is optimal for $m(\l,D)$, then we have $m(\l,D)=m(\l,\O)= J_\l(\O)$. 
If moreover $\l> h_D$, then $|\O |>0$ and $J_\l(\O)=m(\l,D) <0$. Therefore $J_\l(\O)= |\O| (\l_\O -\l) <0$, whence $\lambda_\Omega < \lambda$ as claimed.
\end{proof}

%
%
%

Owing to Proposition \ref{monotony}, we know that, for $\l<h_D$, the unique solution (in the sense a.e.) of \eqref{mlD} is the empty set , while when $\l=h_D$, we need to add any Cheeger subset of $D$.
For  $\l> h_D$, any solution $\O$ has a positive measure.  Then the optimality conditions obtained in the assertion (ii) of Theorem  \ref{duality2} 
can be exploited  to extend to any $\l>\l_D$ the  notion of  calibrability as it was introduced in \cite{alter2005characterization} for the specific case $\l=\l_D$ .

\begin{definition}\label{def:tcalib}
Let $\theta \ge 1$ and $\Omega \subset \RR^N$ be a bounded set of finite perimeter. $\Omega$ is called $\theta$-{\it calibrable} if there exists a vector field $q\in L^\infty(\Omega;\RR^N)$ such that 
\begin{align*}
\lVert q \rVert_\infty \le 1,\qquad q\cdot\nu_\Omega = 1 \;\tinm{$\HH^{N-1}$-a.e. on}\; \partial\Omega, \qquad 0\le \Div q \le \theta\lambda_\Omega  \;\tinm{in}\; \DD'(\Omega).
\end{align*} 
\end{definition}

\index{t@$\theta$-calibrability}
Obviously, if $\Omega$ is $\theta$-calibrable then it is also $\gamma$-calibrable for every $\gamma\ge \theta$. Accordingly, we define the {\it calibration constant} of a subset $\Omega\subset\RR^N$ as  \index{calibration constant}
\begin{align}
\theta_\Omega := \inf\aco{ \theta \sepa \theta \ge 1,\; \Omega \tinm{is} \theta\text{-calibrable} } , \label{theta_O}
\end{align} 
with the convention that $\theta_\O =+\infty$ if the subset above is void.

\begin{remark}\label{theta-min-1}  If $\theta_\O<+\infty$, the infimum in \eqref{theta_O} is actually a minimum. Indeed, given a sequence such that $\theta_n \searrow \theta_\O$, we can associate $q_n$ in the unit ball of $L^\infty(\O;\R^N)$ such that $q_n\cdot\nu_\Omega = 1$ and $ 0\le \Div q_n \le \lambda_n$.
Up to a subsequence, we have $(q_n,\Div q_n) \sto (q,\Div q)$ for a suitable $q$ such that $|q|\le 1$ and  $ 0\le \Div q \le \theta_\O$, while
$q\cdot\nu_\Omega = 1$ on $\partial D$ by the weak* convergence of the normal traces $q_n\cdot\nu_\Omega \sto q_n\cdot\nu_\Omega$ in $L^\infty(\partial\O)$
(see \cite{Anzelloti1984}).

\end{remark}

\begin{remark}\label{1calib}
In Definition \ref{def:tcalib},  the condition $q\cdot\nu_\Omega = 1$ $\HH^{N-1}$-a.e. on $\partial\Omega $ can be understood as $q \cdot D \one_\Omega = -|D \one_\Omega|$ in the sense of measure in $\ov\Omega$. By integrating by parts, we infer that:
\begin{equation}\label{theta>1}
 P(\O)\ =\ \int_{\partial\O} -  q \cdot D \one_\Omega\ =\ \int_\O \Div q\ \le\ \theta \l_\O \, |\Omega| \ =\ \theta P(\O) .  
\end{equation}
As a consequence,  the condition $\theta\ge 1$ is necessary if we wish  the notion above to be non empty.
Moreover, if $\theta=1$, then the equality \eqref{theta>1} implies that $\int ( \l_D - \Div q)=0$, hence $\div q=\l_D$ a.e. in $D$.
In this case we recover, after changing $q$ in $-q$,  something very similar to the definition of calibrability  in \cite[Definition 1]{alter2005characterization}, except that we do not need to impose the condition $\div q = \l_\O \one_\O$ on all $\R^N$. Therefore, our  \emph{$1$-calibrability} condition is weaker than the one
proposed in  \cite{alter2005characterization} (at least when $\O$ is not convex) .

\end{remark}

\begin{remark}\label{calib-Cheeger}  It is important to keep in mind that the $1$-calibrability property characterizes sets $\O$ which are self-Cheeger. In other words:
\begin{equation}\label{1calib=Cheeger}
\theta_\O = 1 \ \iff\ \l_\O=h_\O \ \iff \ \text{$\O$ is self-Cheeger}.
\end{equation}
  Indeed, if $\theta_\O=1$, as noticed in Remark \ref{1calib}, it exists $q\in L^\infty(\O;\R^N)$ such that $|q|\le 1$ and $\Div q= \l_D$.
Then, for every $A\subset \O$ with positive measure, we have:
$$ P(A) \ge \int_A \div q\, =\,  \l_\O \, |A| \ \implies \ \frac{P(A)}{|A|}  \ge \frac{P(\O)}{|\O|} .$$
It follows that $A=\O$ solves the Cheeger problem in $\O$. The converse is trivial since, in that case, $\l_\O=h_\O$ by assumption.  

\end{remark}

Our notion of $\theta$-calibrability can be used first to improve  \cite[Proposition 2]{alter2005characterization},
without any convexity assumption.

\begin{proposition}\label{mlDtheta}
Let $\theta \ge 1$, and $\Omega \subset \RR^N$ be a bounded  set of finite perimeter. The following assertions are equivalent:
\begin{itemize}
\item[(i)] $\Omega$ is $\theta$-calibrable,
\item[(ii)] $\Omega$ is a minimizer of $m(\theta\lambda_\Omega,\Omega)$.
\end{itemize}
\end{proposition}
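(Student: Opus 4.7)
The plan is to recognize that both characterizations are essentially the same statement, obtained by applying Theorem~\ref{duality2} in the ambient domain $D := \Omega$ with parameter $\lambda := \theta\lambda_\Omega$. The key structural observation is that if we take $D = \Omega$ in the optimality conditions \eqref{opt1}--\eqref{opt2}, then $D\setminus\Omega = \emptyset$, so the pointwise condition $\Div\bar q = \lambda$ on $D\setminus\Omega$ becomes vacuous, and the remaining conditions match verbatim the three requirements of Definition~\ref{def:tcalib}.

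For the implication (i) $\Rightarrow$ (ii), I would proceed by direct verification, without even invoking the duality theorem. Let $q$ be a $\theta$-calibration field for $\Omega$. Since $q\cdot\nu_\Omega = 1$ on $\partial\Omega$, the generalized integration-by-parts formula (Anzellotti's pairing, see Remark~\ref{trace}) gives $\int_\Omega \Div q\,dx = P(\Omega)$. For any competitor $A\subset\Omega$ of finite perimeter, the inequality $\lVert q\rVert_\infty\le 1$ yields $\int_A \Div q\,dx \le P(A)$, and the upper bound $\Div q \le \theta\lambda_\Omega$ gives $\int_{\Omega\setminus A}\Div q\,dx \le \theta\lambda_\Omega\,|\Omega\setminus A|$. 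Combining these,
\begin{equation*}
P(\Omega) - P(A)\ \le\ \int_\Omega \Div q\,dx - \int_A \Div q\,dx\ =\ \int_{\Omega\setminus A}\Div q\,dx\ \le\ \theta\lambda_\Omega\,|\Omega\setminus A|,
\end{equation*}
which rearranges to $J_{\theta\lambda_\Omega}(\Omega)\le J_{\theta\lambda_\Omega}(A)$, proving that $\Omega$ minimizes $m(\theta\lambda_\Omega,\Omega)$.

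For (ii) $\Rightarrow$ (i), I would invoke the existence part of Theorem~\ref{duality2}(i): with $D=\Omega$ and $\lambda = \theta\lambda_\Omega$, the dual supremum \eqref{dual2} is attained by some $\bar q\in L^\infty(\Omega;\RR^N)$ satisfying $|\bar q|\le 1$ and $0\le \Div \bar q\le \theta\lambda_\Omega$. Since $\Omega$ itself is by hypothesis a primal minimizer, the pair $(\Omega,\bar q)$ is primal--dual optimal, so part (ii) of Theorem~\ref{duality2} yields the remaining condition $\bar q\cdot\nu_\Omega = 1$ on $\partial\Omega$. All three clauses of Definition~\ref{def:tcalib} are then satisfied, so $\bar q$ witnesses the $\theta$-calibrability of $\Omega$.

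The only subtlety to address is that Theorem~\ref{duality2} is stated for an ambient domain with Lipschitz boundary, whereas $\Omega$ is here only assumed to have finite perimeter. I expect this to be the main (mild) obstacle: it is resolved by noting that all arguments in the proof of Theorem~\ref{duality2} rely on the Anzellotti normal-trace framework (as stressed in Remark~\ref{trace}) rather than on classical traces, and hence extend to arbitrary bounded sets of finite perimeter. Alternatively, one can approximate $\Omega$ from outside by smooth sets $\Omega_\varepsilon \supset \Omega$ and extract a weak-$*$ limit of optimal calibrations, using Remark~\ref{theta-min-1} to preserve admissibility in the limit.
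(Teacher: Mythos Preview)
Your proposal is correct and follows essentially the same route as the paper, which simply invokes Theorem~\ref{duality2}(ii) with $D=\Omega$ and $\lambda=\theta\lambda_\Omega$; you have merely unpacked that one-line reference into the explicit forward and backward implications. Your observation about the Lipschitz hypothesis on $D$ in Theorem~\ref{duality2} versus the finite-perimeter hypothesis here is well spotted---the paper glosses over this point entirely---and your proposed resolution via the Anzellotti pairing (already signalled in Remark~\ref{trace}) is the right one.
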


\begin{proof} In view of Definition \ref{def:tcalib}, it is enough to apply the assertion (ii) of Theorem \ref{duality2} taking $D=\O$ and $\l= \theta\, \l_D$.

\end{proof}

Regarding the original problem \eqref{mlD}, we obtain the following result:
\begin{corollary}\label{theta-min}
Let $\Omega\subset D$ with positive measure. Then $\O$ is $\theta$-calibrable for a suitable constant $\theta\ge 1$ if and only if it solves  $m(\lambda^*,D)$ for some $\lambda^*\ge h_D$. In this case,  the calibrability constant of $\O$ is given by:
\begin{equation}\label{minlambda}
 \theta_\O \ =\ \min \left\{ \frac{\l}{\l_\Omega}\ :  \ \text{$\O$ solves $m(\l,\O)$ } \right\} .
\end{equation}

\end{corollary}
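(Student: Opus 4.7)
The proof combines Proposition~\ref{mlDtheta} (which equates $\theta$-calibrability of $\O$ with $\O$ being a minimizer of $m(\theta\l_\O,\O)$), Proposition~\ref{monotony}, and the duality of Theorem~\ref{duality2}(ii).

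For the implication $(\Leftarrow)$ I would assume $\O$ solves $m(\l^*, D)$ with $\l^* \ge h_D$ and extract a calibrating field directly from the dual side. By Theorem~\ref{duality2}(ii), there exists $\ov q \in L^\infty(D;\R^N)$ satisfying \eqref{opt1}--\eqref{opt2}. The restriction $q := \ov q|_\O$ immediately satisfies the three conditions of Definition~\ref{def:tcalib} with constant $\theta = \l^*/\l_\O$. To see that $\theta\ge 1$, I distinguish two cases: if $\l^* > h_D$, Proposition~\ref{monotony}(iii) yields $\l_\O < \l^*$, hence $\theta > 1$; if $\l^* = h_D$, then $J_{\l^*}(\O) = m(\l^*,D) = 0$ forces $\l_\O = h_D = \l^*$, so $\O$ is a Cheeger set of $D$ and $\theta = 1$.

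For the formula \eqref{minlambda}, Proposition~\ref{mlDtheta} identifies the set $\{\l : \O \text{ solves } m(\l,\O)\}$ with $\{\theta\,\l_\O : \O \text{ is } \theta\text{-calibrable}\}$. By Remark~\ref{theta-min-1} the set of admissible $\theta$'s is the half-line $[\theta_\O,\,+\infty)$, hence $\{\l : \O \text{ solves } m(\l,\O)\} = [\theta_\O\,\l_\O,\,+\infty)$, and the minimum of $\l/\l_\O$ over this half-line is exactly $\theta_\O$.

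For the converse $(\Rightarrow)$, the plan is to set $\l^* := \theta_\O\,\l_\O$ and construct an admissible field $\ov q \in L^\infty(D;\R^N)$ for the dual problem \eqref{dual2}, thereby realizing $\O$ as a minimizer of $m(\l^*, D)$. The natural construction is to take $\ov q = q$ on $\O$ (where $q$ is the intrinsic calibration supplied by Definition~\ref{def:tcalib}) and to extend on $D\setminus\O$ with $\Div \ov q = \l^*$ and matched normal trace $\ov q \cdot \nu_\O = 1$ on $\partial\O \cap D$. This extension step is the main obstacle: one must maintain $|\ov q| \le 1$ globally and avoid any singular part in the divergence across $\partial\O \cap D$, which imposes a nontrivial geometric compatibility between $\O$ and the ambient $D$ and is where the threshold $\l^* \ge h_D$ enters essentially through Proposition~\ref{monotony}(iii).
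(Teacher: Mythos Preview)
Your treatment of the direction ``$\O$ solves $m(\l^*,D)\Rightarrow\O$ is $\theta$-calibrable'' and of formula \eqref{minlambda} is correct and matches the paper's argument: the paper routes through Proposition~\ref{monotony}(iii) and Proposition~\ref{mlDtheta}, while you restrict the optimal dual field from Theorem~\ref{duality2}(ii) directly to $\O$; these are equivalent, and your case split $\l^*>h_D$ versus $\l^*=h_D$ for verifying $\theta\ge1$ is exactly what the paper does.

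The genuine gap is in your plan for the converse. The extension of the intrinsic calibration $q$ from $\O$ to a field on all of $D$ with $\Div\ov q=\l^*$ on $D\setminus\O$ and $|\ov q|\le1$ cannot be carried out in general, because the implication ``$\O$ is $\theta$-calibrable $\Rightarrow$ $\O$ solves $m(\l^*,D)$'' is \emph{false} as literally written. For instance, let $D$ be a square and $\O=B(x_0,r)$ a small disk with closure in $D$. Then $\O$ is self-Cheeger, hence $1$-calibrable. But if $\O$ solved $m(\l^*,D)$, any disjoint translate $\O'\subset D$ would also be optimal (same perimeter and area), and then $J_{\l^*}(\O\cup\O')=2\,m(\l^*,D)\le m(\l^*,D)\le0$ forces $m(\l^*,D)=0$, hence $\l^*\le h_D$; yet $J_{\l^*}(\O)=0$ gives $\l^*=\l_\O=2/r$, which for small $r$ exceeds $h_D$. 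So no such $\l^*$ exists, and your extension step must fail here.

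Comparing with the paper's own proof of the converse resolves the issue: it invokes only Proposition~\ref{mlDtheta}, which yields that $\O$ solves $m(\theta\l_\O,\O)$, \emph{not} $m(\theta\l_\O,D)$ (the printed ``$m(\l,D)$ for $\l=\theta\l_D$'' is evidently a slip of the pen for $m(\l,\O)$ with $\l=\theta\l_\O$). This, together with the fact that formula \eqref{minlambda} is already stated in terms of $m(\l,\O)$ and that every later application of the corollary (e.g.\ in Lemma~\ref{intervals}, Proposition~\ref{estim2}, Corollary~\ref{lemcalibr2}) uses it with $\O=D$ or via \eqref{minlambda}, indicates that the ``$D$'' in the equivalence should be read as ``$\O$''. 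Under that reading the converse is a one-line consequence of Proposition~\ref{mlDtheta}, and no extension argument is needed.
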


\begin{proof} 
Suppose that $\Omega$ solves problem $m(\lambda^*,D)$. If  $\l^*=h_D$, then the assumption $|\O|>0$ implies that $\O$ is a Cheeger set of $D$. Therefore, by \eqref{1calib=Cheeger}, $\O$ is $1$-calibrable and $\theta_\O=1$. If $\l^*>h_D$, we know that $m(\l^*,D)<0$. Then, by the assertion (iii) of Proposition \ref{monotony}, it follows  that  $m(\l^*,D)= m(\l^*,\O)$ and $\lambda_\Omega < \lambda^*$. Therefore, in virtue of Proposition \ref {mlDtheta}, 
 $\O$ is $\theta^*$-calibrable for the constant $\theta^*= \frac{\lambda^*}{\lambda_\Omega} >1$. 
 
 Conversely, let us assume that $\O\subset D$ is  $\theta$- calibrable for  some $\theta \ge 1$. Then by Proposition \ref {mlDtheta}, $\O$ solves $m(\l,D)$ for $\l=\theta\, \l_D$.  Accordingly we can conclude with the equality characterizing the calibrability constant $\theta_\O$ when it is finite.  
 
\end{proof}

The characterization of $\theta$-calibrable sets among the class of finite perimeter subsets in $\RR^N$ is a difficult issue.
In the case of a convex set $\O\subset \R^N$, we have the following result directly deduced from \cite[Theorem 9]{alter2005characterization}
and Corollary \ref{theta-min}.

\begin{proposition}\label{chathetacalib}
Let $\Omega\subset\RR^N$ be convex, bounded of class $C^{1,1}$. Let  $\kappa_\infty(\partial \Omega)$ denote the $L^\infty$-norm of the mean curvature of $\partial \Omega$. Then the calibrability constant of $\O$ is given by: 
\begin{equation}\label{tcalib=}
\theta_\O = \max \left\{1, \frac{(N-1)}{\l_\O}\, \kappa_\infty(\partial \Omega) \right\}
\end{equation}
\end{proposition}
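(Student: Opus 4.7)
The plan is to combine Corollary~\ref{theta-min} with the characterization of convex calibrable sets from \cite{alter2005characterization}. By Corollary~\ref{theta-min}, $\theta_\Omega = \lambda^*/\lambda_\Omega$ where $\lambda^*$ is the smallest value for which $\Omega$ solves $m(\lambda,\Omega)$ (the minimum is attained since, by Proposition~\ref{monotony}(ii), the set of such $\lambda$ is a half-line $[\lambda^*,+\infty)$). Specializing Theorem~\ref{duality2}(ii) to $D=\Omega$ and to $\Omega$ itself as candidate minimizer, the complement $D\setminus\Omega$ is empty, so the optimality conditions collapse to the existence of a vector field $q\in L^\infty(\Omega;\R^N)$ with $|q|\le 1$, $0\le\Div q\le\lambda$ a.e.\ in $\Omega$, and $q\cdot\nu_\Omega=1$ $\HH^{N-1}$-a.e.\ on $\partial\Omega$. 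Hence $\lambda^*$ is simply the smallest $\lambda$ admitting such a vector field.

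The next step is to show $\lambda^*=\max\{\lambda_\Omega,(N-1)\kappa_\infty(\partial\Omega)\}$. The inequality $\lambda^*\ge\lambda_\Omega$ follows at once by integration by parts: $P(\Omega)=\int_{\partial\Omega}q\cdot\nu_\Omega=\int_\Omega\Div q\le\lambda|\Omega|$. The inequality $\lambda^*\ge(N-1)\kappa_\infty$ comes from a first-order shape-variation argument applied to $\Omega$ viewed as a minimizer of $J_\lambda=P(\cdot)-\lambda|\cdot|$: testing with inward normal perturbations $\phi_t(x)=x-tf(x)\nu_\Omega(x)$ for $f\ge 0$ yields
\begin{equation*}
\tfrac{d}{dt}J_\lambda\bigl(\phi_t(\Omega)\bigr)\Big|_{t=0}\ =\ \int_{\partial\Omega}f\bigl[\lambda-(N-1)H\bigr]\,d\HH^{N-1}\ \ge\ 0,
\end{equation*}
which forces $(N-1)H\le\lambda$ $\HH^{N-1}$-a.e.\ on $\partial\Omega$, i.e.\ $(N-1)\kappa_\infty\le\lambda$. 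The matching upper bound, namely that for every $\lambda\ge\max\{\lambda_\Omega,(N-1)\kappa_\infty\}$ an admissible $q$ does exist, is essentially the content of \cite[Theorem~9]{alter2005characterization}: starting from the outward normal field $\nu_\Omega\circ\pi_{\partial\Omega}$ defined on a tubular neighborhood of $\partial\Omega$ and modulating it by a suitable scalar profile in the distance variable, one extends it into the interior in such a way that $\Div q$ remains nonnegative and bounded above by $(N-1)\kappa_\infty$.

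Combining these bounds yields $\lambda^*=\max\{\lambda_\Omega,(N-1)\kappa_\infty\}$, hence $\theta_\Omega=\max\{1,(N-1)\kappa_\infty/\lambda_\Omega\}$, as claimed. The main obstacle is precisely the upper-bound construction: the naive choice $q=\nu_\Omega\circ\pi_{\partial\Omega}$ has divergence $\sum_i\kappa_i(\pi(x))/(1-d(x)\kappa_i(\pi(x)))$, which grows as $x$ approaches the cut locus of $\partial\Omega$, so a carefully tuned scalar modulation in the normal direction is indispensable; it is at this stage that both the convexity of $\Omega$ and its $C^{1,1}$ regularity enter critically to guarantee the global bound $\Div q\le(N-1)\kappa_\infty$ throughout $\Omega$.
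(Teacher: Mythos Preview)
Your proposal is correct and follows exactly the route the paper indicates: the paper does not give a detailed proof but simply states that the result is ``directly deduced from \cite[Theorem~9]{alter2005characterization} and Corollary~\ref{theta-min}'', and your argument spells out precisely this deduction. The extra detail you supply---the integration-by-parts lower bound $\lambda^*\ge\lambda_\Omega$, the inward shape-variation argument for $\lambda^*\ge(N-1)\kappa_\infty$, and the sketch of the tubular-neighborhood construction underlying the cited Theorem~9---goes beyond what the paper writes but stays within the same framework.
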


\section{Comparison results} 
\label{secbb0}

In this section, we focus  on the initial question raised in the introduction about comparing the free boundary problems \eqref{pbbeta}
and \eqref{beta0}, for a given value of the parameter $\l>0$. Recall that  the respective infima of these problems
  $\beta(\lambda)$ and $\beta_0(\lambda)$ always satisfy the inequality $\beta(\l)\le \beta_0(\l)$.

We are going to prove  that this inequality is strict if and only if the minimum  $\beta(\lambda)$ of the minimal surfaces free boundary problem \eqref{pbbeta}
is not reached by none of the trivial competitors  $u\equiv 0$ or $u\equiv 1$.
Accordingly, let us introduce the two following critical values of $\lambda$:
\begin{align}
\lambda_0 &= \sup \left\{ \lambda\ge 0 \ :\ u\equiv 0 \; \text{solves $\beta(\lambda)$}\right\},\label{def:lambda0} \\
\lambda_1 &= \inf  \left\{ \lambda\ge 0 \ :\ u\equiv 1 \; \text{solves $\beta(\lambda)$}\right\}.\label{def:lambda1}
\end{align}
After recalling the definition of the constants $h_D, \l_D, \theta_D$ given in \eqref{CheegerConst}, \eqref{def:lD} and \eqref{theta_O}, we  give here
a first result:
\begin{lemma} \label{intervals} Let $\l_0,\l_1$ be defined as above. Then
\begin{itemize}
\item [(i)] \ It holds $0\le\l_0 \le h_D$ and $u\equiv 0  $ solves $\beta(\lambda)$ if and only if $\l\in [0,\l_0]$.
\item [(ii)] \ It holds $\theta_D \l_D\le\l_1\le +\infty$ and   $u\equiv 1$  solves $\beta(\lambda)$ if and only if $\l\ge \l_1$.
\item [(iii)] \ The inequality  $\l_0\le\l_1$  is strict if $D$ is not self-Cheeger. 
\end{itemize}

\end{lemma}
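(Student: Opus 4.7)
The plan is to read everything off the fact that $\beta(\l)=\inf_u E_\l(u)$ is the infimum of a family of affine functions $\l\mapsto E_\l(u)$ with slopes in $[-|D|,0]$, so $\beta$ is concave, continuous and non-increasing on $[0,+\infty)$. First I would identify the energies of the two trivial competitors: $E_\l(0)=|D|$ is independent of $\l$, while a direct computation gives $E_\l(1)=|D|+P(D)-\l\,|D|$ (the trace of $u\equiv 1$ on $\partial D$ contributes $P(D)$ and $|\{u\ge 1\}|=|D|$). Since, for every admissible $v\in BV(D;[0,1])$, the map $\l\mapsto E_\l(v)-E_\l(0)$ is non-increasing and $\l\mapsto E_\l(v)-E_\l(1)$ is non-decreasing, continuity of $\beta$ then forces the set on which $u\equiv 0$ (resp.\ $u\equiv 1$) is a minimizer to be a closed interval of the form $[0,\l_0]$ (resp.\ $[\l_1,+\infty)$). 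This already yields the ``iff'' clauses in (i) and (ii).

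For the upper bound in (i), the plan is to use the comparison \eqref{ineqbeta}: if $u\equiv 0$ is optimal at $\l$, then $|D|=\beta(\l)\le\beta_0(\l)=|D|+m(\l,D)$, i.e.\ $m(\l,D)\ge 0$; by Proposition \ref{monotony}(i) this forces $\l\le h_D$. Non-negativity of $\l_0$ is immediate since $u\equiv 0$ trivially minimizes $E_0$.

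For the lower bound in (ii), I would test optimality of $u\equiv 1$ against characteristic functions $\one_\O$ with $\O\subset D$. Using the identity \eqref{E=E_0}, the inequality $E_\l(1)\le E_\l(\one_\O)$ reads
\[
P(D)-\l\,|D|\ \le\ P(\O)-\l\,|\O| \qquad\text{for every measurable } \O\subset D,
\]
which is exactly the statement that $D$ itself is a minimizer of the shape problem \eqref{mlD}. Corollary \ref{theta-min} applied to $\O=D$, combined with the characterization \eqref{minlambda}, then identifies the smallest such $\l$ as $\theta_D\,\l_D$; hence $\l_1\ge\theta_D\,\l_D$, with the convention $\l_1=+\infty$ if $D$ never minimizes $m(\cdot,D)$ (i.e.\ is not calibrable in any degree).

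Finally, (iii) is a concatenation: by Remark \ref{calib-Cheeger} (see \eqref{1calib=Cheeger}), $D$ is self-Cheeger iff $h_D=\l_D$, so under the non-self-Cheeger hypothesis
\[
\l_0\ \le\ h_D\ <\ \l_D\ \le\ \theta_D\,\l_D\ \le\ \l_1 ,
\]
the penultimate step using $\theta_D\ge 1$ (Remark \ref{1calib}). The only genuinely non-routine ingredient is the calibrability reading of the threshold in (ii)---the identification of $\theta_D\,\l_D$ with the smallest $\l$ for which $D$ solves $m(\l,D)$; everything else is direct bookkeeping with the two trivial competitors and the comparison \eqref{ineqbeta}.
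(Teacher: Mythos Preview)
Your proposal is correct and follows essentially the same route as the paper. The only noticeable difference is in part (ii): to show that the set $\{\l:u\equiv 1\text{ solves }\beta(\l)\}$ is an upper interval, the paper picks a minimizer $u$ of $\beta(\l)$ for $\l>\l_1$ and uses an ad hoc comparison (adding $E_\l(u)\le E_\l(1)$ to $E_{\l_1}(1)\le E_{\l_1}(u)$) to force $|\{u<1\}|=0$, whereas you simply observe that $\l\mapsto E_\l(v)-E_\l(1)$ is non-decreasing for every competitor $v$---this is a cleaner phrasing of the same inequality and avoids invoking the existence of minimizers. One small wording caution: when you say Corollary \ref{theta-min} ``identifies the smallest such $\l$ as $\theta_D\l_D$,'' be careful that \eqref{minlambda} gives the smallest $\l$ for which $D$ solves $m(\l,D)$, which a priori is only a lower bound for $\l_1$ (you have shown the one-way implication $u\equiv 1$ optimal $\Rightarrow$ $D$ solves $m(\l,D)$); that is of course exactly what is needed, but the sentence as written could be misread.
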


\begin{proof} By taking $u\equiv 0$ as a competitor in \eqref{pbbeta}, we see that  $\beta(\l)\le |D|$ for any $\l\ge 0$ while the equality $\beta(\l)=|D|$ means that $u\equiv 0$
is a solution. Since the function $\beta$ is non increasing, it follows that $\beta(\l')=|D|$ for $\l'\in [0,\l_0]$. Therefore the subset appearing in \eqref{def:lambda0} is the full interval $[0,\l_0]$.
Futhermore, it holds  $\l_0\le h_D$ since $\beta_0(\l_0) \ge \beta(\l_0)=|D|$ implies that $m(\l_0, D) =0$ in virtue of \eqref{mlD=0}.
The positivity of $\l_0$ will follow from  Proposition \ref{estim2} where  a positive lower bound  is given.

Let us prove (ii). The set of $\l\ge 0$ for which  $u\equiv 1$ solves $\beta(\l)$ coincides  with the set 
$$J:=\{ \l\ge 0 : \beta(\l)=  |D| + P(D)-\l |D|\} .$$
 By the continuity of the concave function $\beta$, $J$ is closed and thereby  $\l_1\in J$.
It follows that $\one_D$ solves also $\beta_0(\l_1)$, hence  is a solution of $m(\l_1,D)$. In virtue of Corollary \ref{theta-min}, we 
infer the inequality $\l_1 \ge \theta_D\, \l_D$ .  Assume that $\l_1<+\infty$ and let $\l >\l_1$. 
To show that $J=[\l_1,+\infty)$, we need to demonstrate that $u\equiv 1$ is solution of $\beta(\l)$ for any $\l>\l_1$. This is a consequence of the following general comparison principle, namely that $\l \ge \mu$ implies that $u\ge v$ whenever $u, v$ solve $\beta(\l)$ and $\beta(\l\ge \mu)$. 
The conclusion will follow by applying it with $v\equiv 1$ solution to $\beta(\l_1)$ (taking into account that $u\le 1$).
Let us validate this principle in our case showing that $u\le 1$. Since $u$ minimizes  $E_\l$ (see \eqref{El}) on $BV(D;[0,1])$,
we have $E_\l(u) \le E_\l(\one_D)$, so that calling $F$ the functional which agrees with $E_\l$ when $\l=0$,  we get :
$  F(u)-F(\one_D) \le - \l \, | \{u<1\}| .$
Similarly as $\one_D$ minimizes $E_{\l_1}$, we obtain
$  F(\one_D) - F(u) \le   \l_1 \, | \{u<1\}|.$
Adding these two  inequalities above, we are led to\  $0\le (\l_1-\l)\,  |\{u<1\}|$, thus $|\{u<1\}|=0$. 
That proves that  $u=\one_ D$ is the unique minimizer of $\beta(\l)$ for every $\l>\l_1$.  

Finally, from the assertions (i) and (ii) and recalling that $\theta_D \ge 1$, we know  that 
$\l_0 \le h_D \le \l_D\, \theta_D\le \l_1$. Thefore, the inequality $\l_0\le \l_1$ is an equality if and only if $h_D=\lambda_D$ (and $\theta_D=1$).
This happens only if $D$ is self-Cheeger, hence the assertion (iii).

\end{proof}

Next we  state that, if $\lambda_0 <\lambda_1$ , then a solution to problem $\beta(\lambda)$ when $\l\in(\l_0,\l_1)$ will never be of the kind $\one_\Omega$. By the assertion (iii) of Lemma \ref{intervals}, this occurs in particular when $D$ is not a self-Cheeger set. 

\begin{theorem}\label{beta<beta0} Assume that  $\l_0,\l_1$ defined in \eqref{def:lambda0}\eqref{def:lambda1} 
 such that $\l_0<\l_1$. Then the strict inequality  
$\beta(\lambda) < \beta_0(\lambda)$ holds for every $\lambda\in (\lambda_0,\lambda_1)$.
\end{theorem}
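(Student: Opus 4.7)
The argument is by contradiction. Suppose $\beta(\lambda)=\beta_0(\lambda)$ for some $\lambda\in(\lambda_0,\lambda_1)$. Since the functional $E_\lambda^0$ is one-homogeneous in the gradient, the exclusion principle recalled in the introduction provides a minimizer of $\beta_0(\lambda)$ of the form $u_0=\one_\Omega$, with $\Omega$ itself a solution of $m(\lambda,D)$. By the identity \eqref{E=E_0}, the working equality then forces $E_\lambda(\one_\Omega)=\beta(\lambda)$, so $\one_\Omega$ also minimizes $\beta(\lambda)$. The whole point is to rule this out.

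I split on $|\Omega|$. If $|\Omega|=0$, then $\beta(\lambda)=|D|=E_\lambda(0)$, which means $u\equiv 0$ minimizes $\beta(\lambda)$, contradicting $\lambda>\lambda_0$ in view of \eqref{def:lambda0}. Symmetrically, if $|\Omega|=|D|$ then $\one_D$ minimizes $\beta(\lambda)$, contradicting $\lambda<\lambda_1$. Only the nondegenerate case $0<|\Omega|<|D|$ remains, and for it I construct an explicit $\beta$-competitor strictly cheaper than $\one_\Omega$.

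The construction exploits the strict inequality $\sqrt{1+|z|^2}<1+|z|$ valid for every $z\ne 0$: smoothing the jump of $\one_\Omega$ across $\partial\Omega$ on a thin tubular layer creates a gradient whose contribution to $E_\lambda$ is strictly smaller than its contribution to $E_\lambda^0$, while all other terms are essentially unchanged. Concretely, take the Lipschitz competitor $u_\delta(x):=(1-\dist(x,\Omega)/\delta)_+$ for small $\delta>0$ and set $T_\delta:=\{0<\dist(\cdot,\Omega)<\delta\}$. Since $\{u_\delta\ge 1\}=\Omega$ and $D^s u_\delta=0$, a direct coarea computation yields
\begin{equation*}
E_\lambda(u_\delta)=|D|+\frac{|T_\delta\cap D|}{\delta}\bigl(\sqrt{\delta^2+1}-\delta\bigr)+\int_{\partial D}u_\delta\,d\HH^{N-1}-\lambda|\Omega|.
\end{equation*}
As $\delta\to 0^+$, $|T_\delta\cap D|/\delta$ tends to $\int_D|D\one_\Omega|$ and $\int_{\partial D}u_\delta\,d\HH^{N-1}$ to the trace $\int_{\partial D}\one_\Omega\,d\HH^{N-1}$, so that $E_\lambda(u_\delta)\to E_\lambda(\one_\Omega)$. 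Since moreover $\sqrt{\delta^2+1}-\delta<1$ strictly for every $\delta>0$, a first-order expansion in $\delta$ yields $E_\lambda(u_\delta)<E_\lambda(\one_\Omega)=\beta_0(\lambda)$ for $\delta$ small enough, contradicting the standing hypothesis.

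The main delicate point is to secure the sign of the $O(\delta)$ correction in every geometric configuration: when the curvature of $\partial\Omega$ makes the excess $|T_\delta\cap D|/\delta-\int_D|D\one_\Omega|$ too large, the outward profile above may not suffice. I would then replace it by a shifted profile depending on a free parameter $\alpha\in[0,1]$ that moves the transition layer partially inside $\Omega$---reducing for $\alpha=1$ to $\min\{1,\dist(\cdot,\Omega^c)/\delta\}\cdot\one_\Omega$. The resulting shrinkage of $\{u_\delta\ge 1\}$ introduces a $\lambda$-dependent counter-term whose sign can be tuned by $\alpha$, restoring a strictly negative leading coefficient in all cases and closing the argument.
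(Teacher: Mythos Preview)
Your reduction to the case $0<|\Omega|<|D|$ is fine, but the competitor $u_\delta=(1-\dist(\cdot,\Omega)/\delta)_+$ does not give what you claim. Writing $P_{\mathrm{int}}:=\int_D|D\one_\Omega|$ and expanding $\sqrt{\delta^2+1}-\delta=1-\delta+O(\delta^2)$, together with $|T_\delta\cap D|/\delta=P_{\mathrm{int}}+c_1\delta+o(\delta)$ and $\int_{\partial D}(u_\delta-\one_\Omega)=c_2\delta+o(\delta)$, you get
\[
E_\lambda(u_\delta)-E_\lambda(\one_\Omega)=(c_1+c_2-P_{\mathrm{int}})\,\delta+o(\delta).
\]
The sign of $c_1+c_2-P_{\mathrm{int}}$ is \emph{not} controlled: $c_1$ carries the curvature of $\partial\Omega$ (which for minimizers of $m(\lambda,D)$ equals $\lambda$ along $\partial\Omega\cap D$ and can be large), and $c_2\ge 0$. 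Your last paragraph acknowledges this and appeals to a one-parameter family interpolating between outer and inner profiles, but this is only a sketch: the inner profile brings in a $+\lambda$-term from the shrinkage of $\{u\ge1\}$ and a curvature term of the opposite sign, and you have not shown that some $\alpha$ always produces a negative leading coefficient. On top of this, the Minkowski identity $|T_\delta\cap D|/\delta\to P_{\mathrm{int}}$ is not automatic for sets of finite perimeter and would require you to invoke the regularity of minimizers of $m(\lambda,D)$.

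The paper's argument avoids all of this by exploiting the \emph{quadratic} behaviour of $t\mapsto\sqrt{1+t^2}-1$ near $t=0$, rather than its sublinear behaviour at infinity. Instead of converting the whole jump into a steep gradient, it lifts $\one_\Omega$ by a tiny amount $\e\varphi$ on $D\setminus\Omega$ (with $\varphi\in W^{1,1}(D)$ matching the trace of $\one_\Omega$ on $\partial D$ and $\varphi>0$ in $D$), setting $u_\e=\max\{\e\varphi,\one_\Omega\}$. The gradient created on $D\setminus\Omega$ is $O(\e)$, so its area cost is $\int_{D\setminus\Omega}(\sqrt{1+\e^2|\nabla\varphi|^2}-1)=O(\e^2)$, while the jump across $\partial\Omega\cap D$ is reduced from $1$ to $1-\e\varphi$, saving $\e\int_{\partial\Omega\cap D}\varphi$. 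Hence
\[
\lim_{\e\to0^+}\frac{E_\lambda(u_\e)-E_\lambda(\one_\Omega)}{\e}=-\int_{\partial\Omega\cap D}\varphi\,d\HH^{N-1}<0,
\]
with no curvature or Minkowski-content bookkeeping required; only $\HH^{N-1}(\partial\Omega\cap D)>0$ and $\varphi>0$ in $D$ are used.
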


\begin{proof}
Fix $\lambda\in(\lambda_0,\lambda_1)$. We shall prove the result by contradiction
assuming  that $\beta(\lambda) = \beta_0(\lambda)$. Then, $\beta(\l)$ admits a solution of the kind $ u=\one_\Omega$  where $\Omega \subset D$ is such that $0 <|\Omega| < |D|$.  For such a set of bounded perimeter $\Omega$, denoting  by $\partial \O$ its reduced boundary, the free boundary has positive measure namely
\begin{equation}\label{FB-exists}
\HH^{N-1}(\partial \O \cap D) >0  .
\end{equation}
  Indeed, if \eqref{FB-exists} where not true, then in virtue of \eqref{decompo}, we would have $D \one_\O=0$  in the distributional sense in the domain $D$, thus implying that $\one_\O\equiv 1$. Next we consider the trace $\trace(\one_\O)$ of $\one_\O$ on $\partial D$. As an element of $L^1(\partial D,[0,1])$, it is also the trace of a suitable function $\f\in W^{1,1}(D)$, in virtue of the Lipschitz regularity of $\partial D$ and of the fact that the trace operator 
  from $W^{1,1} \to L^1(D)$ is surjective by Galiardo's Theorem  (see \cite[Theorem 9]{gagliardo}). Without loss of generality, we can assume that $\f\ge 0$
   and possibly after adding to $\f$ the distance function to $\partial D$ (which belongs to $W_0^{1,1}(D)$) and after truncating the values greater than $1$,
   we build a non negative function $\f: D \to (0,1]$ such that $\trace f = 0$ on $\partial D\setminus \partial \O$ and $\trace f = 1$ on $\partial D\cap\partial \O$.
  Accordingly, for every small $\e>0$, we define $u_\e:= \max\{ \e \, \f, \one_\O\}$. For small $\e$, the level set  $\{u_\e\ge 1\}$ coincides with $\O$. On the other hand  the  distributional gradient of $u_\e$ in the open set $D$ decomposes as
   $ Du_\e = \e \, \nabla\f \, \LL^N\res \O  - (1- \e \f) \nu_\O \HH^{N-1}\res \partial(\O\cap D)$. Therefore, its total energy  is given by:
  $$  E_\l(u_\e) = \int_{D\setminus\O} \sqrt{1+ \e^2 |\nabla \f|^2} \, dx + \int_{\partial\O} |u_\e| \, d\HH^{N-1} + \int_{\partial\O\cap D}
   (1- \e \f)\, d\HH^{N-1}  - \l \, |\O| $$
  Since $\one_\O$ is a minimizer which shares  the same trace as $u_\e$ on $\partial D$, the following limit is non negative:
  \begin{align*}
    \lim_{\e\to 0^+} \frac{ E_\l(u_\e)-E_\l(\one_\O)}{\e} &= \lim_{\e\to 0^+} \int_{D\setminus\O} \frac{\sqrt{1+ \e^2 |\nabla \f|^2}-1}{\e} \, dx 
 - \,  \int_{\partial\O\cap D} \f\,   d\HH^{N-1} \\
 &=  - \,  \int_{\partial\O\cap D} \f\,   d\HH^{N-1} ,
 \end{align*}
where to pass from the first to the second line, we used dominated convergence. 
Recalling \eqref{FB-exists} and that $\f>0$ in $D$ by the previous construction, we infer that the limit above is strictly negative hence  the wished contradiction.

\begin{remark} \label{slope}  In the case where $\l_0\le 1< \l_1$,  it turns out that for $\l\in (1,\l_1)$,  a solution $u$ for $\beta(\l)$ (in the relaxed form \eqref{betarelax}) can't be in $W^{1,1}(D)$. Indeed in this case, the set $\O=\{u=1\}$ has a non empty free boundary $\partial\O \cap D$
and, by computing the  shape derivative of the  functional $J(\O)$ defined in \eqref{shape} (see for instance  \cite{boufraluc,henrot}), we obtain the optimality condition 
$1-\l= \frac{1}{\sqrt{1+|\nabla u|^2}}$ holding on $\partial\O \cap D$. This relation assigns the angle of the minimal surface with the plateau $\O \times {1}$.
Clearly this relation is  neither be fullfiled if $\l>1$. This means that any  $u$ solving  $\beta(\l)$ should exhibit a jump on the free boundary in order to reach the value $1$.

\end{remark}

\end{proof}

\medskip

\begin{example}\label{ex1D} In the one dimensional case, it is possible to compute explicitly the values of $\l_0,\l_1$. 
Owing to Theorem \ref{beta<beta0}, the occurence of the strict inequality $\beta(\l)<\beta_0(\l)$ is possible only if $\l_0<\l_1$. 
Without loss of generality, lets us consider  the domain  $D_R=(-R,R)$. It is a self-Cheeger set with constant $h_{D_R}=\l_{D_R}= \frac1{R}$
(and $\theta_{D_R}=1$). Recall that 
 \begin{align*}
\beta(\lambda)&= \inf\aco{ \int_{-R}^R \sqrt{1+u'^2}\, dx - \lambda|\{u\ge 1\}| \sepa u\in W^{1,1}(-R,R),\; u(\pm R) = 0},\\
\beta_0(\lambda) &= 2 R + m(\l, D_R) ,
\end{align*}
where the second equality follows from \eqref{ineqbeta}.
As shown in the next result, the strict inequality $\l_0<\l_1$  
 will occur if and only the length of interval $D$ is greater than $2$. 

\begin{lemma}\label{1dcase} Let $\l_0=\l_0(R)$ and $\l_1=\l_1(R)$ be the critical values  associated to  $D_R$. Then:
\begin{align} \label{1dcase-1} \l_0(R)\ =\ \begin{cases} \frac1{R} & \text{if $R\le 1$} \\ \frac2{1+R^2} & \text{if $R\ge 1$} \end{cases}  \quad,\quad
 \l_1(R)\ =\ \begin{cases} \frac1{R} & \text{if $R\le 1$} \\ 1 & \text{if $R\ge 1$} \end{cases}
\end{align}
\end{lemma}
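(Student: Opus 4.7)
The plan is to reduce $\beta(\lambda)$ to an explicit one-variable minimization over trapezoidal profiles, and then read off $\lambda_0(R),\lambda_1(R)$ by elementary calculus. For each $a\in[0,R)$, let $u_a\in W^{1,1}_0(D_R)$ be the symmetric trapezoid equal to $1$ on $[-a,a]$ and linear from $1$ at $\pm a$ to $0$ at $\pm R$, and set $u_R\equiv \one_{D_R}$ as its natural BV-limit. A direct computation yields
\begin{equation*}
E_\lambda(u_a)\;=\;f(a)\;:=\;2\sqrt{(R-a)^2+1}+2a(1-\lambda),\qquad a\in[0,R],
\end{equation*}
with $f(R)=2+2R(1-\lambda)=E_\lambda(\one_{D_R})$, while $E_\lambda(0)=2R$ differs from $f(0)=2\sqrt{R^2+1}$ since $u_0$ is not the zero function. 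The key reduction
\begin{equation*}
\beta(\lambda)\;=\;\min\Bigl\{2R,\;\min_{a\in(0,R]}f(a)\Bigr\}
\end{equation*}
follows from the pointwise bound $E_\lambda(u)\ge f(|\{u\ge 1\}|/2)$, valid for every $u\in BV(D_R;[0,1])$ with $|\{u\ge 1\}|>0$: the sum $\int_{D_R}\sqrt{1+u'^2}\,dx+|D^su|(D_R)+\int_{\partial D_R}|u|\,d\HH^0$ equals the length of the completed graph of $u$, a rectifiable curve from $(-R,0)$ to $(R,0)$ visiting height $\ge 1$ on a horizontal set of measure $|\{u\ge 1\}|$; symmetric decreasing rearrangement and the triangle inequality on each monotone branch show that the trapezoidal curve is of minimal length.

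For $\lambda_0(R)$, the condition ``$u\equiv 0$ is a minimizer'' reads $2R\le f(a)$ for every $a\in(0,R]$. From $f'(a)=2\bigl[(1-\lambda)-(R-a)/\sqrt{(R-a)^2+1}\bigr]$ I distinguish three regimes. If $\lambda\ge 1$, $f$ is non-increasing and the condition reduces to $2R\le f(R)$, i.e. $\lambda\le 1/R$. If $0<\lambda<1$ and the interior critical point $a^*=R-(1-\lambda)/\sqrt{\lambda(2-\lambda)}$ lies in $(0,R)$, a short computation yields $f(a^*)=2\sqrt{\lambda(2-\lambda)}+2R(1-\lambda)$ and $2R\le f(a^*)$ rewrites as $\lambda(R^2+1)\le 2$. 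Finally, if $a^*\le 0$ (equivalently $\lambda\le 1-R/\sqrt{R^2+1}$), $f$ is increasing on $[0,R]$ and the required inequality $2R\le f(0^+)=2\sqrt{R^2+1}$ is automatic. Combining the three regimes yields $\lambda_0(R)=\min\{1/R,\,2/(R^2+1)\}$, which equals $1/R$ for $R\le 1$ and $2/(R^2+1)$ for $R\ge 1$.

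For $\lambda_1(R)$, the condition ``$u\equiv 1$ is a minimizer'' requires both $f(R)\le f(a)$ for every $a\in[0,R]$ and $f(R)\le 2R$. Since $f'(R)=2(1-\lambda)$, the first condition forces $\lambda\ge 1$; under this assumption $f$ is globally non-increasing so $f(R)=\min f$ automatically, while the second condition rewrites as $\lambda\ge 1/R$. Hence $u\equiv 1$ is a relaxed minimizer iff $\lambda\ge\max\{1,1/R\}$, which gives $\lambda_1(R)=1$ for $R\ge 1$ and $\lambda_1(R)=1/R$ for $R\le 1$.

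The main obstacle will be the trapezoidal reduction: in the BV setting one must handle jumps, the Cantor part, and possibly disconnected super-level sets $\{u\ge 1\}$, and the boundary trace term in $E_\lambda$ must be incorporated in the graph-length interpretation. The cleanest route is to combine symmetric decreasing rearrangement for BV functions (which does not increase the area integrand nor the $L^1$ trace term) with the elementary one-dimensional geometric fact that the shortest rectifiable curve from $(-R,0)$ to $(R,0)$ meeting $\{y=1\}$ on a horizontal set of measure $2a$ is precisely the trapezoid.
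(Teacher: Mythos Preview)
Your approach is essentially identical to the paper's: the paper parametrizes by $\alpha=R-a$ (the width of the linear ramp) and arrives at the same one-variable function, writing $\beta(\lambda)=\min\{2R,\min_{[0,R]}f\}$ and then splitting into the cases $R\le 1$ and $R>1$. Your graph-length/rearrangement justification for the trapezoidal reduction is a welcome addition, since the paper merely asserts this step.

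There is, however, a genuine slip in your $\lambda_0$ computation. You conclude that ``combining the three regimes yields $\lambda_0(R)=\min\{1/R,\,2/(R^2+1)\}$,'' but since $R^2+1\ge 2R$ one has $2/(R^2+1)\le 1/R$ for \emph{every} $R>0$, so this minimum is always $2/(R^2+1)$ and would give the wrong answer for $R<1$. The error is that the bound $\lambda\le 1/R$ is only the relevant constraint in the regime $\lambda\ge 1$, while $\lambda\le 2/(R^2+1)$ is only relevant in the regime $\lambda_*<\lambda<1$; these cannot be merged into a single $\min$. The correct combination is to take the \emph{union} of the admissible $\lambda$-intervals across the three regimes: for $R\le 1$ one has $2/(R^2+1)\ge 1$, so regime~2 imposes no constraint and the union is $[0,1/R]$; for $R>1$ regime~1 is empty and the union is $[0,2/(R^2+1)]$. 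With this correction your argument is complete and matches the paper's.
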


\begin{proof}
Since $D_R$ is a Cheeger set with constant $\frac1{R}$ , we know that $m(\l,D_R) =0$  if $\l< \frac1{R}$,  while $m(\l,D_R)= 2(1 -\l R)$ otherwise. It follows that:
\begin{equation}\label{beta0-ex1}
\beta_0(\lambda) = 2R -2 (\l R-1)_+ 
\end{equation}
 Concerning the minimal value $\beta(\lambda)$, it is easy to check that it is achieved by taking $u$  to be either $u\equiv 0$ (then $\beta(\l)=2R$)
 or a radial function with plateau $\{u=1\}=\{|x|\le R-\a\}$  for a suitable value  $\alpha\in[0,R)$, of the form:   
 $$u_\alpha(x) = \min \left\{1, \frac{R-|x|}{\a} \right\}  \quad \text{if $\a>0$}\quad,\quad  u_0(x) \equiv 1.$$
 Observe that the expected solution $u_\a$ is always continuous. In view of the expression of $E_\l$ given in \eqref{El}, we obtain  the equality
 \begin{align}\label{def:f}
\beta(\l) = \min \left\{ 2R, \min_{\a\in [0,R]} f(\a) \right\} \quad \text{where}\quad f(\alpha): = 2 \pare{ \sqrt{1+\alpha^2} + (1\minus \l) (R\minus\a)}.
\end{align} 
Observe that the  function $f$ above is $C^1$, strictly convex with derivative
 $$f'(\alpha)= 2\pare{\frac{\alpha}{\sqrt{1+\alpha^2}}\minus(1\minus\lambda)}.$$
Moreover, the equality $\beta(\l)=2R$ means that $u\equiv 0$ is a minimizer, while if $\beta(\l)>2R$  the unique minimizer is
$u_\a$ where $\a$ is minimal for $f$ on $[0,R]$. According to the value of $R$, we will proceed in two cases.

\med {\bf Case $R\le 1.$\ }  If $\l \le \frac1{R}$,  we have $\beta_0(\l)=2R$ by \eqref{beta0-ex1}. If $\l\le 1$, we see directly  from the expression of $f$ in \eqref{def:f} that $ \inf f \ge 2 \ge 2R$.  Hence we have  $\beta(\l)=\beta_0(\l)=2R$ while  $u\equiv 0$ is a  minimizer for both problems.
If $\l > \frac1{R}$, then as $\l>1$, we have  $f'\ge 0$ and $\min f=f(0)=2 + 2(1-\l) R <2R $.
Therefore, an optimal solution is given by $u=u_0\equiv 1$ and $\beta(\l)= \beta_0(\l)$ in view of \eqref{beta0-ex1}.
In conclusion, for every $\l\ge 0$, we have $\beta_0(\l)= \beta(\l)$ and the the critical  values $\l_0(R)$ and $\l_1(R)$ are equal to 
$ \frac1{R}.$ Note that, for $\l=\frac1{R}$, the solutions $u\equiv 0$ and $u\equiv 1$ coexist.

\med {\bf Case $R> 1.$\ } 
If $\l\ge 1$, as noticed before, $f'\ge 0$ and $\min f= f(0)<2R$.
Hence $\beta(\l)=\beta_0(\l)$ and the unique common minimizer is $u\equiv 1$. 
If $\l<1$, the convex function $f$ starts with a negative slope at zero so that it reaches its mimimum at a unique  $ \a_c\in (0,R]$ provided
 $f'(R) \ge 0$, that is if $\l \ge \l_* = 1 -\frac{R}{\sqrt{1+R^2}}$. 
 In view of \eqref{def:f},   $u_{\a_c}$ will be the unique minimizer of $\beta(\l)$
 if, in addition, it holds $f(\a_c)\le 2R$. In this case, since  $u_{\a_c}$ is  not of the form $\one_\O$,
we  will deduce the strict inequality $\beta(\l) <\beta_0(\l)$.
After some computations  \footnote{Setting $t=1-\l$ and $h(x) := \sqrt{1+x^2} - t x + R(t-1)$, we are reduced  to 
show that $min_{[0,R]} h <0$ iff $0 < t < \frac{R^2 - 1}{R^2 + 1}$. The minimum of $h$ is reached
at $x_c$ such that $\frac{x_c}{\sqrt{1+x_c^2}} = t$. 
Since $x_c = \frac{t}{\sqrt{1-t^2}}$ while $ \sqrt{1+x_c^2} = \frac{1}{\sqrt{1-t^2}}$, we obtain:
\begin{align*} \min h = \sqrt{1+x_c^2} - t x_c + R(t-1)=
 \frac{1}{\sqrt{1-t^2}} - t \left( \frac{t}{\sqrt{1-t^2}} \right) + R(t-1) = \sqrt{1-t^2} + R(t-1)
\end{align*} 
Recalling that $t<1$, we have $\min h<0$ iff $\sqrt{1-t^2} < R(1-t)$. Squaring and  dividing by $1-t$,
we are led to the condition  $1+t < R^2(1-t)$, that is $ t < \frac{R^2 - 1}{R^2 + 1}$.
 (note that $\l^*\in [\l_*,1]$ ensures that $\a_c$ belongs to $(0,R]$).}, it turns out that  the equality   $f(\a_c)\le 2R$ is true iff $\l \ge \l^*$ where $\l^*:=\frac2{1+R^2}$ If $\l\in [\l_* , \l^*)$, the minimum of $f$ is larger  than $2R$. It is also the case if 
 $\l< \l_*$, since $f'>0$ on $[0,R] $ implies that  $\min f=f(0)> 2  \ge 2R$.
 Therefore, the minimum of  $\beta(\l)$ is reached at $u\equiv 0$ for $\l\le \l^*$. For $\l \in (\l^*,1)$, there is a unique 
 solution of the kind $u_{\a}$ with $\a>0$, and ultimately,  the solution $u\equiv 1$ for $\l\ge 1$.  
    Summarizing, we have proved that $\l_0(R) =\frac2{1+R^2}$ and $\l_1(R)=1$
 \end{proof}

\end{example}

%
%
%

\med

In higher dimension $N\ge 2$ , explicit expressions for $\l_0$ and $\l_1$ are not available, except possibly in the radial case.
However we are able to derive some estimates where the role of the geometric constants   $h_D, \l_D$ and $\theta_D$ is enlightened. 
From Lemma \ref{intervals}, we already know that $\l_0 \le h_D \le \theta_D\, \lambda_D \le \l_1$.
In the next result, we use the duality result presented in Subsection \ref{dualb0} for deriving a positive lower bound for $\l_0$ and a sharp upper bound for $\l_1$ when  $D$ is calibrable.

\begin{proposition}\label{estim2} Let $D\subset \R^N$ be a general bounded Lipschitz domain. Then the critical values $\l_0,\l_1$ defined in \eqref{def:lambda0}\eqref{def:lambda1} satisfy
:
\begin{itemize}
\item[(i)] $\lambda_0 \ge \lambda_0^* $  where 
\begin{align}
\lambda_0^* := \begin{cases}
 1- \cos(h_D) & \text{if $h_D\le \frac{\pi}{2}$}\\
1 + h_D-\frac{\pi}{2} & \text{if $h_D\ge \frac{\pi}{2}$}
\end{cases}\; . \label{l0star}
\end{align}
\item[(ii)] Let $\theta_D$ be the (posssibly infinite) calibration constant of $D$ and $\lambda_D :=P(D)/|D|$. Then
\begin{align}
\theta_D\lambda_D\ \le\ \lambda_1\ \le\ \theta_D\lambda_D + 1.\label{l1esti}
\end{align}
\end{itemize}
\end{proposition}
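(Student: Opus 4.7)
The plan is to use the duality identity $\beta(\l) = \sup\eqref{dual-beta}$ from Theorem \ref{duality1} and to produce, for each assertion, an explicit admissible calibration $\sigma \in L^\infty(Q; \R^{N+1})$ whose objective $-\int_D \sigma^t(x, 0)\, dx$ matches the primal energy of the trivial candidate: $E_\l(0) = |D|$ for (i), and $E_\l(\one_D) = |D| + P(D) - \l |D|$ for (ii). The common preliminary ingredient is a ``Cheeger-type calibration'' on $D$: since $m(h_D, D) = 0$ by Proposition \ref{monotony}(i), Theorem \ref{duality2} applied with $\l = h_D$ yields $q \in L^\infty(D; \R^N)$ with $|q| \le 1$ and $\int_D (\Div q - h_D)\, dx = 0$; together with the constraint $\Div q \le h_D$ this forces $\Div q \equiv h_D$ a.e.\ in $D$.

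For (i), since the bulk constraint implies $\sigma^t \ge -1$ a.e., it suffices to exhibit an admissible $\sigma$ with $\sigma^t(x, 0) = -1$. When $h_D \le \pi/2$, I would take the sinusoidal ansatz
$$\sigma(x, t) = \bigl(-\sin(h_D t)\, q(x),\ -\cos(h_D t)\bigr),$$
whose $t$-trajectory moves along the boundary of the pointwise cone $\{\sigma^t + \sqrt{1 - |\sigma^x|^2} \ge 0\}$; divergence-freeness is precisely the identity $\Div q = h_D$, and the top trace constraint $-\cos(h_D) \ge \l - 1$ becomes $\l \le 1 - \cos(h_D)$. When $h_D > \pi/2$, the rotation can only run up to $t^* = \pi/(2 h_D) < 1$, so beyond that level I would splice in the affine field $\sigma(x, t) = (-q(x),\ h_D(t - t^*))$: it keeps $|\sigma^x| \le 1$, remains divergence-free (since $-\Div q + h_D = 0$), satisfies the bulk constraint trivially (both entries nonnegative), and matches the first piece continuously at $t = t^*$. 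The top trace then becomes $h_D - \pi/2 \ge \l - 1$, i.e.\ $\l \le 1 + h_D - \pi/2$.

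For (ii) the lower bound $\theta_D \l_D \le \l_1$ is already contained in Lemma \ref{intervals}(ii); one may assume $\theta_D < +\infty$ for the upper bound. Let $q$ realize the calibrability of $D$: $|q| \le 1$, $q \cdot \nu_D = 1$ on $\partial D$, and $0 \le \Div q \le \theta_D \l_D$. A key observation is that $\one_D$, extended by zero to $\R^N$, jumps on $\partial D$ with $\nu_u = -\nu_D$, so the natural calibration carries the sign $\sigma^x \cdot \nu_D = -1$ on the lateral boundary. This dictates the ansatz
$$\sigma(x, t) = \bigl(-q(x),\ (\l - 1) - (1 - t)\, \Div q(x)\bigr),$$
which is divergence-free, saturates the top constraint $\sigma^t(x, 1) = \l - 1$, and fulfils the bulk constraint because $\sigma^t(x, t) + \sqrt{1 - |q|^2} \ge (\l - 1) - \Div q(x) \ge (\l - 1) - \theta_D \l_D \ge 0$ under the hypothesis $\l \ge 1 + \theta_D \l_D$. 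The divergence theorem then gives $-\int_D \sigma^t(x, 0)\, dx = (1 - \l)|D| + \int_{\partial D} q \cdot \nu_D\, d\HH^{N-1} = E_\l(\one_D)$, certifying $\one_D$ as optimal for $\beta(\l)$ and hence $\l_1 \le 1 + \theta_D \l_D$.

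The main subtlety is matching the nonlinear bulk constraint with the divergence-free condition: $\sigma^t$ must traverse the cone boundary at exactly the rate dictated by $\Div q$, which is why the values $h_D$ and $\theta_D \l_D$ enter the thresholds so transparently. For (ii) the correct sign of $\sigma^x \cdot \nu_D$ (namely $-1$, inherited from the jump orientation of $\one_D$) is equally delicate; reversing it would contribute $-P(D)$ in the boundary term of the divergence identity and the dual objective could not reach $E_\l(\one_D)$.
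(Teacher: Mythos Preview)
Your proposal is correct and follows essentially the same route as the paper: both parts build the identical divergence-free calibration fields $\sigma(x,t)=(-\sin(h_D t)\,q(x),-\cos(h_D t))$ (spliced with the affine piece $(-q,\,h_D(t-t^*))$ when $h_D>\pi/2$) for (i), and $\sigma(x,t)=(-q(x),\,(\l-1)-(1-t)\Div q(x))$ for (ii). The only cosmetic differences are that the paper reaches the sinusoidal field in (i) via an intermediate optimization over an auxiliary function $\psi$ rather than writing it down directly, and that your derivation of a global $q$ with $\Div q\equiv h_D$ on $D$ from Theorem~\ref{duality2} at $\l=h_D$ is slightly cleaner than the paper's reference to Remark~\ref{1calib}.
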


\begin{remark}\label{convexD} If D is a convex set of class $C^{1,1}$, then, by virtue of Proposition \ref{chathetacalib}, the inequalities \eqref{l1esti} can be rewritten as follows:\begin{align}
\max \{\lambda_D, (N-1)\kappa_\infty(\partial D)\} \le  \lambda_1 \le 1+ \max \{\lambda_D, (N-1)\kappa_\infty(\partial D)\}. \label{l1estim}
\end{align}
In particular, if  $\partial D$ exhibits a corner, then $\theta_D=+\infty$  and $\lambda_1=+\infty$. This means that  $u\equiv 1$ can't neither be a solution to  $\beta(\lambda)$.

Note that in the one dimensional case, we have $h_D=\l_D = |D|^{-1}$ and $\theta_D=1$ so that \eqref{l1esti} becomes $\l_D\le \l_1\le 1+\l_D$. 
In Lemma \ref{1dcase}, we showed that if $|D|=2R$ with $R>1$ , then $\l_1=1$ so that $\l_1 \in [\frac1{R}, 1 + \frac1{R}]$. By sending $R$ to $1$ or to $+\infty$,
we conclude that the bounds in \eqref {l1esti} are optimal. 
\end{remark}


\begin{proof} We will construct calibration fields $\sigma \in L^\infty(Q; \R^N\times \R)$ where $Q:=D\times [0,1]$ of the kind
\begin{align} \label{specialsig}
\sigma(x,t)\ =\ \Big( -a(t)q(x),\; A(t)\Div q(x) + r(x) \Big) \quad \text{for } (x,t)\in Q
\end{align}
where
\begin{equation}\label{condiA}
\left\{
 \begin{array}{ll}
 &   a\in C([0,1])  \ \text{is such that}\  0 \le a(t)\le 1  ; \\
 &   (q,r)\in L^\infty(D;\R^N\times \R),  \  |q| \le 1\ ,\   \Div q\in L^\infty(D) ;\\
 &   A'(t) = a(t)  \ \text{(thus $\Div \sigma =0$ in $Q$).} 
\end{array}
\right.
\end{equation}
Note that, with the last condition, $A$ is Lipschitz non decreasing with a slope $A't)\le 1$.
%


\noindent
\emph{Proof of the assertion (i): }\ As noticed in the proof of Lemma \ref{intervals}, it holds $\l\le \l_0$ id and only if $\beta(\l)\ge |D|$.  In view of the duality Theorem \ref{duality2}, 
it will be the case if we can find an admissible $\sigma=(\sigma^x,\sigma^t)$ such that 
\begin{gather}\label{CS-sigma}
 \sigma^t(x,0) = -1\, , \, \sigma^t(x,1) \ge \lambda -1 \, \tinm{on} D \, , \, \sigma^t + \sqrt{1- |\sigma^x|^2} \ge 0 ,
 \,  \tinm{in} Q.
\end{gather}
Let us search $\sigma$ of the form given in \eqref{specialsig} where $r\equiv -1$ and the triple $(a,A,q)$ satisfies \eqref{condiA}. We impose the additional  condition that $q$ satisfies $\Div q=h_D$. By  Remark \ref{1calib}, such a $q$ exists and it calibrates every  Cheeger set of $D$. With this choice, we obtain
 that $\sigma^t + \sqrt{1- |\sigma^x|^2} \ge h_D\, A(t)  + \sqrt{1- A'^2(t) }$. It follows that
 the conditions in \eqref{CS-sigma}  are all met if we  select $A(t)$ so that
$$ \,  A(0)=0 \, ,\quad 0\le A'(t)\le 1 ,  \quad  h_D\, A(1)  \ge \l \, , \quad  h_D\, A(t)  + \sqrt{1- A'^2(t) } \ge 1 \quad \forall t \in[0,1].$$
Note  that the conditions above imply that   $A(1) \le 1$. Thus   the inequality $h_D\, A(1)  \ge \l$ can't be reached unless $\l \le h_D$, 
which is coherent with the assertion (i) of Lemma \ref{intervals}. For further computations, it is convenient to set
 $\psi(t):=h_D A(t)$.  Then we arrive to the fact that  $\l\le \l_0$ whenever
\begin{align}
\lambda \le  \sup \left\{ \psi(1) :\,  \psi(0 ) = 0, \ 0\le \psi'\le h_D  \ \text{and}\ \sqrt{1-\frac{|\psi'|^2}{h_D^2}} + \psi \ge 1 \ \text{on [0,1]}\right\}.\label{maxpsi}
\end{align}
Hence,  proving the assertion (i) reduces to check that the right hand side of \eqref{maxpsi} coincides with the value $\l_0^*$ given by \eqref{l0star}.
In fact the inequality constraint on $\psi$ in \eqref{maxpsi} can be rewritten equivalently as 
$0\le \psi' \le h_D \gamma(t)$ where
\begin{align*}
\gamma(s): =
\begin{cases}
\sqrt{s(2-s)} &\text{if } s \le 1\\
1 &\text{if } s>1
\end{cases}
\end{align*}
It follows that the composed function $z: [0,1]\mapsto [0,h_D]$ defined by 
$ z(t) := \int_0^{\psi(t)} \frac{1}{\gamma(s)} ds $
satisfies $0\le z'(t) \le h_D$ with a maximal value $h_D$ reached at $t=1$ if and only if $z(t)=h_D t$. Accordingly the maximal value in \eqref{maxpsi} is reached
for $\psi$ determined by the following relation holding for every $t\in [0,1]$: 
$$h_D t\ =\ \int_0^{\psi(t)} \frac{1}{\gamma(s)} ds\ =\ \begin{cases}
\arccos (1-\psi(t)) &\text{if }\, \psi(t)\le 1,\\
\frac{\pi}{2} + \psi(t)-1  &\text{if }\, \psi(t)>1.
\end{cases}. $$
We conclude that  the optimal $\psi$ for \eqref{maxpsi} is given explicitely by
$$ \psi(t) =\begin{cases}
 1- \cos(h_D t) & \text{if $h_D t\le \frac{\pi}{2}$}\\
1 + h_D t -\frac{\pi}{2} & \text{if $h_D t\ge \frac{\pi}{2}$}
\end{cases}
$$ 
thus confirming the optimal value  $\psi(1)=\l_0^*$. 

\medskip\noindent
ii) The inequality $\theta_D\lambda_D \le \lambda_1$ has been proved in Lemma   \ref{intervals}. 
In order to show that $\lambda_1 \le 1+ \theta_D\lambda_D$,  we need to demonstrate that $\beta(\l)\ge\beta_0(\l)$ for any  $\lambda > 1+ \theta_D\lambda_D$.
 Let us fix such a $\l$. Without any loss of generality, we can assume that $\theta_D<+\infty$ since otherwise $\l_1=+\infty$.
 Then, by applying Corollary \ref{theta-min} to $\O=D$, we see that $D$ is minimal for  $m(\lambda-1,D)$. Then, in virtue to Theorem \ref{duality2}, it exists a calibrating field $q$ satisfying
\begin{align*}
|q|\le 1, \quad
0\le \Div q \le \lambda-1 \tinm{in} D,\quad\tinm{and}\quad q\cdot \nu_D =1 \tinm{on} \partial D.
\end{align*}
Next we consider a vector field $\sigma$ of the form \eqref{specialsig} where $a(t)\equiv 1$, $A(t)=t-1$ and $r(x)\equiv \l-1$, that is:
\begin{align*}
\sigma(x,t)= ( -q(x), (t-1)\Div q(x) +\lambda -1 ).
\end{align*}
It is easy to verify that $\sigma$ is admissible for the dual problem of $\beta(\lambda)$, since by construction:
\begin{align*}
\sigma^t(x,1) = \lambda -1  \tinm{in} D, \quad \Div\sigma =0 \tinm{in} Q, 
\end{align*}
while by the inequalities $0\le \Div q \le \l-1$, we have  for a.e.  $(x,t)\in Q$
\begin{align*}
\sqrt{1-|\sigma^x(x,t)|^2} +\sigma^t(x,t) \ge \sigma^t (x,t) = (t-1)\Div q(x) +\lambda -1   \ge 0.
\end{align*}
Therefore,  in view of the duality Theorem \ref{duality2} ,   it holds
\begin{align*}
\beta(\lambda) \ge -\int_D \sigma^t(x,0)= (1-\lambda)|D| + \int_D \Div q \, dx=  (1-\lambda)|D| + P(D)   =\beta_0(\lambda) ,
\end{align*}
where: 
\begin{enumerate}
\item [-] for the first equality, we used the fact that $\int_D \Div q \, dx = \int_{\partial D} q\cdot \nu_D\, d\HH^{N-1}$
\item [-] for the last equality, we used the fact that $D$ is optimal for $m(\mu,D)$  for any $\mu\ge \l-1$, hence for $m(\l,D)$ in particular.
\end{enumerate}

We conclude  that $\beta(\lambda)=\beta_0(\lambda)$, that means $u\equiv 1$ solves $\beta(\lambda)$, whence $\lambda \le \lambda_1$.

\end{proof}

\begin{example}
Let $D=\{x\in\RR^2 :  |x| < R \}$ be a disk of radius $R$ in $\RR^2$. Recall that disks are self-Cheeger sets and that the Cheeger constant of a disk is completely determined by its radius, i.e. $h_D=2/R$. The unique solution to problem $\beta_0(\lambda)$ is $u_0\equiv 0$ for $\lambda < h_\Omega$ where $h_\Omega= \frac2{R}$ is the Cheeger constant of $\Omega$.
In contrast the unique solution is  $u_1\equiv 1$ for $\lambda>h_\Omega$. For the precise value $\lambda= h_\Omega$ we obtain exactly two solutions $u_0, u_1$. 
Accordingly,
\begin{align*}
\beta_0(\lambda) =
\begin{cases}
|D| &\tinm{if} \lambda \le h_D \\
P(D) + (1-\lambda)|D| &\tinm{if} \lambda >h_D.
\end{cases}
\end{align*}

Let us now turn to the determination of the infimum $\beta(\lambda)$. 
By a rearrangement argument, we can prove that solutions are all radial of the form $u(x) = \f(\frac{|x|}{R})$ being $\f(t):[0,1]\to [0,1]$ monotone non increasing.
The plateau $\{u=1\}$ is associated with an interval $t\in [0, \rho]$ for a suitable value of $\rho\in [0, 1]$ to be determined. 
For such a plateau, the minimal surface problem reads
\begin{align}\label{minsurf}
 J(\rho) :=\inf_{\substack{\f(\rho) = 1\\ \f(1) = 0}} I(\f)  \ ,\qquad I(\f) := R \int_\rho^1 \sqrt{R^2+\f'^2} \, tdt \ . 
\end{align}
The first integral of Euler equation for this minimization problem reads
\begin{align}
\frac{t\f'}{\sqrt{R^2 + \f'^2}} = \mu \label{int1Euler}
\end{align}
for some constant $\mu$. As $\f(1) = 0$, we are led to the explicit form 
\begin{align}
\f (t) = K(\mu,t)  \;,\qquad K(\mu,t) := \mu \, R \, \log \left( \frac{1 + \sqrt{1 - \mu^2}}{t + \sqrt{t^2 - \mu^2}} \right)\;,\label{sol_phi}
\end{align}
provided we can find $\mu \in [0,\rho]$ such that $\f(\rho)=K(\mu,\rho)=1$.
In fact, since the function $\mu \mapsto K(\mu,\rho)$ is stricly increasing on $[0, \rho]$,   such a $\mu$ exists and is unique and exists if and only if
\begin{align}
1 \,\le\,  K(\rho,\rho)= \rho\, R\, \log\left(\frac{1 + \sqrt{1 - \rho^2}}{\rho}\right). 
\label{exist_cond} 
\end{align}
If the inequality above is strict, then we obtain a unique  solution to \eqref{minsurf} which is smooth.  
In the limit case where \eqref{exist_cond} is an equality, one has $\mu=\rho$ and  $\f'(\rho)=+\infty$.
If $K(\rho,\rho)<1$, then  \eqref{minsurf} has no solution but the  relaxed solution in $BV(D)$ is unique and exhibits a jump at $t=\rho$
of amplitude $1- K(\rho,\rho)$.
In all cases, we have determined, in term of parameter  $\rho\in [0,1]$, an optimal radial configuration whose plateau $\{u=1\}$
agrees with the disk $B(0,\rho)$. Its total energy is given by
$$ E (\rho) :=  2\pi \, J(\rho) + (1-\lambda) \pi \rho^2 R^2 \ .$$
In order  to minimize $E(\rho)$ on interval $[0,1]$, we introduce 
\begin{align*}
\ov\mu(\rho):= \sup_{0\le \mu \le \rho} \{\mu : K(\mu,\rho) \le 1\}.
\end{align*}
It is easy to check that for every $\rho\in[0,1]$ such that if $K(\rho,\rho)< 1$ then $\ov\mu(\rho) =\rho$. Otherwise, $\ov\mu(\rho)$ is the unique solution of equation $K(\mu,\rho)=1$.
After a straightforward computation and exploiting \eqref{int1Euler}, we obtain
$$ J(\rho) = R^2\int_\rho^1 \frac{t^2}{\sqrt{t^2 - \ov\mu(\rho)^2}} dt + \rho R \Big(1- K(\ov\mu(\rho),\rho) \Big) \,.$$
Thus, noticing that $ K(\ov\mu(\rho),\rho)\le 1$, we are led to:
\begin{align}\label{Erho}
\scalebox{0.98}{$
E(\rho) = \pi R^2 \left( \sqrt{1 - \ov\mu(\rho)^2} - \rho\sqrt{\rho^2 - \ov\mu(\rho)^2} + \ov\mu(\rho)^2\log\frac{1+\sqrt{1 - \ov\mu(\rho)^2} }{\rho+\sqrt{\rho - \ov\mu(\rho)^2}} + \frac{2\rho \Big(1- K(\ov\mu(\rho),\rho) \Big) }{R}  + (1-\lambda)\rho^2   \right)
$}
\end{align}
Finally, we need to determine an optimal $\ov\rho$ for $\min\{ E(\rho)  : \rho \in [0,1]\}$.  
Then the  radial function $\ov u(x)=\ov \f(\frac{|x|}{R}) = K\left(\ov\mu(\ov\rho),\frac{|x|}{R}\right)$ defined in \eqref{sol_phi}  minimizes the relaxed problem \eqref{betarelax} . This solution is continuous if  $K(\ov\rho, \ov\rho)= 1$ and otherwise exhibits a jump of amplitude  $1-K(\ov\rho, \ov\rho)$ before reaching the value $1$ on the plateau. It turns out that the jump of $\ov u$ occurs when $\l$ passes the value $1$, thus confirming the behavior predicted in Remark \ref{slope}. This is illustrated in   Figure \ref{optU} 
 \begin{figure}[H]
\centering
\includegraphics[scale=1.]{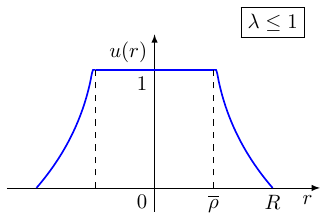}
~\hspace{2.em}
\includegraphics[scale=1.]{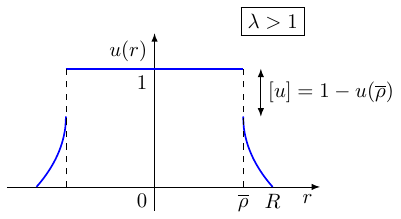}
\caption{Illustration for an optimal $u$ and optimality conditions.} \label{optU}
\end{figure}

The minimization of $E(\rho)$ is performed by using Matlab for different values of $R$ and $\lambda$. As predicted by Theorem \ref{beta<beta0},  $\beta_0(\lambda)$ coincides with $\beta(\lambda)$ outside the interval $(\lambda_0,\lambda_1)$ where their common minimizers are trivial characteristic functions either $\ov u \equiv 0$ ($\lambda \le \lambda_0$) or $\ov u \equiv 1$ ($\lambda \ge \lambda_1$). When $\lambda_0 < \lambda_1$, the strict inequality $\beta(\lambda) < \beta_0(\lambda)$ occurs for any $\l\in (\l_0,\l_1)$ and  the minimizer $\ov u$ for problem $\beta(\lambda)$ provides a true minimal surface with a possible jump under the plateau 
$\{\ov u=1\}$. The numerically computed critical values  $\lambda_0$, $\lambda_1$ are represented in term of $R$ in Figure \ref{figlambda}.
  We observe that $\l_0=\l_1$ for $R\le 1$ whereas the strict inequality $\l_0<\l_1$ occurs for all $R>1$.
      Since $D$ is a disk, we have $\theta_D=1$ and $\lambda_D = h_D = 2/R$. Then the bounds provided 
in Proposition \ref{estim2} become:
\begin{align*}
\lambda_0^*\le \lambda_0 \le \frac{2}{R} \le \lambda_1 \le 1+ \frac{2}{R}\quad\text{where}\quad \lambda_0^* =
\begin{cases}
1-\cos\pare{\frac{2}{R}} &\text{if } R \ge \frac{4}{\pi},\\
1+ \frac{2}{R} - \frac{\pi}{2} &\text{if } R < \frac{4}{\pi}.
\end{cases}
\end{align*}
In Figure \ref{figlambda}, these bounds are represented as functions of $R$. The curve representing  $\lambda_1$ suggests the following exact value
$ \lambda_1 = \max \aco{\frac{2}{R}, 1+ \frac{1}{R} }.$
\begin{figure}[H]
\centering
\includegraphics[scale=1.]{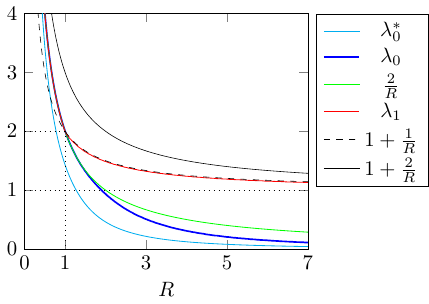}
\caption{Critical values of $\lambda$ in term of the radius $R$ of a disk  $D\subset\RR^2$.}\label{figlambda}
\end{figure}
Next we display in  Figure \ref{figRle1} (case $R\le 1$) and in Figure \ref{figRge1} (case $R>1$), the dependence upon $\l$ of
$\beta_0(\lambda), \beta(\l)$, of 
the jump $[u]$ of the radial solution and of the ratio $\frac{\ov \rho}{R}$ being $\ov \rho$ the radius of the plateau.

\begin{figure}[H]
\centering
\includegraphics[scale=0.8]{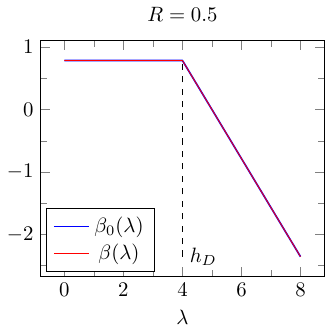}
\includegraphics[scale=0.8]{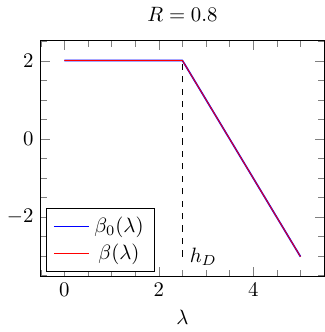}
\includegraphics[scale=0.8]{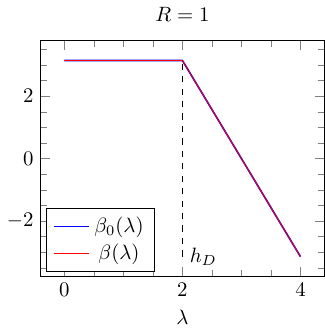}\\
\includegraphics[scale=0.8]{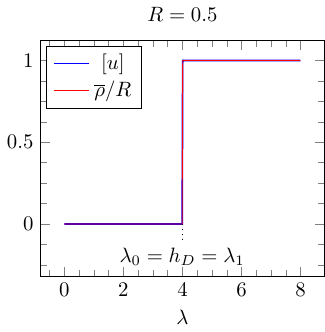}
\includegraphics[scale=0.8]{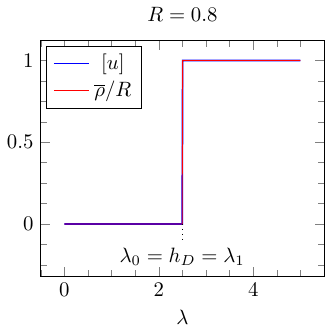}
\includegraphics[scale=0.8]{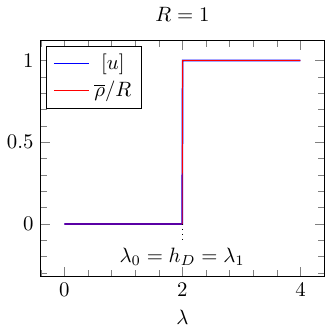}
\caption{Dependence in $\l$ for  $R \in \{0.5, 0.8, 1\}$. }\label{figRle1}
\end{figure}

\begin{figure}[H]
\centering
\includegraphics[scale=0.85]{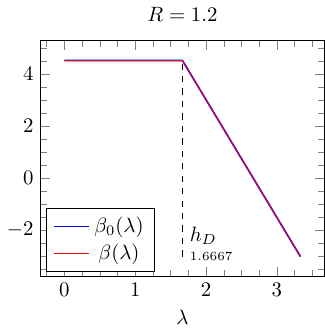}
\includegraphics[scale=0.85]{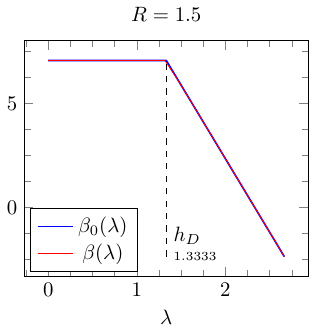}
\includegraphics[scale=0.85]{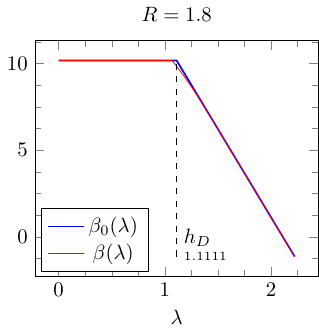}\\
\includegraphics[scale=0.85]{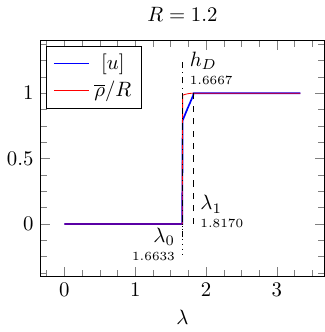}
\includegraphics[scale=0.85]{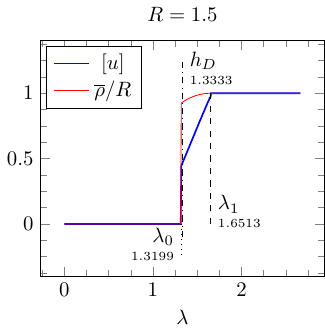}
\includegraphics[scale=0.85]{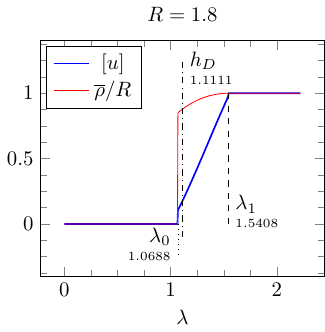}\\
\includegraphics[scale=0.85]{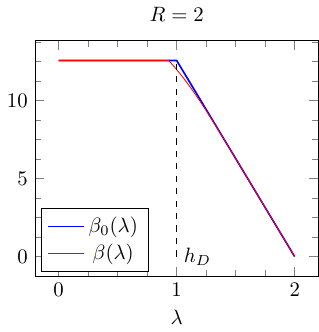}
\includegraphics[scale=0.85]{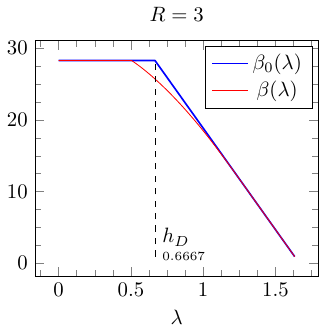}
\includegraphics[scale=0.85]{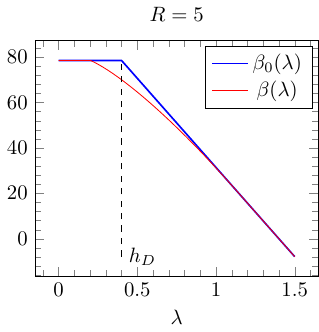}\\
\includegraphics[scale=0.85]{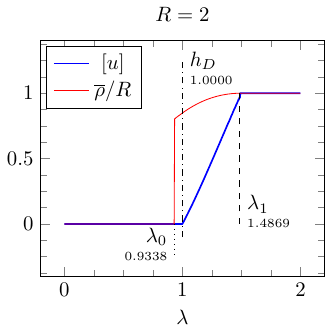}
\includegraphics[scale=0.85]{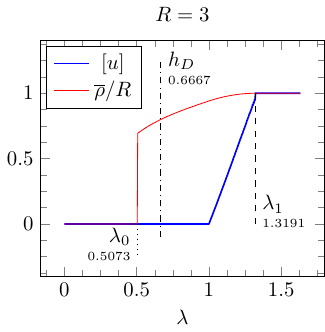}
\includegraphics[scale=0.85]{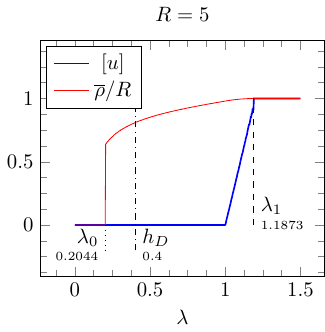}
\caption{Dependence in $\l$ for  $R \in \{1.2, 1.5, 1.8, 2,3,5\}$. }\label{figRge1}
\end{figure}

\end{example}

\medskip

\section{Cut-locus potential and construction of a two-dimensional calibration}
\label{scutlocus}

\markright{Cut-locus potential. An explicit construction of 2D calibrating fields}

\bigskip
In this section, we assume that\emph{ $D$ is a bounded convex open subset of $\RR^2$.}
The following  notations will be used:
\begin{enumerate}[leftmargin=6mm]
\item[-] $D^c$ denotes the complement of $D$ in $\R^2$.
\item[-] $B(x,r)$ denotes the open ball in $\RR^2$ centered at $x$ with radius $r>0$;
\item[-]  For two points $x,y\in \RR^2$, the segment joining them is denoted by $[x,y]:= \{ (1-t)x + ty \sepa t\in[0,1] \}$ ;
\item[-] For every  subset $A\subset\RR^2$, we write $A^c$ for the complement; if $A$ is non empty, $d(x,A)$ denotes the Euclidean distance from $x$ to $A$, namely
$d(x,A):= \inf \aco{ |x- y| \sepa y\in A}$; 
\item[-] If $A$ is a closed subset of $\RR^2$, we denote $\Pi_A(x):=\{y\in A \sepa |x-y| =d(x,A)\}$. In case $A$ is convex, $\Pi_A(x)$ is a singleton.

\item[-] The distance between two non empty subsets $A,B\subset \RR^2$ is given by 
$$d(A,B):=\inf \aco{ d(x,y) \sepa (x,y)\in  A\times B } .$$
\end{enumerate}

\subsection*{Normal cone to $D$ and regular points of $\partial D$.}\ Since $D$ is convex, the Gauss map $\n:\partial D^\delta \to S^1$ which assigns to  $x$   its normal $\nu_D(x)$ is well defined except in an at most countable number of points where we have left and right limits  denoted by $\nu_D^-(x)$ and $\nu_D^+(x)$. This singular set will be denoted $\partial_s D$ while its complement $\partial_r D$ is the set of regular points. 

The normal cone to $D$ at $x \in \ov D$ is defined by  
$$ N_{D}(x):= \aco{x^*\in\RR^2 \sepa \ps{x^*,y-x} \le 0,\; \forall y\in D }.$$
It reduces to $\{0\}$ if $x$ is inside $D$, while if $x\in \partial D$, it is generated by the unit vectors $\nu_D^-(x)$ and $\nu_D^+(x)$, that is $N_D(x):=\aco{ s\,\nu_D^+(x) + t\,\nu_D^-(x) \sepa s,t \in \RR_+ }$.
If $x\in\partial_r D$, then $\nu_D(x) =\nu_D^+(x) =\nu_D^-(x)$ and $N_D(x)= \RR_+ \nu_{D}(x)$ is a single positive ray.

\subsection*{Some important convex subsets of $D$. }
Let $R_D:= \max_{x\in D} d(x, D^c)$ be the inradius of $D$. For every $\delta$ such that $0\le \delta<R_D$, we introduce the set
\begin{align*}
D^\delta:= \{x\in D\ :\ d(x, D^c) >\delta \}.
\end{align*}
One checks easily that $D^\delta$ is a non void open convex subset 
\footnote{For any $x_1,x_2\in D^\delta$, we have $B(x_1,\delta), B(x_2,\delta)\subset D$. Thus, since $D$ is convex
\begin{align*}
B(\frac{x_1 + x_2}{2},\delta)= \frac1{2} B(x_1,\delta) + \frac1{2} B(x_2,\delta)\subset \frac1{2} (D+D)  =D \implies \frac{x_1 +x_2}{2} \in D^\delta.
\end{align*}
}. 
Acoordingly, we will denote by $\Pi_\delta(x)$ the orthogonal projection on $\ov {D\delta}$ of any $x\in\R^2$. 

\med
Next, for every $\lambda \ge R_D^{-1}$, we consider the $\delta$-enlargement of $D^\delta$ when $\delta= \l^{-1}$.  This  convex open subset of $D$ will play a crucial role in what follows. It is given by the union of all balls of radius $\lambda^{-1}$ contained in $D$, namely
\begin{align}\label{def:Olambda}
\Omega_\lambda = \bigcup_{ B(x,\lambda^{-1}) \subset D} B(x,\lambda^{-1}).
\end{align}
For $\delta= R_D$, we obtain the \emph {maximal balls inscribed subset} of $D$ defined by:   
$$U_D:=\Omega_{\frac{1}{R_D}} \ =\  \{ x\in D \ :\ d(x, \Sigma_D) < R_D \} ,$$
 being  $\Sigma_D :=\{ x\in D\ :\ d(x, D^c) = R_D)\}$ (the \emph{ high ridge} of $D$).

\medskip
In this Section, our goal  is to show that, for every $\l\in [h_D,+\infty)$, the set $\O_\l$ defined in \eqref{def:Olambda} is calibrable and optimal for $m(\l,D)$.
To that aim, we will use a geometrical construction for defining a locally Lipschitz potential $\rho: \ov D \to (0,+\infty)$ which has its own interest. 
Before that let us recall the two-dimensional construction of  the Cheeger set of a convex set $D$ following \cite[Theorem 3.32 i)]{stredulinsky1997area} and the celebrated result of Bernd Kawohl and Thomas Lachand-Robert:
\begin{theorem}[\cite{kawohl2006characterization}, Theorem 1]\label{chaCheeger2}
There exists a unique value $\delta=\delta^*$ such that $|D^\delta| = \pi\delta^2$. Then, $h_D=1/\delta^*$ and the Cheeger set of $D$ is $C_D = \cup \{B(x,\delta^*):B(x,\delta^*) \subset D\}$.
\end{theorem}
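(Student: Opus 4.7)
\medskip
\textbf{Plan.} Since the statement is the classical Kawohl--Lachand-Robert theorem, the plan is a three-step argument culminating in a calibration construction that dovetails with the cut-locus potential developed in the sequel.

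\textbf{Step 1 (Existence and uniqueness of $\delta^*$).} The map $\delta \mapsto |D^\delta|$ is continuous on $[0, R_D]$ with $|D^0| = |D|$ and $|D^{R_D}| = 0$. It is strictly decreasing: for $0 \le \delta < \delta' < R_D$, the open set $\{x \in D : \delta < d(x, D^c) < \delta'\}$ is nonempty --- along any segment from an inradius point to $\partial D$ the distance to $D^c$ decreases continuously from $R_D$ to $0$ and hence takes every value in $(\delta, \delta')$ --- and therefore has positive area. Since $\pi \delta^2$ is continuous and strictly increasing from $0$ to $\pi R_D^2 > 0$, the difference $\varphi(\delta) := |D^\delta| - \pi \delta^2$ is continuous and strictly decreasing with $\varphi(0) > 0 > \varphi(R_D)$, so it vanishes at a unique $\delta^* \in (0, R_D)$.

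\textbf{Step 2 (Upper bound).} Set $C := \bigcup \{B(x, \delta^*) : B(x, \delta^*) \subset D\}$. Since $B(x, \delta^*) \subset D$ iff $x \in D^{\delta^*}$, one has $C = D^{\delta^*} \oplus B(0, \delta^*)$. As $D^{\delta^*}$ is convex in $\RR^2$, the planar Steiner formulas yield
\[
|C| = |D^{\delta^*}| + \delta^* P(D^{\delta^*}) + \pi (\delta^*)^2, \qquad P(C) = P(D^{\delta^*}) + 2\pi \delta^*.
\]
Substituting $|D^{\delta^*}| = \pi(\delta^*)^2$ gives $|C| = \delta^* \bigl( P(D^{\delta^*}) + 2\pi \delta^* \bigr) = \delta^* P(C)$, whence $P(C)/|C| = 1/\delta^*$ and $h_D \le 1/\delta^*$.

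\textbf{Step 3 (Lower bound by calibration --- the main obstacle).} To conclude $h_D = 1/\delta^*$ and that $C$ is the maximal Cheeger set, it suffices by Theorem \ref{duality2} to exhibit a field $\bar q \in L^\infty(D; \RR^2)$ satisfying $|\bar q| \le 1$ and $0 \le \div \bar q \le 1/\delta^*$ a.e.\ in $D$, together with $\bar q \cdot \nu_C = 1$ on $\partial C$ and $\div \bar q = 1/\delta^*$ a.e.\ on $D \setminus C$; any such $\bar q$ certifies via \eqref{opt1}--\eqref{opt2} that $C$ solves $m(1/\delta^*, D)$, forcing $P(\Omega) - |\Omega|/\delta^* \ge 0$ for every $\Omega \subset D$. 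The obstacle is that the naive choice $\bar q = \nabla d(\cdot, C)$ on $D \setminus C$ produces a divergence of the form $\kappa/(1 + t \kappa)$ depending on the local curvature $\kappa$ of $\partial C$, and not the constant $1/\delta^*$ required. This is precisely what the cut-locus potential $\rho : \overline D \to (0, R_D]$ constructed in the remainder of Section \ref{scutlocus} resolves: via the master PDE \eqref{masterpde}, the field built from $\nabla \rho / |\nabla \rho|$ (with the appropriate sign and extension through the central set $U_D$) yields calibrations for every level $\lambda \ge h_D$ simultaneously, and Theorem \ref{chaCheeger2} arises as the special case $\lambda = h_D$, $C = C_D = \Omega_{1/\delta^*}$.
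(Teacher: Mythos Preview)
The paper does not prove this theorem: it is quoted verbatim from Kawohl--Lachand-Robert and used as an input to Section~\ref{scutlocus}. So there is no ``paper's own proof'' to compare against, and the relevant question is whether your argument stands on its own.

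Your Steps~1 and~2 are correct and standard. Step~3, however, is circular. The calibration you invoke is the one built in Corollary~\ref{lemcalibr2} via formula~\eqref{calib-Olambda}: on $\Omega_{h_D}$ it equals the abstract Cheeger calibration $q_{h_D}$, and on the annulus $\Omega_\lambda\setminus\overline{\Omega_{h_D}}$ it equals $-\nabla\rho/|\nabla\rho|$. But $q_{h_D}$ is, by definition, a field living on the Cheeger set $C_D$ (its existence comes from Remark~\ref{calib-Cheeger}, with no geometric information attached). Gluing it onto $\Omega_{h_D}$ only makes sense once you know $\Omega_{h_D}=C_D$ --- which is precisely the content of Theorem~\ref{chaCheeger2}. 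Put differently, the phrase ``extension through the central set $U_D$'' hides exactly the part of the construction that the cut-locus potential does \emph{not} supply: on $\overline{U_D}$ one has $\nabla\rho=0$, and the paper fills this hole by borrowing $q_{h_D}$, whose domain is only known to be $\Omega_{h_D}$ \emph{after} Theorem~\ref{chaCheeger2}. Note also that Corollary~\ref{opti-mlD} is stated for $\lambda>h_D$ strictly, so the ``special case $\lambda=h_D$'' you appeal to is not even covered there; at $\lambda=h_D$ the annulus collapses and the construction reduces to $q_{h_D}$ alone, yielding no new information.

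To rescue Step~3 without circularity you would need an independent calibration of $U_D$ (or of $C=\Omega_{1/\delta^*}$) with divergence in $[0,1/\delta^*]$ and outward normal trace~$1$, built directly from the geometry of $D$ rather than from the abstract existence of $q_{h_D}$. That is essentially the content of the original Kawohl--Lachand-Robert argument and is not a byproduct of Section~\ref{scutlocus}.
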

The latter result says that $D$  admits a unique Cheeger set given by $C_D=\O_{h_D}$. Moreover,  the $1$-calibrability of $D$  (as defined in Subsection \ref{calibrable}) is equivalent to the fact that $D=\O_{h_D}$
(hence, by Corollary \ref{theta-min}, to the curvature upper bound $\kappa_\infty(\partial\O)\le h_D$). 

\subsection{Cut-locus potential}
\label{sec_cutlocus}

%

We introduce the function $\rho=  \overline{D} \to \RR_+$ defined by
\begin{align}\label{def:rho}
\rho_D(x) := \sup\{ \delta\ge 0 \ : \ d(x,D^\delta) \le \delta\}.
\end{align}
Since this potential $\rho_D$ will not be used for another domain, we will simply write $\rho_D$ as $\rho$. 
A good reason to call it \emph{cut-locus potential} is that $\rho$ is a continuous extension of  the normal distance to the cut-locus of $D$ 
 defined on $\partial D$ by
\begin{align}\label{cut-dis} 
\tau(x):= \begin{cases}
\sup \{t\ge 0 \sepa  x= \Pi_{\partial D} (x-t \nu_D(x))  \}\quad & \text{ if $x\in \partial_r D$} \\
0 & \text{ if $x\in \partial_s D$}
\end{cases}
\end{align} 
The cut-locus of $D$ is the closure of the  singular set $\Lambda_D$ of $d(\cdot,D^c)$ 
that is the set of point $x\in D$ where $d(\cdot, D^c)$ is not differentiable, that is
$$\Lambda_D:=\{x\in D \sepa \Pi_{D^c}(x) \ \text {is not a singleton}\}.$$ 
 We refer to \cite{crasta2016cutlocus, crasta2016characterization, crasta2017geometric, crasta2015dirichlet} for more details on this notion.

\medskip
As it is proved in  Lemma \ref{alpha}, for every  $x\notin \ov{U_D}$, we have the equality
 $$\{\delta\ge 0 \sepa d(x,D^\delta)\le \delta \} = [0,\rho(x)].$$
This property is illustrated  in Figure \ref{falpha}, where
 the function  $\delta \to \alpha(x,\delta) := d(x,D^\delta) - \delta$ is  negative on the interval $(0,\gamma(x)]$ and then
 is stricly increasing reaching a positive value at $x=R$.
 Therefore  $\rho(x)$ is characterized as the unique zero of $\alpha(x,\cdot)$ on $(0,R)$ if $x\notin \ov{U_D}$.
The lower-bound inequality $\rho(x) \ge \gamma(x)$ which is is strict if $x\in D$  involves  the
distance from $\partial D$ to the cut-locus $\Lambda_D$  along the normal to $\partial D$ passing through $x$ , precisely:
 \begin{align}\label{ndis}
\gamma : x\in \ov D \mapsto  \zeta(x)+ d(x,D^c)\ ,\ \zeta(x):=
\begin{cases}
\min\{t\ge 0: x + t \nabla d(x,D^c) \in \ov\Lambda_D \} \ &\forall x\notin  \Lambda_D\\
0 &\forall x\in \Lambda_D
\end{cases}
\end{align}
Note , in particular, that  $\zeta=0$ and $\gamma(x)= d(x,D^c)$ for every $x\in \ov \Lambda_D$. On the other hand,  we infer from \eqref{cut-dis}, that 
\begin{equation}\label{gamma=tau}
\gamma(x)=\, \zeta(x)=\, \tau(x) \quad \forall x\in \partial D.
\end{equation}
 Summarizing , our potential $\rho$ can be characterized as follows:
\begin{equation}\label{master-rho}
 \rho(x) = d(x, D^{\rho(x)}) \ \text{if $x\in D\setminus \ov{U_D}$}\ ,\
 \rho(x)= \tau(x)\ \text{if $x\in \partial D$}\ ,\   \rho(x) = R_D \ \text{if $x\in\ov{U_D}$}
\end{equation}

\begin{figure}[H]
\centering
\includegraphics[scale=1.]{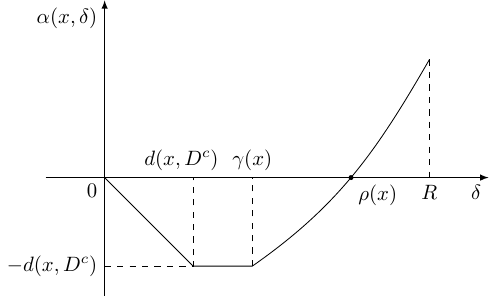}
\caption{Function $\alpha(x,\cdot)$ for $x\notin \ov{U_D}$.} \label{falpha}
\end{figure}

\begin{lemma}\label{lemrho1}
The potential $\rho$  reachs its maximum $R_D$ on the plateau $\ov{U_D}$ and satisfies the inequality
$\rho(x)\ge d(x,D^c)$.
Moreover, recalling the definition of $\O_\l$ in  \eqref{def:Olambda},  we have   the equality:
\begin{align} \Omega_\lambda = \aco{x\in D \sepa \rho(x) > \frac1{\l}}\ ,\ \text{for every $ \l \ge \frac1{R_D}$} .\label{equi2}
\end{align}

\end{lemma}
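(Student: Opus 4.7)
The plan is to prove the two assertions separately: the pointwise bounds and the plateau value on $\overline{U_D}$ first, and then the identity \eqref{equi2} via a clean geometric characterization of the superlevel sets of $\rho$ combined with the concavity of $x\mapsto d(x,D^c)$ on the convex set $D$. (The borderline value $\lambda=R_D^{-1}$ reduces to the defining equality $U_D=\Omega_{1/R_D}$, so the substantive content is for $\lambda>R_D^{-1}$.)

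The bound $\rho(x)\le R_D$ follows because for $\delta>R_D$ no ball of radius $\delta$ fits inside $D$, so $D^\delta=\emptyset$ and $d(x,D^\delta)=+\infty$, which disqualifies such $\delta$ from the supremum in \eqref{def:rho}. The bound $\rho(x)\ge d(x,D^c)$ is obtained by noting that any $\delta<d(x,D^c)$ puts $x\in D^\delta$, whence $d(x,D^\delta)=0\le\delta$. For the plateau $\rho\equiv R_D$ on $\overline{U_D}$, given $x\in U_D$ I would pick $y\in\Sigma_D$ with $|x-y|<R_D$ so that $B(y,R_D)\subset D$; for $\delta\in(R_D-|x-y|,R_D)$ and small $\e>0$ the point
\[
z_\e\ :=\ y+(R_D-\delta-\e)\,\frac{x-y}{|x-y|}
\]
lies in $D^\delta$ (because $B(z_\e,\delta+\e)\subset B(y,R_D)\subset D$) and satisfies $|x-z_\e|=|x-y|-(R_D-\delta-\e)<\delta$, giving $d(x,D^\delta)\le\delta$ and $\rho(x)\ge\delta$; letting $\delta\to R_D^-$ yields $\rho(x)=R_D$, and the boundary case $x\in\overline{U_D}\setminus U_D$ is recovered by taking $|x-y|=R_D$ and exploiting the bound $|x-z_\e|\le\delta+\e$ as $\e\to 0^+$.

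The key technical preliminary for \eqref{equi2} is the equivalence
\[
\rho(x)\ge r\ \Longleftrightarrow\ \exists\, y\in\overline D\text{ with }d(y,D^c)\ge r\text{ and }|x-y|\le r,
\]
whose $(\Leftarrow)$ direction is immediate (for $\delta\in[|x-y|,r)$, $y\in D^\delta$ forces $d(x,D^\delta)\le|x-y|\le\delta$), and whose $(\Rightarrow)$ direction follows by taking, for each $\delta<r$, near-optimizers $y_\delta\in\overline{D^\delta}$ with $|x-y_\delta|\le\delta$, extracting a subsequential limit along $\delta\nearrow r$, and using closedness of the inner parallel set $\{d(\cdot,D^c)\ge r\}$.

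With this in hand, I derive \eqref{equi2} using the concavity of $d(\cdot,D^c)$ on the convex $D$. For $\Omega_\lambda\subseteq\{\rho>\lambda^{-1}\}$: if $x\in B(y,\lambda^{-1})\subset D$ with $d(y,D^c)>\lambda^{-1}$, the characterization with $r=d(y,D^c)$ yields $\rho(x)>\lambda^{-1}$; in the degenerate case $d(y,D^c)=\lambda^{-1}$ I push $y$ along the segment to any $\xi\in\Sigma_D$, so that concavity gives $d((1-t)y+t\xi,D^c)\ge(1-t)\lambda^{-1}+tR_D>\lambda^{-1}$ for small $t>0$ (this is where $\lambda>R_D^{-1}$ is used), while openness of $B(y,\lambda^{-1})$ ensures $|x-y_t|<\lambda^{-1}$ still holds. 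For the reverse inclusion $\{\rho>\lambda^{-1}\}\subseteq\Omega_\lambda$, the characterization furnishes $r>\lambda^{-1}$ and $y$ with $d(y,D^c)\ge r$, $|x-y|\le r$; if $|x-y|<\lambda^{-1}$ then $x\in B(y,\lambda^{-1})\subset D$ directly, and otherwise I slide $y_t=(1-t)y+tx$ and use $d(y_t,D^c)\ge(1-t)r+td(x,D^c)$ together with $|x-y_t|=(1-t)|x-y|$ to select $t$ for which $d(y_t,D^c)\ge\lambda^{-1}$ and $|x-y_t|<\lambda^{-1}$ simultaneously hold; this is feasible precisely because $x\in D$ gives $d(x,D^c)>0$ and because $r\lambda>1$ opens up a non-empty range of admissible $t$. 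The main obstacle is exactly this simultaneous tuning of the two constraints at the degenerate boundary of $\{d(\cdot,D^c)\ge\lambda^{-1}\}$, and it is at this step that both the convexity of $D$ and the strict inequality $\lambda>R_D^{-1}$ play an essential role.
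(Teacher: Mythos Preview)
Your argument is correct in substance and takes a genuinely different route from the paper. The paper's proof of \eqref{equi2} is essentially a one-liner that invokes the appendix Lemma~\ref{alpha}, whose detailed description of $\alpha(x,\delta)=d(x,D^\delta)-\delta$ shows that the set $\{\delta:d(x,D^\delta)\le\delta\}$ is exactly the interval $[0,\rho(x)]$; once this is known, both inclusions in \eqref{equi2} fall out of the reformulation $\Omega_\lambda=\{x:d(x,D^{1/\lambda})<1/\lambda\}$. You instead bypass that structural lemma entirely by establishing the geometric characterization $\rho(x)\ge r\iff\exists\,y$ with $d(y,D^c)\ge r$ and $|x-y|\le r$, and then handling both inclusions via sliding arguments along segments, exploiting the concavity of $d(\cdot,D^c)$ on the convex $D$. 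This is more self-contained and makes transparent exactly where convexity enters, at the price of a longer case analysis.

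One point deserves tightening. In your $(\Rightarrow)$ sketch you write ``for each $\delta<r$, near-optimizers $y_\delta\in\overline{D^\delta}$ with $|x-y_\delta|\le\delta$'', but the existence of such $y_\delta$ for \emph{every} $\delta<r$ is precisely the down-closedness of $\{\delta:d(x,D^\delta)\le\delta\}$, which is not immediate from the definition of $\rho$ as a supremum (in the paper it is the content of Lemma~\ref{stmonotone}(ii)). The cleanest fix is to apply your characterization only at $r=\rho(x)$: taking $\delta_n\nearrow\rho(x)$ inside the defining set is then legitimate by the very meaning of the supremum, and since $\rho(x)>\lambda^{-1}$ this value of $r$ is all you need for the reverse inclusion. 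Alternatively you can prove down-closedness directly by the same sliding device you already use: given a witness $y$ for level $\delta$, the point $y_t=(1-t)y+tx$ with $(1-t)\delta=\delta'$ satisfies $d(y_t,D^c)\ge(1-t)\delta+t\,d(x,D^c)\ge\delta'$ and $|x-y_t|\le\delta'$, furnishing a witness for any $\delta'<\delta$.
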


\begin{proof} For $x\in D$, we have the implication $0<\delta \le d(x,D^c) \implies   d(x,D^\delta) \le \delta \le R_D $.
It follows that
$d(x,D^c) \le \rho(x) \le R_D$. Moreover, $\rho(x) = R_D$ for every $x\in U_D=\Omega_{\frac {1}{R_D}}$.
We now prove \eqref{equi2}. Let us recall that $\O_\l$ given by \eqref{def:Olambda} coincides with an enlargement of $D^\delta$, namely 
$$\O_\l= \{x\in D \, :\, d(x, D^\delta)<\delta\} \quad \text{where $\delta = \l^{-1}$} .$$ 
Therefore, by the definition of $\rho$ in \eqref{def:rho},  we have  $\rho(x) >\l^{-1} $ for any $x\in \O_\l$. Conversely, assume that $\rho(x) >\l^{-1} $. Then, as we know from 
Lemma \ref{alpha}, it holds  $d(x, D^\delta)<\delta$ for any $\delta \in [0,\rho(x)]$, hence in particular for $\delta=\l^{-1}$, whence $x\in \O_\l$.

\end{proof}

\begin{remark}\label{rhoAlp}
By Lemma \ref{lemrho1}, it follows that the level sets of $\rho$  given by $C^\delta:=\{x\in D \sepa \rho(x) =\delta\}=\partial\Omega_{\frac{1}{\delta}}\cap D$ will give a partition to $D$, namely
\begin{align*}
D = \bigcup_{0<\delta \le R_D} C^\delta .
\end{align*}
Note that, for $\delta \in (0,R_D)$,  the sets  $C^\delta$  are arcs of radius $\delta$, while for $\delta=R_D$, we obtain 
 $C^{R_D} = U_D =\O_{\frac1{R_D}}$ which is a convex open subset of $D$.  
\end{remark}

\begin{proposition} \label{Lip-rho} The cut-locus potential $\rho$ is continuous on $\ov D$ and locally Lipschitz in $D$  (its gradient blows-up as $d(x, D^c)\to 0$). Moreover its trace on $\partial D$ satisfies
\begin{align}\label{rho-partialD}
\rho(x) \ =\ \tau(x)\ \le \frac1{\kappa_{\partial D}(x)} \quad \text{for all $x\in \partial D$}.
\end{align}
\end{proposition}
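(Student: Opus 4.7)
The proof splits into three parts: continuity on $\ov D$, local Lipschitz regularity on $D$ with gradient blow-up near $\partial D$, and the boundary identification $\rho(x)=\tau(x)\le 1/\kappa_{\partial D}(x)$. The plan relies on two tools already at hand: the implicit characterization recorded in \eqref{master-rho}, and the ball characterization from Lemma \ref{lemrho1}, which can be recast as
\begin{equation*}
\rho(x)\ =\ \sup\bigl\{\delta\ge 0\ :\ \exists z\in D,\ B(z,\delta)\subset D\ \text{and}\ |x-z|\le\delta\bigr\}.
\end{equation*}

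For continuity, I first observe that $\alpha(x,\delta):=d(x,D^\delta)-\delta$ is jointly continuous on $\ov D\times[0,R_D)$, because $y\mapsto d(y,A)$ is $1$-Lipschitz and the parallel sets $\{\ov{D^\delta}\}$ vary Lipschitz in Hausdorff distance via the Minkowski erosion identity $D^{\delta'}=D^\delta\ominus B(0,\delta'-\delta)$ valid for convex sets. Lower semicontinuity of $\rho$ is immediate from its sup definition: if $\rho(x_n)\ge\beta$ and $x_n\to x$, then $d(x_n,D^\beta)\le\beta$ passes to the limit yielding $\rho(x)\ge\beta$. For upper semicontinuity I split cases: at $x\in\ov{U_D}$ the trivial bound $\rho\le R_D=\rho(x)$ combined with lsc gives continuity, while at $x\notin\ov{U_D}$ one has $x_n\notin\ov{U_D}$ for large $n$, and any subsequential limit $\rho^*$ of $\rho(x_n)$ satisfies $\alpha(x,\rho^*)=0$ by continuity and $\rho^*\ge\rho(x)$ by lsc. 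Since $\alpha(x,\cdot)$ is strictly increasing past $\gamma(x)$ on $[\gamma(x),R_D)$ by Lemma \ref{alpha} and vanishes at $\rho(x)$, this forces $\rho^*=\rho(x)$.

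The local Lipschitz regularity on $D$ follows from a direct geometric construction based on the ball characterization. For $x\in D$, a compactness argument produces $(z_x,\delta_x=\rho(x))$ with $B(z_x,\delta_x)\subset D$ and $|x-z_x|=\delta_x$. Given $y\in D$ close to $x$: if $y\in \ov{B(z_x,\delta_x)}$ the same ball witnesses $\rho(y)\ge\rho(x)$; otherwise, slide the centre toward $y$ by setting $z_y:=(1-t)z_x+t\,y$. By concavity of $d(\cdot,D^c)$ on the convex set $D$, one has $d(z_y,D^c)\ge(1-t)\delta_x+t\,d(y,D^c)$, while $|y-z_y|=(1-t)|y-z_x|$. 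Choosing $t$ as the minimal value making $B(z_y,|y-z_y|)\subset D$ admissible (equivalently, equalising the two previous quantities), a short algebraic computation yields
\begin{equation*}
|\rho(x)-\rho(y)|\ \le\ \frac{R_D\,|x-y|}{\min\bigl(d(x,D^c),\,d(y,D^c)\bigr)},
\end{equation*}
whence $\rho$ is locally Lipschitz on $D$ with constant $O(1/d(x,D^c))$, which simultaneously establishes the announced blow-up of $\nabla\rho$ as $x\to\partial D$.

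Finally, the equality $\rho(x)=\tau(x)$ on $\partial D$ is a direct consequence of the master formula \eqref{master-rho}, the continuity proved above ensuring the trace is taken pointwise. For the curvature bound $\tau(x)\le 1/\kappa_{\partial D}(x)$ at a regular point $x\in\partial_r D$ where $\partial D$ admits a curvature, I parametrize $\partial D$ near $x$ by arclength $s\mapsto \Gamma(s)$ with $\Gamma(0)=x$ and $\Gamma''(0)=-\kappa_{\partial D}(x)\,\nu_D(x)$, and expand
\begin{equation*}
\bigl|(x-t\nu_D(x))-\Gamma(s)\bigr|^2\ =\ t^2+s^2\bigl(1-\kappa_{\partial D}(x)\,t\bigr)+O(s^3).
\end{equation*}
For $t>1/\kappa_{\partial D}(x)$ the $s^2$-coefficient is negative, so $s=0$ cannot be a local minimum of $s\mapsto|(x-t\nu_D(x))-\Gamma(s)|$; this forces $\Pi_{\partial D}(x-t\nu_D(x))\ne x$, hence $\tau(x)\le 1/\kappa_{\partial D}(x)$ by the very definition of $\tau$. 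The most delicate step is the Lipschitz estimate: carefully verifying that the constructed $(z_y,\delta_y)$ is a genuine competitor in the sup defining $\rho(y)$ in all configurations (in particular when $d(y,D^c)$ already exceeds $\delta_x$, in which case $y$ itself is the natural centre) requires a case analysis that I have only sketched above.
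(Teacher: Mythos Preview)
Your proof is correct and the overall architecture (lsc from the sup-definition, usc via the unique zero of $\alpha(x,\cdot)$ from Lemma~\ref{alpha}, boundary identity from \eqref{master-rho}) matches the paper. The substantive difference lies in the local Lipschitz estimate. The paper fixes $x\in\partial\Omega_{\delta^{-1}}\cap D$, places an auxiliary point $z$ on the normal ray at distance $r=d(x,D^c)$, and uses a Thales-type homothety of tangent circles through $z$ to produce, for every $\delta'<\delta$, a ball $B(y,\delta')\subset D$ with $d(x,D^{\delta'})\le\delta'-r(1-\delta'/\delta)$; this yields the bound $|\rho(x)-\rho(x')|\le \frac{R_D}{d(x,D^c)}|x-x'|$. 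Your argument instead exploits the concavity of $d(\cdot,D^c)$ on the convex set $D$ to slide centres along the segment $[z_x,y]$, which is both shorter and dimension-free (the paper's construction is genuinely planar). The resulting constants are the same order, $R_D/d(x,D^c)$, so both approaches capture the blow-up equally well.

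Two minor points. First, your compactness argument produces $|x-z_x|\le\delta_x$ rather than equality (think of $x\in U_D$); this is harmless since $\le$ is all you use downstream. Second, for the curvature bound the paper argues more simply: by definition of $\tau(x)$ the ball $B(x-\tau(x)\nu_D(x),\tau(x))$ lies in $D$ and touches $\partial D$ at $x$, which forces $\kappa_{\partial D}(x)\le 1/\tau(x)$ by elementary comparison of an interior tangent disk with the boundary. Your Taylor expansion reaches the same conclusion but presupposes a $C^2$ arclength parametrisation near $x$, which is a slightly stronger hypothesis than the mere existence of $\kappa_{\partial D}(x)$ at a convex boundary point.
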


\begin{proof}
Firstly, we prove that $\rho$ is locally Lipschitz (hence continuous) in $D$.
Given $\delta>0$, for every $x\in \Omega_{\delta^{-1}}\cap D$, we have
\begin{align*}
d(x,D^c) >0  \qquad \tinm{and} \qquad \delta = |x-\proj_\delta (x)|.
\end{align*}
We set $r:=d(x,D^c)$. Let $z$ be the point lying outside the disk $B(\proj_\delta (x),\delta)$, on the line passing $x$, $\proj_\delta (x)$ such that $|z-x|= r$. So, $z$ is in $\ov D$. For every $\delta' <\delta$, we take $y$ as the point inside $D$, on the same latter line such that the disks $B(y,\delta')$ and $B(\proj_\delta (x),\delta)$ have the same tangents passing $z$. See Figure \ref{fig:rhoL} for our settings.

\begin{figure}[H]
\centering
\includegraphics[scale=1.2]{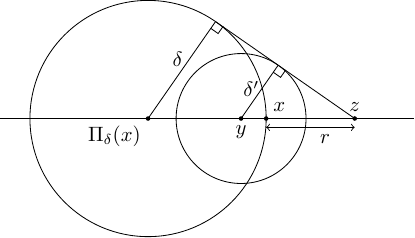}
\caption{To prove that $\rho$ is locally Lipschitzian.}
\label{fig:rhoL}
\end{figure}

\noindent
Thales' Theorem is applied,
\begin{align*}
\frac{|z-y|}{|z- \proj_\delta (x)|}=\frac{\delta'}{\delta},
\end{align*}
then, we get 
\begin{align}
|z-y|=\frac{\delta'}{\delta} (\delta+r). \label{eq1}
\end{align}
We notice that $B(\proj_\delta (x),\delta) \subset \ov D$ and, $z\in \ov{B(x,r)} \subset \ov D$. Since $D$ convex,   we have 
$$\conv\left[\ov{B(\proj_\delta (x),\delta)}\cup\{z\} \right] \subset \ov D.$$
This implies $B(y,\delta')\subset \ov D$. Thus, $ y\in \ov{D^{\delta'}}$ and, by using the equality \eqref{eq1}, it holds
\begin{align*}
d(x,D^{\delta'})\le |x-y|=|z-y|-r=\delta'-r(1-\frac{\delta'}{\delta}).
\end{align*}
We now can summarize that 
\begin{align}
\forall x\in \partial\Omega_{\delta^{-1}}\cap D,\; \forall \delta'<\delta,\quad
d(x,D^{\delta'})\le \delta' - d(x, D^c)(1-\frac{\delta'}{\delta}).\label{state1}
\end{align}
We observe that for each $x\in D \setminus \ov{\Omega_{\frac 1R}}$, by using Remark \ref{rhoAlp}, $x\in \partial\Omega_{\delta^{-1}}\cap D$ with $\delta=\rho(x)$.
As a consequence of  \eqref{state1}, for every $x,x'\in D \setminus \ov{\Omega_{\frac 1R}}$ and, for every
$\delta' <\delta =\rho(x)$, we have
\begin{align*}
d(x',D^{\delta'})-\delta' &\le |x-x'| + d(x,D^{\delta'}) - \delta' \\
&\le |x-x'| - d(x, D^c) \left[ 1-\frac{\delta'}{\delta} \right].
\end{align*}
If $\rho(x') <\rho(x)$, we can choose $\delta'=\rho(x')$ and, then $d(x',D^{\delta'})-\delta'=0$. We infer that
\begin{align*}
d(x,D^c)\left( 1-\frac{\rho(x')}{\rho(x)} \right)\le |x-x'|.
\end{align*}
Therefore, for every $x,x'\in D\setminus \ov {\Omega_{\frac 1R}}$ such that $\rho(x') <\rho(x)$, we have
\begin{align*}
|\rho(x')-\rho(x)| \le \frac{\rho(x)}{d(x,D^c)}|x'-x|.
\end{align*}
Finally, we conclude that, if $x\in D\setminus \ov {\Omega_{\frac 1R}}$ and $d(x,D^c)<\e$, we have the following inequality holding for every $x'\in B(x,\e)$:
\begin{align}
|\rho(x')-\rho(x)|\ \le\ \frac{R}{d(B(x,\e),D^c)}\, |x'-x|.
\label{rhoLip}
\end{align}
Recalling that $\rho$ is constant on $\ov{\Omega_{\frac 1R}}$, we conclude that $\rho$ is locally Lipschitz  in $D$.

\medskip
In a second step,  we show \eqref{rho-partialD}. The fact that $\rho(x)=\tau(x)$ for every  $x\in \partial D$ is a direct consequence of the equality
$\gamma(x)=\tau(x)$ (see \eqref{gamma=tau}) and of the fact that $\{\delta \ :\  \alpha(x,\delta) \le 0 \} = [0,\gamma(x)]$.
Note that, if $x\in \partial D$, then $ \alpha(x,\delta)= 0$ whenever $\delta\le \tau(x)$ and we know from Lemma \eqref{alpha} that $\alpha(x,\cdot)$ is stricly increasing on $[\gamma(x), R]$.

 Next we  show  that $\tau(x) \le  \frac{1}{\kappa_{\partial D}(x)} $  at any $x\in \partial D$. Without loss of generality, we can assume that
 $\tau(x)>0$.  Then, by the definition of $\tau(x)$, this means that $x\notin \Lambda_D$, hence $x\in \partial_r D$ and $d(x, D^\delta) = \delta$ whenever $\delta\le \tau(x)$. In particular the equality $d(x, D^{\tau(x)})= \tau(x)$ means that the ball $B(x -\tau(x) \nu_D(x)$ is contained in $D$ and touches the boundary $\partial D$ at $x$. It follows that $\kappa_{\partial D}(x)<+\infty$ and that  $0< \rho(x)=\tau(x) \le \frac{1}{\kappa_{\partial D}(x)}$. 
 
\medskip
 In a last step, we show that $\rho$ is continuous in all $\ov D$. First we notice that $\rho$ is lower semicontinuous since any strict upper level set 
 $\{\rho > r\}$ coincides with the open subset $\Omega_{\frac1{r}}$ for every $r\in (0,R_D)$ , while it coincides with $D$ for $r=0$ and the empty set for $r\ge R_D$.
 
 \medskip
 Let $x_n\in \ov D$ such that $x_n\to x$. The case where $x\in \ov U_D$ is easy since we have $\rho(x_n)\le \max \rho= R_D= \rho(x)$ and
 by the lower semicontinuity of $\rho$, $ \rho(x) \le \liminf \rho(x_n) \le \limsup \rho(x_n) \le \rho(x)$.
 Therfore we can  assume that $x\notin \ov U_D$.   Then  $x_n \notin \ov U_D$ for large $n$ and,  
 in virtue to the characterization \eqref{master-rho}, we have 
$$\rho(x_n) =d(x_n, D^{\rho(x_n)}) $$
 As $\rho(x_n) \le R$, we can assume that, up to extracting a subsequence,  $\rho(x_n) \to \delta^*$  for some $\delta^*$. 
  By the continuity property given in the assertion (i) of  Lemma \ref{stmonotone}, passing to the limit in the equality above leads to the equality
   $\delta^* =d(x, D^{\delta^*})$. Then, by Lemma \ref{alpha}, we have either $\delta^*= \rho(x)$ or $\delta^*=0$. 
   If $\delta^*>0$, we are done since, in this case, the whole sequence $\rho(x_n)$ converges to the unique cluster point $\rho(x)$.
   If $\delta^*=0$, then $\rho(x_n)\to 0$ and, thanks the lower semicontinuity of $\rho$, we infer that $\rho(x)=0$. Since $\rho(x) \ge d(x,D^c)$ while
    $\rho(x)=\tau(x)$ on $\partial D$, this is possible only if $x\in \partial_s D$ where $\tau(x)=0$. However, even in this case, we have $\rho(x_n) \to \rho(x)$.
  
 \end{proof}

%

 \begin{remark}
Y. Li and L. Nirenberg proved in \cite{li2005distance} that $\tau$ is Lipschitz if $D$ has a $C^{2,1}$ boundary but 
it is untrue for a general convex domain  (even  $C^{2,\alpha}$ with $\alpha<1$ is not enough).
We conjecture that for a general convex domain $D\subset \R^2$, the cut-locus potential $\rho$ belongs to $C^{0,\frac1{2}}(D)$, as it is the case for a square (see  the example \ref{ExRho} below).
\end{remark}

\begin{example}[An explicit formula for $\rho$ in the case of a square] \label{ExRho}~

Let us consider the domain  $D=(-1/2,1/2)^2$ whose inradius is $R=\frac1{2}$. Hence   $\rho= 1/2$ on the disk $U_D=\Omega_2= \{|x|\le \frac1{2}\}$. 
The cut-locus $\ov{\Lambda}$ consists of the two diagonals of $D$ whereas the Cheeger constant is  $h_D=2+\sqrt{\pi}$ . For every $x\in D\setminus\Omega_2$, $\delta=\rho(x)$ is the unique  $\delta$ such that $x$ belongs to the arc of circle  $C^\delta= \partial \O_\l\cap D$.
Let us compute $\rho$ in the north east quater $[0, 1/2]^2\setminus\ov \Omega_2$. We set $x=(x_1,x_2)$, and $\delta=1/2-t$ for $t\in (0,1/2)$. Then, $x\in C^\delta$ (hence $\rho(x) =\frac1{2}-t$)  if and only if $t$ solves the equation
\begin{align*}
\begin{cases}
(x_1-t)^2 + (x_2-t)^2 = (\frac{1}{2} -t )^2\\
x_1^2 + x_2^2 \ge \frac{1}{4}.
\end{cases}
\end{align*}
These equations determine a unique $t\in [0,\frac1{2}]$  given by
\begin{align*}
t = x_1 +x_2 -\frac{1}{2} - \sqrt{2}\sqrt{\left(\frac{1}{2} - x_1 \right) \left( \frac{1}{2}- x_2 \right)}.
\end{align*}
Accordingly we obtain the following expression for $\rho$ for $x\in D$: 
 \begin{align*}
\rho(x)= \begin{cases}  1 - (|x_1| + |x_2|) + \sqrt{2}\sqrt{\left(\frac{1}{2} - |x_1| \right) \left( \frac{1}{2}- |x_2| \right)} &  \text{if}\  x_1^2+ x_2^2\ge 1/4 \\
\frac1{2}  & \text{otherwise}
\end{cases}
\end{align*}
This computation confirms that $\rho$ vanishes only at the vertices of $\ov D$ where the curvature is infinite. It is positive  and  of class $C^1$  inside $D$. After some computations, we get the following equality:
\begin{align*}
|\nabla\rho(x)| = \frac{\rho(x)}{\sqrt{2}\sqrt{\left(\frac{1}{2} - |x_1| \right) \left( \frac{1}{2}- |x_2| \right)}}
\quad \text{on}\  \{x_1^2+ x_2^2\ge 1/4\} ,
\end{align*}
which clearly shows that $|\nabla \rho|$ blows-up when approaching  the sides of the square.  The presence of the square term in the expression of $\rho(x)$ indicates that we cannot expect better than the $C^{1/2}$ regularity of $\rho$ in $\ov D$. 
We present  some calculations for the normalized gradient of $\rho$ on $D\setminus \ov U_D.$ 
\begin{align*}
\nabla \rho(x) = \left(
\begin{array}{c}
-1 - \frac{\sqrt{2}}{2} a(x)  \\
-1 - \frac{\sqrt{2}}{2} \frac{1}{a(x)}
\end{array} \right)\  \text{where}
\quad  a(x) := \frac{ \sqrt{\frac{1}{2}- x_2 }}{\sqrt{\frac{1}{2} - x_1 }}\ ,
\end{align*}
 The unit vector $q_\rho:=  - \frac{\nabla \rho}{|\nabla\rho|}$ is given by:
\begin{align*} &q_\rho(x)  = \frac{1}{\sqrt{C_1(x)^2 + C_2(x)^2}} \left( \text{sgn}(x_1)\, C_1(x),  \text{sgn}(x_2)\, C_2(x) \right) ,\\
&C_{1}(x) := 1 + \frac{\sqrt{2}}{2} a(x)  \quad \text{and} \quad C_{2}(x) :=  1 + \frac{\sqrt{2}}{2} \frac1{a(x)} .
\end{align*}
%
From the expressions above, we can check that, if $x$ approaches the vertical sides (i.e.$|x_1| \to 1/2$), then $a(x) \to +\infty$ so that 
 $q_\rho(x)\to ({\rm sgn}(x_1), 0)$ ; on the same way  $q_\rho(x)\to (0, {\rm sgn}(x_2))$ as $|x_2|\to 1/2$. Therefore the normal trace of $q_\rho$ on $\partial D$
 satisfies the equality  $q_\rho  \cdot \nu_D=1$. 
 This property will be confirmed in the general case in the forthcoming Theorem \ref{thmcutl}.
 A representation  of  $\rho$ and of its normalized gradient on the first quarter $[0,1/2]^2$ is displayed in Figure \ref{fig:rhoDs}.  In the left subfigure, the level lines of $\rho$ are drawn in varied colors while, in the right one,  the normalized gradient of $\rho$ is represented (with a magnifying glass)  in black streamlines  starting 
 from the circle $|x|=1/2$ .
\end{example}

\begin{figure}[H]
\centering
\includegraphics[scale=0.278]{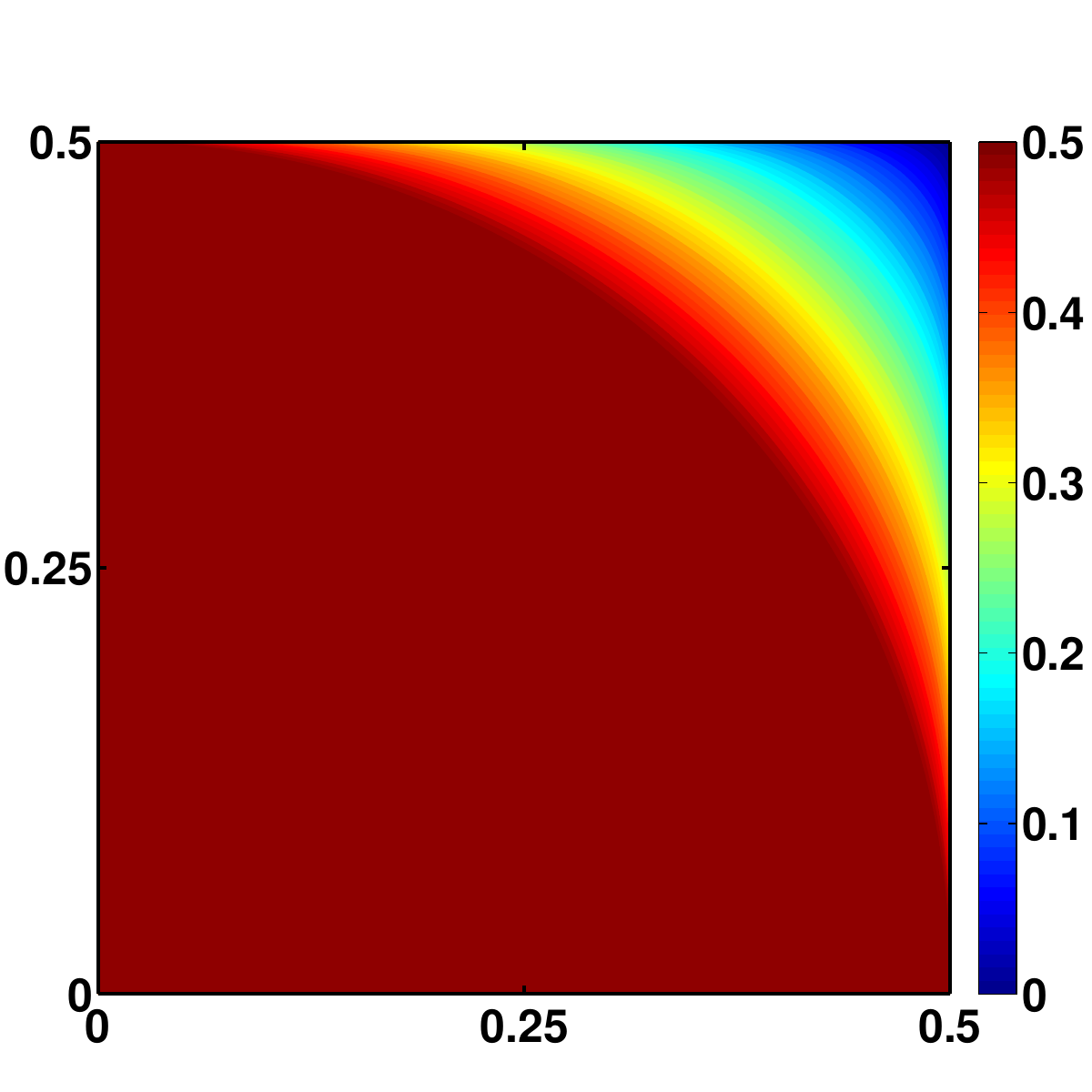} \;
\begin{tikzpicture}[spy using outlines={circle,
  magnification=8, size=4.4cm, connect spies}]
\node[inner sep=0pt] at (0,0){
\includegraphics[scale=0.256]{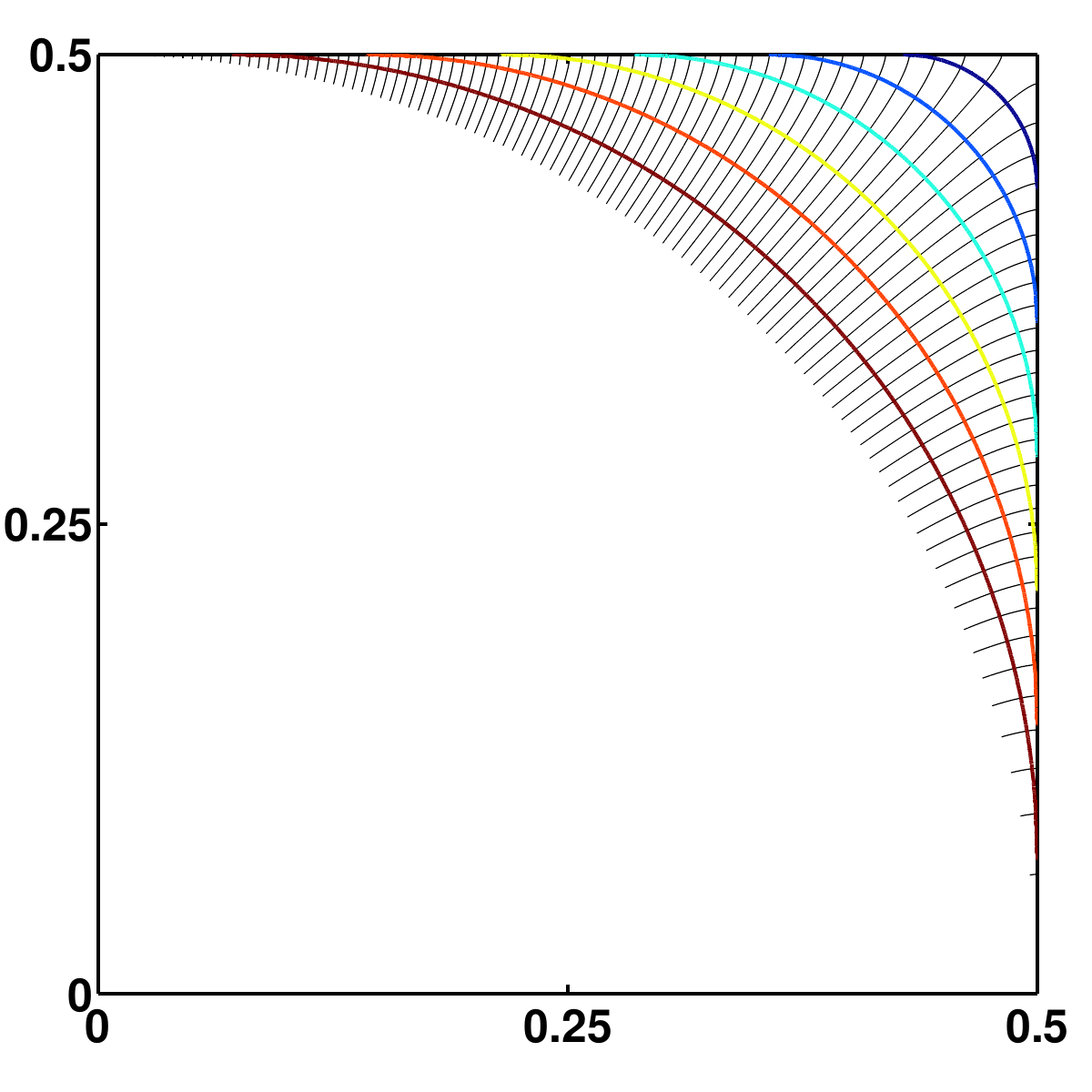}};
\spy [black] on (2.,0.5) in node at (5.,0.);
\end{tikzpicture}
\caption{ $\rho$ and the stream lines of  $q_\rho= -\frac{\nabla \rho}{ |\nabla\rho|}$ when $D=(-\frac1{2},\frac1{2})^2$.} \label{fig:rhoDs}
\end{figure}

\begin{theorem}\label{thmcutl}
Let $D\subset \RR^2$ be a bounded convex domain of inner radius $R$ and central set $U_D$.  Then, on the open subset $D\setminus U_D= \{0<\rho <R\}$, the locally Lipschitz potential $\rho$ is such that $\nabla \rho \not= 0 \ \text{ a.e.}$. Moreover the unit vector field $q_\rho:=- \frac{\nabla\rho}{|\nabla\rho|}$
satisfies
 \begin{equation}\label{pde-rho}
  \Div q_\rho = \frac1{\rho} \quad \text{in $D\setminus \ov U_D$}\quad \text{and} \quad  q_\rho\cdot\nu_D= 1  \quad \HH^1 \ \text{a.e. on $\partial D\setminus \ov U_D$}.
\end{equation}
Here above, the first equality is intended a.e. and in the sense of distributions in $D\setminus \ov U_D$. 
\end{theorem}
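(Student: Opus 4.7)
The plan is to first establish a pointwise geometric identification of $q_\rho$ in terms of the projection $c(x):=\Pi_{\rho(x)}(x)$ of $x$ onto $\overline{D^{\rho(x)}}$, and then to deduce $\Div q_\rho=1/\rho$ from the fact that the level sets of $\rho$ in $D\setminus\ov{U_D}$ are locally arcs of circles of radius $\rho(x)$. For $x\in D\setminus\ov{U_D}$, set $\delta:=\rho(x)\in(0,R_D)$; the characterization \eqref{master-rho} together with the convexity of $D^\delta$ ensures that $c(x)$ is the unique point of $\ov{D^\delta}$ realizing $|x-c(x)|=\delta$, and that $B(c(x),\delta)$ is a maximal inscribed ball of $D$ passing through $x$.

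At any differentiability point $x\in D\setminus\ov{U_D}$ of $\rho$, I would show that $\nabla\rho(x)$ is a strictly positive multiple of the inward radial vector $\eta:=(c(x)-x)/\delta$. Translating the center by $s\eta$ produces $c'=c(x)+s\eta$ with $d(c',D^c)\ge \delta+s-o(s)$ (because $\eta$ is the inward normal to $\ov{D^\delta}$ at $c(x)$), hence $B(c',\delta+s)\subset D$; since $|(x+s\eta)-c'|=\delta\le\delta+s$, this ball witnesses $\rho(x+s\eta)\ge\delta+s-o(s)$, giving $\partial_\eta\rho(x)\ge 1$. For any tangential direction $\xi\perp\eta$, a local analysis of $\Omega_{1/\delta}$ shows that the level set $\{\rho=\delta\}$ coincides near $x$ with the smooth arc $\partial B(c(x),\delta)\cap D$, along which $\rho$ is constantly equal to $\delta$; combined with the Lipschitz continuity of $\rho$ this forces $\partial_\xi\rho(x)=0$. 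Together these give $\nabla\rho(x)=\partial_\eta\rho(x)\,\eta$ with $|\nabla\rho|\ge 1>0$, proving $\nabla\rho\ne 0$ a.e.\ and $q_\rho(x)=(x-c(x))/\rho(x)$.

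The divergence identity then follows from the classical two-dimensional fact that for a smooth unit vector field $n$ in the plane one has $\Div n=\kappa$, where $\kappa$ is the signed curvature of the orthogonal level curves relative to $n$. Each connected component of $D\setminus\ov{U_D}$ is a ``sector'' in which the projection $c(\cdot)$ is continuous and the level curves of $\rho$ are circular arcs of radius $\rho(x)$ centered at $c(x)$; $q_\rho$ is their outward unit normal, so its divergence equals $1/\rho(x)$ pointwise a.e. Combined with the local Lipschitz regularity of $q_\rho$ inside each sector (inherited from the Lipschitz regularity of $\rho$ and the lower bound $|\nabla\rho|\ge 1$), this identifies the distributional divergence of $q_\rho$ on $D\setminus\ov{U_D}$ with the function $1/\rho$.

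The boundary condition is the simplest step: for a regular point $y\in\partial_r D\setminus\ov{U_D}$, the maximal inscribed ball through $y$ is tangent to $\partial D$ precisely at $y$, so $c(y)=y-\rho(y)\,\nu_D(y)$ and consequently $q_\rho(y)=\nu_D(y)$. This yields $q_\rho\cdot\nu_D=1$ on the full-$\HH^1$-measure subset $\partial_r D\setminus\ov{U_D}$; since $q_\rho\in L^\infty$ with $\Div q_\rho\in L^\infty$, the Anzellotti normal trace on $\partial D$ is well defined and is identified by continuity from inside through this interior formula. The main obstacle I anticipate is the regularity analysis underlying Step~3, namely verifying that the sectorial decomposition of $D\setminus\ov{U_D}$ is regular enough to turn the pointwise formula into a genuine distributional identity, and checking that no singular contribution concentrates on the ``skeleton'' where $c(\cdot)$ may have multiple limit values—geometrically, this amounts to showing that when two distinct maximal balls of the same radius meet at a common interior point they share a common tangent, so that the one-sided limits of $q_\rho$ agree in the component normal to the skeleton.
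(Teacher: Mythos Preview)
Your overall strategy coincides with the paper's: identify $q_\rho(x)$ with the radial unit vector $(x-c(x))/\rho(x)$ via the projection $c(x)=\Pi_{\rho(x)}(x)$, read off the boundary condition from tangency of the inscribed ball at regular points of $\partial D$, and compute $\Div q_\rho$ from the fact that the level curves $\{\rho=\delta\}$ are circular arcs of radius $\delta$. Your directional-derivative argument giving $|\nabla\rho|\ge 1$ is a pleasant addition not made explicit in the paper (it needs a little more care when $c(x)\in\partial_s D^\delta$, since then $\eta$ is merely one direction in the inward normal cone, but the conclusion survives).

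The gap you anticipate is real, and your proposed fix for it is incorrect. The claim that local Lipschitz regularity of $q_\rho$ is ``inherited from the Lipschitz regularity of $\rho$ and the lower bound $|\nabla\rho|\ge 1$'' is false in general: a Lipschitz function with $|\nabla\rho|$ bounded away from zero need not have a continuous gradient direction (take $\rho(s,t)=s+|t|$ on $\{s>0\}$, where $|\nabla\rho|=\sqrt{2}$ but $\nabla\rho/|\nabla\rho|$ jumps across $\{t=0\}$). Nor does the ``sector'' picture immediately help, because the relevant center $c(x)$ is a singular point of $\partial D^{\rho(x)}$ and therefore moves as $\rho(x)$ varies; the decomposition into regions with constant center is not a decomposition of $D\setminus\ov{U_D}$.

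What is actually needed---and what the paper supplies---is Lipschitz regularity of the \emph{center map} $x\mapsto c(x)=\Pi_{\rho(x)}(x)$ itself. The paper proves this via a quantitative projection estimate
\[
|\Pi_\delta(y)-\Pi_{\delta'}(y)|\ \le\ K_\delta\,|\delta-\delta'|\qquad(0<\delta'<\delta),
\]
obtained from the ``kite'' construction $M^\delta_{\delta'}(x)$ (Lemma~\ref{lem:mod1}) together with the lower bound on corner angles of $\partial D^\delta$ (Lemma~\ref{access}). Combined with the already-known Lipschitz bound on $\rho$, this shows that $c(\cdot)$, hence $q_\rho$, is locally Lipschitz throughout $D\setminus\ov{U_D}$. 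Once this is in hand, the pointwise computation $\Div q_\rho=1/\rho$ (which the paper carries out by evaluating $(Dq_\rho)\,q_\rho\cdot q_\rho=0$ and $(Dq_\rho)\,q_\rho^\perp\cdot q_\rho^\perp=1/\rho$ along the arcs) is automatically the distributional divergence, and your ``skeleton'' concern dissolves: there is no interior set on which $c(\cdot)$ has multiple limit values, so no singular measure can concentrate.
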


\begin{proof}
Note that the open subset $\{0<\rho <R\}$ is indeed $D\setminus \ov{\Omega_{\frac 1R}}$ where $\rho$ is not constant. On this subset, the vector field $\eta:=-\nabla \rho/|\nabla \rho|$ can be rewritten as
\begin{align*}
\eta(x)= \frac{x- \proj_{\rho(x)} (x)}{\rho(x)}.
\end{align*}
Owing to the expression above, it is clear that the equality $\eta(x)=\nu_D(x)$ holds at any regular point of the boundary, that is  for every $x\in \partial_r D\setminus \ov U_D$.
Indeed,  at those points, we have $\rho(x)=\tau(x)$ and $x-\proj_{\rho(x)} (x)= \rho(x) \nu_D(x)$. 

Next we are going to prove that $\proj_{\rho(x)} (x)$ is locally Lipschitz in $x$ and hence, so is $\eta$.
We firstly claim that for every $\delta, \delta'$ satisfying $0<\delta'<\delta$, for each $y\in D$, there is a constant $K_\delta$ such that 
\begin{align}
|\proj_\delta (y)- \proj_{\delta'} (y)| \le K_\delta |\delta-\delta'|.
\label{excessb}
\end{align}
It follows immediately that for every $x\in D\setminus \ov{\Omega_{\frac 1R}}$, $x'\in B(x,\e)\subset D$, keeping in mind \eqref{rhoLip},
\begin{align*}
|\proj_{\rho(x)} (x)- \proj_{\rho(x')} (x')|
&\le | \proj_{\rho(x)} (x)- \proj_{\rho(x')} (x)| + |x-x'|\\
&\le K_{\rho(x)}|\rho(x)-\rho(x')|+|x-x'| \\
&\le \left( K_{\rho(x)} + \frac{R}{d(B(x,\e),D^c)} \right)|x-x'|.
\end{align*}
Hence, we obtain that $\eta$ is locally Lipschitz on $D$ provided 
we show the validity of the claim \eqref{excessb}. 
Given $0<\delta' <\delta$, for every $y\in D$, by Lemma \ref{lem:mod1} (ii), $\proj_{\delta'} (y)$ is always in  $M^\delta_{\delta'}( \proj_\delta (y) )$, see Figure \ref{fig:proof} for illustration. We obtain
\begin{align}
|\proj_\delta (y) - \proj_{\delta'} (y)| \le |w-x|=\frac{|\delta-\delta'|}{\cos\varphi(x)},
\label{ineq3}
\end{align}
where $x= \proj_\delta (y)$ and $w$ is the extreme point of $M^\delta_{\delta'}(x)$ in the complement of $D^{\delta'}$ (see Figure \ref{fig:proof}).
By exploiting the assertion (iii) of Lemma \ref{access} , there exists $K_{\partial D^\delta}>0$ such that 
\begin{align*}
K_{\partial D^\delta}
=\min \left\{ k_{\partial D}(s) \sepa s\in \partial D^\delta  \right\} = \min \left\{ \cos^2\varphi(s) \sepa s\in \partial D^\delta  \right\}.
\end{align*}
Since $x\in \partial D^\delta$, we have
\begin{align}
\frac{1}{\cos\varphi(x)} \le \frac{1}{\sqrt{K_{\partial D^\delta}}}.
\label{ineq4}
\end{align}
We then use the inequalities \eqref{ineq3} and \eqref{ineq4} to derive that the inequality \eqref{excessb} holds  with $K_\delta = (K_{\partial D^\delta})^{-1/2}$

\begin{figure}[H]
\centering
\includegraphics[scale=1]{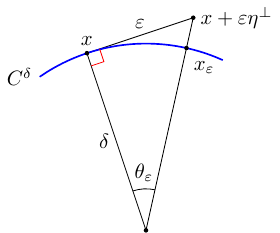}
\caption{Divergence of $\eta$ along $C^\delta$.}\label{fig:diveta}
\end{figure}

The next step is to prove that $\displaystyle \Div \eta = \frac{1}{\rho}$ in $D\setminus \ov{\Omega_{\frac 1R} }$. The vector field $\eta$ is indeed the unit normal to the level sets $C^\delta= \{x\in D \sepa \rho(x) = \delta\}$ which are arcs of radius $\delta$. Given $x$ and $x_\e$ on $C^\delta$ illustrated by Figure \ref{fig:diveta}. Let us evaluate locally divergence of $\eta$ along directions $\eta$ and $\eta^\perp$.
We recall that
\begin{align*}
(D\eta)h\cdot h:= \lim_{\e\to 0} \left\langle \frac{\eta(x+\e h)-\eta(x)}{\e},h \right\rangle
\end{align*}
for some non null direction $h$.
Since $|\eta|=1$, we have
\begin{align*}
2(D\eta)h\cdot \eta =
\left\langle \frac{\eta(x+\e h)-\eta(x)}{\e}, \eta(x+\e h) +\eta(x) \right\rangle = \frac{1}{\e} \left[ |\eta|^2(x+\e h) -|\eta|^2(x) \right] = 0.
\end{align*}
Thus, for $h=\eta$, we get 
\begin{align}
(D\eta)\eta\cdot \eta =0. \label{eq2}
\end{align}
As $\eta$ locally Lipschitz, there is some constant $M$ such that
\begin{align*}
\left| \frac{\eta(x+\e h)- \eta(x_\e)}{\e} \right| \le \frac{M}{\e}|x+\e h -x_\e| = \frac{M\delta}{\e}\left(\frac{1}{\cos\theta_\e} -1 \right) = \frac{M\delta}{\e}\left(\sqrt{1+ \frac{\e^2}{\delta^2}} -1 \right) \sim \frac{M\e}{2\delta}.
\end{align*}
Hence, for $h=\eta^\perp$,
\begin{align*}
\left\langle \frac{\eta(x+\e h)-\eta(x)}{\e},h \right\rangle 
&= \left\langle \frac{\eta(x+\e h)-\eta(x_\e)}{\e},h \right\rangle + \left\langle \frac{\eta(x_\e)-\eta(x)}{\e},h \right\rangle \\
& \sim \frac{M\e}{2\delta} + \frac{\sin\theta_\e}{\e} 
\sim \frac{M\e}{2\delta} + \frac{\tan\theta_\e}{\e} \\
&= \frac{M\e}{2\delta} + \frac{1}{\delta}.
\end{align*}
We obtain 
\begin{align}
(D\eta)\eta^\perp\cdot \eta^\perp = \frac{1}{\delta}.
\label{eq3}
\end{align}
From the equations \eqref{eq2} and \eqref{eq3}, we can derive that 
\begin{align*}
\Div \eta = (D\eta)^T : Id =(D\eta)^T : (\eta\otimes \eta + \eta^\perp\otimes \eta^\perp)= (D\eta)\eta\cdot \eta+ (D\eta)\eta^\perp\cdot \eta^\perp = \frac{1}{\delta} = \frac{1}{\rho}.
\end{align*}
As $\eta$ is locally Lipschitz on the open set $\{0<\rho< R\}$, the equality above holds not only a.e. on this set but also in the distributional sense.
\end{proof}

\begin{remark} \label{newpb} In view of Theorem \ref{thmcutl}, $\rho$ solves the boundary value problem 
$$  \Div \frac{\nabla \rho}{|\nabla\rho|}  + \frac1{\rho}=0 \quad \text{in $D\setminus \ov U_D$}\ , 
\quad  \frac{\nabla \rho}{|\nabla\rho|}\cdot \nu_D =- 1 \quad \text{in $\partial D\setminus \ov U_D$}\ ,\quad \rho=R_D\quad \text{in $\partial U_D $ .}$$
In fact it can be shown that  $\rho$ is directly  related to the following strictly convex variational problem
$$  \min \aco{ \int_D |\nabla u| + \int_{\partial D} |u|\, d\HH^1 - \int_D \ln(u) \, dx \sepa   u\in W^{1,1}(D) } ,$$
whose unique solution $\ov u$ admits a maximal plateau $\ov u= \frac1{h_D}$ and coincides with $\rho$ on the complement of the Cheeger set of $D$.
Studying this new  minimization problem in dimension $N \ge 2$ could help us  understand the structure of solutions  to $m(\l,D)$ in the case of a general domain $D\subset \R^N$, especially when the convexity assumption of the domain $D$ is removed.
\end{remark}

%
%
%


\subsection{ The set $\Omega_\lambda$ is calibrable and solves $m(\l,D)$}
\label{mlOl}
By exploiting the PDE \eqref{pde-rho} satisfied by the cut-locus potential of $D$,   
we are now in position to derive, as corollaries of Theorem \ref{thmcutl}, the calibrability and the optimality of the set $\O_\l$ for $m(\l,D)$.

\begin{corollary}\label{lemcalibr2}
Let $\l \ge h_D$.  Then $\O_\l$ defined by \eqref{def:Olambda} solves $m(\l, \O_\l)$. Therefore $\l\ge \l_{\O_\l}$ and $\O_\l$  is  $\theta$-calibrable with constant $\theta\le \lambda\, \lambda_{\O_\l}^{-1}$.
\end{corollary}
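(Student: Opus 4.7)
The plan is to construct a single vector field $\bar q\in L^\infty(\O_\l;\R^2)$ satisfying the three conditions
\[
|\bar q|\le 1,\qquad 0\le \Div \bar q\le \l,\qquad \bar q\cdot \nu_{\O_\l}=1\ \text{on}\ \partial\O_\l,
\]
from which Theorem \ref{duality2}(ii), applied with ambient domain $\O_\l$ and candidate minimizer $\O_\l$ (so that the constraint $\Div\bar q=\l$ on $\O_\l\setminus\O_\l$ is vacuous), will yield that $\O_\l$ solves $m(\l,\O_\l)$. Two ingredients are at hand. First, since $\l\mapsto\O_\l$ is nondecreasing and $1/R_D\le h_D\le \l$, the inclusions $U_D=\O_{1/R_D}\subset C_D=\O_{h_D}\subset \O_\l$ hold, so $\O_\l\setminus\ov{C_D}\subset D\setminus\ov{U_D}$, and the field $q_\rho=-\nabla\rho/|\nabla\rho|$ of Theorem \ref{thmcutl} is defined there with $|q_\rho|=1$ and $\Div q_\rho=1/\rho\in(h_D,\l)$, since $\rho\in(1/\l,1/h_D)$ on this region by Lemma \ref{lemrho1}. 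Second, $C_D$ is self-Cheeger ($h_{C_D}=h_D=P(C_D)/|C_D|$ because any Cheeger subset of $C_D$ is admissible in $D$), hence it minimizes $m(h_D,C_D)$, and Theorem \ref{duality2}(ii) applied to $C_D$ supplies a field $q^C\in L^\infty(C_D;\R^2)$ with $|q^C|\le 1$, $\Div q^C=h_D$, and $q^C\cdot\nu_{C_D}=1$ on $\partial C_D$.

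I would then set
\[
\bar q(x):=\begin{cases} q^C(x) & x\in C_D,\\ q_\rho(x) & x\in \O_\l\setminus\ov{C_D}, \end{cases}
\]
and verify the three calibration conditions. The norm bound and the bulk divergence bound $\Div\bar q\in[h_D,\l]$ on each piece are immediate from the two constructions above. The hard part is to check that the distributional divergence of $\bar q$ on $\O_\l$ carries no singular measure on the interface $\Gamma=\partial C_D\cap\O_\l=\{\rho=1/h_D\}$; this reduces to showing the matching of normal traces on $\Gamma$. The inward trace of $q^C$ is $1$ by construction, and the outward trace of $q_\rho$ is also $1$ because, approaching $\Gamma$ from $\{\rho<1/h_D\}$, the vector $\nabla\rho$ points toward higher values of $\rho$, i.e.\ into $C_D$, so $q_\rho$ coincides with the outer normal $\nu_{C_D}$ along $\Gamma$. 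The remaining identity $\bar q\cdot\nu_{\O_\l}=1$ on $\partial\O_\l$ is verified by the same level-set argument on the interior piece $\partial\O_\l\cap D=\{\rho=1/\l\}$ (where $\bar q=q_\rho=\nu_{\O_\l}$), and by Theorem \ref{thmcutl} combined with $\nu_{C_D}=\nu_D$ on $\partial C_D\cap\partial D$ for the portion of $\partial\O_\l$ lying on $\partial D$.

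Once $\O_\l$ is established as a minimizer of $m(\l,\O_\l)$, the remaining claims follow formally. Comparing with the empty competitor (value $0$) gives $P(\O_\l)-\l|\O_\l|\le 0$, and since $|\O_\l|>0$ (because $C_D\subset\O_\l$), this is $\l\ge\l_{\O_\l}$. Finally, Proposition \ref{mlDtheta} applied with $\theta=\l/\l_{\O_\l}\ge 1$, for which $\theta\l_{\O_\l}=\l$, yields the $\theta$-calibrability of $\O_\l$ with $\theta\le\l/\l_{\O_\l}$.
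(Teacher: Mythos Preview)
Your proof is correct and follows essentially the same route as the paper: you build the calibration field on $\O_\l$ by gluing a calibrating field for the Cheeger set $C_D=\O_{h_D}$ with the cut-locus field $q_\rho=-\nabla\rho/|\nabla\rho|$ on the annulus $\O_\l\setminus\ov{C_D}$, then invoke Theorem~\ref{duality2}(ii) with $D=\O=\O_\l$ and close with Proposition~\ref{mlDtheta}. The paper's argument is identical in structure; your treatment of the interface matching and of the two pieces of $\partial\O_\l$ is in fact slightly more explicit than the paper's.
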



\begin{proof} By \eqref{minlambda}, it is enough to show that $\O_\l$ solves $m(\l, \O_\l)$. Let us apply  the optimality conditions of the assertion (ii) of Theorem \ref{duality2} in the case where  $D=\O= \O_\l$. Then we are done if we can find $\ov q\in L^\infty(\O_\l;\R^2)$ such that
\begin{align}
 |\ov q| \le 1 \quad\tinm{a.e. in} \O_\l\ ,\quad 0 \le \Div \ov q \le \lambda \quad\tinm{a.e. in} \O_\l \ ,\quad
 \ov q\cdot \nu_{\O_\l} =1 \quad\tinm{$\HH^1$-a.e. on} \partial \O_\l\label{doptOl}
\end{align}
Such a vector field can be  constructed on $\O_\l$ by starting from a calibrating field of the Cheeger set of $D$. This semi-explicit construction is provided with the existence of a calibrating field for the Cheeger set of $D$, that means $\ov q= q_{h_D}$ in $\Omega_{h_D}$ where $q_{h_D}\in L^\infty(\Omega_{h_D};\RR^2)$ satisfies
\begin{align*}
|q_{h_D}| \le 1,\quad \Div q_{h_D} = h_D \quad\tinm{a.e. in} \Omega_{h_D},\qquad q_{h_D} \cdot \nu_{\Omega_{h_D}} =1 \quad\tinm{$\HH^1$-a.e. on} \partial \Omega_{h_D}.
\end{align*}
We need  now  to construct $\ov q$ on $\O_\l\setminus \ov{\O_{h_D}}$.  The cut-locus potential $\rho$ comes into play here.
Let us define $\ov q$ on $\O_\l$ as follows: 
\begin{align}\label{calib-Olambda}
\ov q (x) := \begin{cases}  q_{h_D}(x) & \ \text{if $x\in \O_{h_D}$} \\
- \frac{\nabla \rho(x)}{|\nabla \rho|(x)} & \ \text{if $x\in \O_\l\setminus \ov{\O_{h_D}} $} \end{cases}
\end{align}
The condition $ |\ov q| \le 1$ is clearly satisfied. On the other hand, recalling that $\O_{h_D}=\{\rho>h_D^{-1}\}$,  $\rho$ is constant  on the interface $\partial \O_{h_D} \cap D$ while $h_D\le \rho^{-1}\le \l$ in $\O_\l\setminus \O_{h_D}$.  Therefore the normal trace of $\ov q$ has no jump and  the ditributional divergence  $\Div \ov q $ belongs to $L^\infty(\O_\l;  [h_D, \l])$ since
$$  \Div \ov q = \begin{cases} h_D & \ \text{if $x\in \O_{h_D}$} \\
\frac1{\rho} &\ \text{if $x\in \O_\l\setminus \O_{h_D}$} \end{cases}.$$
Eventually, the condition $\ov q\cdot \nu_{\O_\l} =1$ is fullfiled on $\partial\O_\l \cap D$ where $\rho =\frac1{\l}$. 
The same equality holds on the shared boundary piece $\partial\O_\l \cap \partial D$ since, by \eqref{pde-rho}, it holds  $\eta=\nu_D$ on $\partial D \setminus \ov U_D$. 
Finally, the three  conditions in \eqref{doptOl} are satisfied.
\end{proof}

 \begin{corollary}\label{opti-mlD}
Let $ \lambda > h_D$. Then,  the upper level set 
$\O_\l=  \{ \rho > \lambda^{-1}\} $
is the unique solution to the problem $m(\lambda,D)$.
\end{corollary}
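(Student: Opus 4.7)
My plan is to build on Corollary~\ref{lemcalibr2}, which provides a calibrating vector field $\ov q$ on $\Omega_\lambda$ (given by \eqref{calib-Olambda}) satisfying $|\ov q|\le 1$, $\Div\ov q \in [h_D,\lambda]$, and $\ov q\cdot\nu_{\Omega_\lambda}=1$ on $\partial\Omega_\lambda\cap D$. To conclude that $\Omega_\lambda$ solves the ambient problem $m(\lambda,D)$, I would extend $\ov q$ to a vector field on all of $D$ satisfying the optimality conditions \eqref{opt1}--\eqref{opt2} of Theorem~\ref{duality2}(ii) with $\Omega=\Omega_\lambda$. In particular, on $D\setminus\ov\Omega_\lambda$ we need $|\ov q|\le 1$, $\Div\ov q=\lambda$ a.e., and matching normal trace on the common interface $\partial\Omega_\lambda\cap D$.

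The complement $D\setminus\ov\Omega_\lambda$ decomposes, by convexity of $D$, into connected ``horns'', each bounded by a circular arc of $\partial\Omega_\lambda\cap D$ (of radius $1/\lambda$, centered at some $c\in\partial D^{1/\lambda}$) together with a portion of $\partial D$. Inside each horn I would construct $\ov q$ explicitly, exploiting the 2D cut-locus structure of $D$ and the ray foliation emanating from the relevant $c$: define $\ov q$ as a suitably weighted combination of the outward radial direction from $c$ and the outward unit normal to $\partial D$, calibrated so that $\Div\ov q=\lambda$ a.e., $|\ov q|\le 1$ holds pointwise, and $\ov q=\nu_{\Omega_\lambda}$ on the common arc. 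Patching these local fields together with the field from \eqref{calib-Olambda} yields a global $\ov q$ on $D$ satisfying \eqref{opt1}--\eqref{opt2}, and Theorem~\ref{duality2}(ii) then gives optimality of $\Omega_\lambda$ for $m(\lambda,D)$.

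For uniqueness, let $\Omega'\subset D$ be any other minimizer. The optimality condition \eqref{opt2} yields $\Div \ov q=\lambda$ a.e.\ in $D\setminus\Omega'$; since our construction gives the strict inequality $\Div\ov q<\lambda$ throughout $\Omega_\lambda$ (namely $\Div\ov q=1/\rho<\lambda$ on $\Omega_\lambda\setminus\ov\Omega_{h_D}$ and $\Div\ov q=h_D<\lambda$ on $\Omega_{h_D}$), this forces $\Omega_\lambda\subset\Omega'$ up to negligible sets. The converse inclusion $\Omega'\subset\Omega_\lambda$ follows from the boundary condition $\ov q\cdot\nu_{\Omega'}=1$ on $\partial\Omega'$: combined with $|\ov q|\le 1$ it forces $\ov q=\nu_{\Omega'}$ on $\partial\Omega'$, and the explicit radial-type structure of the extended $\ov q$ inside each horn rules out $\partial\Omega'$ differing from $\partial\Omega_\lambda\cap D$.

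The main obstacle is the explicit construction of $\ov q$ inside the horns. Indeed, the naive radial ansatz $\ov q(x)=u(|x-c|)e_r$ enforcing $\Div\ov q=\lambda$ gives $u(r)=\lambda r/2+1/(2\lambda r)$, which does satisfy $u(1/\lambda)=1$ but exceeds $1$ for $r>1/\lambda$, violating the pointwise constraint $|\ov q|\le 1$. A more refined construction, relying on the detailed 2D cut-locus geometry of $D$ and possibly blending radial contributions from distinct centers together with tangential corrections, is therefore needed; this is the technical core of the proof, whereas uniqueness, once the calibration is in place, is a comparatively clean consequence of the strict divergence inequality on $\Omega_\lambda$.
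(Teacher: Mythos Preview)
Your optimality strategy---extending the calibration $\ov q$ from $\Omega_\lambda$ to all of $D$ so as to satisfy \eqref{opt1}--\eqref{opt2}, then invoking Theorem~\ref{duality2}(ii)---is sound, and the paper carries it out in the subsequent Subsection~\ref{sec:{calib-D}} (Lemma~\ref{last}), including the horn-by-horn construction you anticipate. You are also right that the naive radial ansatz fails. However, the paper's proof of this corollary deliberately \emph{avoids} that technical construction. It uses instead the simpler field $\ov q=q_\rho=-\nabla\rho/|\nabla\rho|$ (glued with $q_{h_D}$ on the Cheeger set), whose divergence is $1/\rho$ rather than $\lambda$; this field is \emph{not} admissible for Theorem~\ref{duality2}(ii) on $D$ (since $1/\rho>\lambda$ on $D\setminus\Omega_\lambda$), but one can still write, for any competitor $F\subset D$,
\[
P(F)-\lambda|F|\ \ge\ \int_F(\Div\ov q-\lambda)\,dx\ =\ \int_{\Omega_\lambda}\!\Big(\tfrac1\rho-\lambda\Big)+\int_{F\setminus\Omega_\lambda}\!\Big(\tfrac1\rho-\lambda\Big)-\int_{\Omega_\lambda\setminus F}\!\Big(\tfrac1\rho-\lambda\Big),
\]
and both correction terms are nonnegative because $1/\rho-\lambda$ changes sign exactly across $\partial\Omega_\lambda=\{\rho=1/\lambda\}$. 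This yields optimality of $\Omega_\lambda$ without ever building the extension on the horns.

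Your uniqueness argument has a genuine gap. The inclusion $\Omega_\lambda\subset\Omega'$ via the strict inequality $\Div\ov q<\lambda$ on $\Omega_\lambda$ is correct. But the reverse inclusion does not follow from the sentence ``the explicit radial-type structure of the extended $\ov q$ inside each horn rules out $\partial\Omega'$ differing from $\partial\Omega_\lambda\cap D$.'' With your extended field one has $\Div\ov q=\lambda$ \emph{exactly} on $D\setminus\ov\Omega_\lambda$, so the divergence condition places no obstruction on $\Omega'\setminus\Omega_\lambda$; and in the explicit construction of Lemma~\ref{last} the field $q_\lambda$ is the unit outward normal to an entire one-parameter family of arcs of radius $1/\lambda$ (translated along $\nu_j$), so the trace condition $\ov q\cdot\nu_{\Omega'}=1$ alone does not single out $\partial\Omega_\lambda$. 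You would need an additional argument (for instance, that no such translated arc together with pieces of $\partial D$ bounds a set whose full boundary satisfies $\ov q\cdot\nu=1$), which you do not supply. The paper sidesteps this entirely: having shown that $\Omega_\mu$ solves $m(\mu,D)$ for every $\mu\ge h_D$, it invokes the comparison principle of \cite[Lemma~4(i)]{alter2005characterization} to sandwich any solution $\Omega$ of $m(\lambda,D)$ between $\Omega_\mu$ and $\Omega_\nu$ for $h_D<\mu<\lambda<\nu$, and lets $\mu\nearrow\lambda$, $\nu\searrow\lambda$. Note incidentally that the paper's direct argument above would also give uniqueness for free, since equality forces both correction integrals to vanish and $1/\rho\ne\lambda$ a.e.
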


We remark that, according to Theorem \ref{duality2}, the optimality of $\Omega_\l$ for the problem $m(\l,D)$ can be achieved by extending the construction
 of the  calibrating field $\ov q$ defined in  \eqref{calib-Olambda} from  $x\in \Omega_\l$ to $x\in D\setminus \O_\l$. This will be done in the next subsection.  
Here we propose a  direct  proof based on the PDE satisfied by the cut-locus potential of $D$, as a consequence of Theorem \ref{thmcutl}. 
Note that a similar PDE proof appears in  \cite[Proposition 4]{alter2005characterization}  exploiting the solutions of a family of auxiliary problems where a Neumann boundary condition is implicitely imposed on $\partial D$.

\begin{proof}
Let  consider  $F \subset D$ a Borel subset with finite perimeter.  We need need to show that $P(F) -\l |F| \ge P(\O_\l)- \l |\O_\l|$.
Let $\ov q$ be the vector field satisfying \eqref{doptOL2} that we constructed in the proof of Corollary \ref{lemcalibr2}. 
Then, setting $\delta= \frac{1}{\l}$, we have the equalities:
$$P(\O_\l)- \l\, |\O_\l|= \int_{\O_\l} (\Div \ov q -\l)\, dx = \int_{\O_\l} \left( \frac1{\rho} - \frac1{\delta}\right)\, dx.$$
 On the other hand, since $|\ov q|\le 1$, we have:
\begin{align*}  P(F)- \l\, |F| &\ge  \int_F (\Div \ov q -\l)\, dx = \int_F \left( \frac1{\rho} - \frac1{\delta}\right)\, dx \\
& =  \int_{\O_\l} \left( \frac1{\rho} - \frac1{\delta}\right)\, dx +  \int_{F\setminus \O_\l} \left( \frac1{\rho} - \frac1{\delta}\right)\, dx -  \int_{\O_\l\setminus F} \left( \frac1{\rho} - \frac1{\delta}\right)\, dx
\end{align*} 
Since $\frac1{\rho} \ge \frac1{\delta} \ge 0$ on $F\setminus \O_\l$ and $\frac1{\rho} \ge \frac1{\delta} \le 0$ on $\O_\l\setminus F$, we deduce that
$$P(F)- \l\, |F| \ge \int_{\O_\l} \left( \frac1{\rho} - \frac1{\delta}\right)\, dx = P(\O_\l) - \l |\O_\l| ,$$
whence the optimality of $\O_\l$ for $m(\l,D)$ for every $\l\ge h_D$.

Let us prove now the uniqueness of the solution of $m(\l,D)$ for  $\l>h_D$. From the previous arguments, we know now that for all $\mu,\nu$ such that $h_D<\mu<\l<\nu$, the sets $\O_\mu$ and $\O_\nu$ are minimal for $m(\mu,D)$ and $m(\nu,D)$ respectively. Then, by a well known comparison argument (see for instance \cite[Lemma 4 (i)]{alter2005characterization}), we have the inclusions $\O_\mu \subset \O \subset \O_\nu$  for any $\O$ solving $m(\l,D)$.  
The equality $\O=\O_\l$ follows by sending $\mu \nearrow \l$ and $\nu \searrow \l$.

\end{proof}

\subsection{Extending the calibration field to all $D$}
\label{sec:{calib-D}
} 

We will now extend the vector field $\ov q$ prealably defined in subsection \ref{mlOl} (see \eqref{calib-Olambda}) from $\O_\l$ to $D$.
To that aim we are going to  design a unit vector field $q_\lambda$ in $D\setminus \ov{\Omega_\lambda}$ such that 
\begin{align}  \label{qlambda}
|q_\l| \le 1 \; \tinm{in} D\setminus \ov{\Omega_\l}, \qquad
\Div q_\l = \l  \; \tinm{in} D\setminus \ov{\Omega_\l}, \qquad q_\l \cdot \nu_{\O_\l} = 1 \;\tinm{on} \partial \Omega_\lambda\cap D.
\end{align}
The subset $D\setminus \ov{\Omega_\lambda}$  involved in the forthcoming construction is represented in Figure \ref{Fig:omegaLambda} below in the case of a square 
\begin{figure}[H]
\centering
\includegraphics[scale=1]{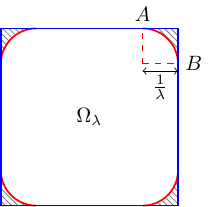}
\hspace{2cm}
\includegraphics[scale=1]{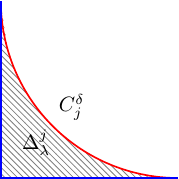}
\caption{$D\setminus \ov{\O_\l}$ represented in dash}\label{Fig:omegaLambda}
\end{figure}

Then, by assigning to $\ov q$ the value $\ov q=q_\l$ on $D\setminus \O\l$, one check easily that the resulting $\ov q\in L^\infty(D;\R^2)$ 
satisfies  the optimality conditions given in the assertion (ii) of Theorem \ref{duality2}  when $\O=\O_\l$, namely:
 \begin{align}
& |\ov q| \le 1 \quad\tinm{a.e. in} D, && 0 \le \Div \ov q \le \lambda \quad\tinm{a.e. in} D,\label{doptOL1}\\
& \ov q\cdot \nu_{\Omega_\lambda} =1 \quad\tinm{$\HH^1$-a.e. on} \partial \Omega_\lambda,  && \Div \ov q =\lambda \quad\tinm{a.e. in} D\setminus \Omega_\lambda .\label{doptOL2}
\end{align}
Accordingly, we will obtain another proof for the optimality of $\O_\l$.
The global construction of $\ov q$ concerns the three subsets  represented in Figure \ref{finclusion} (see Figure \ref{Fig:EomegaLambda} in the case of an ellipsoid).
For a square,  infinite curvature occurs at the corners, so that $\kappa_\infty(\partial D) = +\infty$ and the inclusion $\O_\l \subset D$ is strict for every $\l\ge h_D$.

\begin{figure}[H]
\centering
\qquad\quad
\includegraphics[scale=0.8]{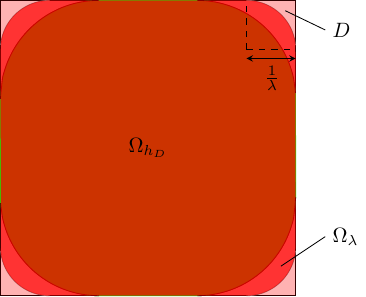}
\caption{The inclusion $\Omega_{h_D}\subset \Omega_\lambda\subset D$.}\label{finclusion}
\end{figure}

\bigskip
We end this subsection and conclude the paper by giving:

\begin{enumerate}
\item [-] the detailed construction of the field $q_\l$ satisfying \eqref{qlambda} (see Lemma \ref{last}).
\item [-] the closed form of $\ov q$ in case of a square and of an ellipsoid (see Example \ref{qexpli}) .

\end{enumerate}

\begin{lemma}\label{last}
There exists a vector field $q_\lambda$ in $D\setminus\ov{\Omega_\lambda}$ satisfying \eqref{qlambda}

\end{lemma}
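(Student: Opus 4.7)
The plan is to construct $q_\lambda$ in $D\setminus\overline{\Omega_\lambda}$ by exploiting the geometric description of this region as the outer collar of the convex set $\overline{D^\delta}$, where $\delta=1/\lambda$. Since $\overline{D^\delta}$ is closed convex, the projection $c(x):=\Pi_{\overline{D^\delta}}(x)$ is single-valued on $D\setminus\overline{\Omega_\lambda}$, and writing $x=c(x)+s(x)\,N(x)$ with $s(x)=|x-c(x)|\ge\delta$ and $N(x)$ a unit vector in the outward normal cone of $\overline{D^\delta}$ at $c(x)$, one obtains a foliation of $D\setminus\overline{\Omega_\lambda}$ by segments $\{c+\sigma N\}_{\sigma\in[\delta,s_{\max}(c,N)]}$; each such segment starts on $\partial\Omega_\lambda\cap D$ (where $N=\nu_{\Omega_\lambda}$, so that the boundary condition $q_\lambda\cdot\nu_{\Omega_\lambda}=1$ translates into $q_\lambda=N$ on the inner arc) and ends on $\partial D$.

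On each ray we will look for $q_\lambda=A(c,s)\,N(c)+B(c,s)\,T(c)$ with $T$ a unit tangent to $\partial D^\delta$ at a regular point $c$. In the curvilinear coordinates $(c,s)$, whose Jacobian is $1+s\,\kappa_\delta(c)$ with $\kappa_\delta$ the curvature of $\partial D^\delta$, the divergence identity reads
\begin{equation*}
\Div q_\lambda=\partial_s A+\frac{A\,\kappa_\delta(c)}{1+s\,\kappa_\delta(c)}+\frac{\partial_c B}{1+s\,\kappa_\delta(c)},
\end{equation*}
so that the prescription $\Div q_\lambda=\lambda$ together with the initial data $A(c,\delta)=1$, $B(c,\delta)=0$ becomes a linear first-order PDE which we integrate along the characteristics $s\mapsto c+sN(c)$. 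Around a vertex $c^*$ of $\partial D^\delta$ we will switch to polar coordinates centered at $c^*$ and solve the analogous sectorial problem, then glue to the adjacent strips along the angular rays delimiting the outer normal cone. Since the pieces share the same datum on the inner arc $\{s=\delta\}$, the normal trace of $q_\lambda$ is continuous across the interior interfaces and no singular measure arises in $\Div q_\lambda$; the construction then matches the calibration on $\Omega_\lambda$ built in \eqref{calib-Olambda} to produce a global admissible $\bar q$ on $D$.

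The hard part will be to enforce the pointwise bound $|q_\lambda|\le 1$. A purely normal ansatz ($B\equiv 0$) is ruled out: the resulting scalar ODE for $A$ starts at $A(\delta)=1$ and produces $A>1$ for all $s>\delta$, so a tangential rotation $B$ is indispensable and its amplitude must be finely tuned to keep $A^2+B^2\le 1$ along every ray. The crucial input that makes this tuning feasible is the optimality of $\Omega_\lambda$ for $m(\lambda,D)$ established in Corollary \ref{opti-mlD}: integrating $\Div q_\lambda=\lambda$ over $D\setminus\overline{\Omega_\lambda}$ and invoking $J_\lambda(D)\ge J_\lambda(\Omega_\lambda)$ yields the isoperimetric-type flux inequality
\begin{equation*}
\HH^1(\partial D\cap\overline{D\setminus\Omega_\lambda})\ \ge\ \lambda\,|D\setminus\Omega_\lambda|+\HH^1(\partial\Omega_\lambda\cap D),
\end{equation*}
which is exactly the boundary flux budget on $\partial D$ required to accommodate a unit-norm field with divergence $\lambda$. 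With this budget in hand, the ray-by-ray ODE analysis closes by choosing $B$ to rotate $q_\lambda$ away from the pure normal direction at precisely the rate dictated by the Jacobian $1+s\,\kappa_\delta$, and the closed-form calculations listed in Example \ref{qexpli} for the square and the ellipse serve as consistency checks for the general construction.
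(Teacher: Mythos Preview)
Your proposal is a plan rather than a proof, and it stalls precisely at the step you yourself flag as hard: enforcing $|q_\lambda|\le 1$. The integral flux inequality you extract from Corollary~\ref{opti-mlD} is only a necessary \emph{global} budget; it does not tell you how to choose the tangential component $B(c,s)$ pointwise, and the sentence ``the ray-by-ray ODE analysis closes by choosing $B$ to rotate $q_\lambda$ away from the pure normal direction at precisely the rate dictated by the Jacobian $1+s\kappa_\delta$'' is an assertion, not a construction. Without an explicit $B$ and a verification that $A^2+B^2\le 1$ holds along every ray and in every angular sector, the argument is incomplete. (Note also that at a vertex of $\partial D^\delta$ the curvature $\kappa_\delta$ is a Dirac mass, so your curvilinear divergence formula is not literally applicable there; the sectorial problem you allude to is the whole problem, since $D\setminus\overline{\Omega_\lambda}$ consists entirely of such corner sectors.)

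The paper sidesteps this difficulty by a geometric ansatz you have not hit upon. The region $D\setminus\overline{\Omega_\lambda}$ decomposes into components $\Delta_\lambda^j$, one for each singular point $x_j\in\partial_s D^\delta$, and in each $\Delta_\lambda^j$ the paper takes $q_\lambda$ to be the \emph{unit outward normal to a family of circles of radius $\delta=1/\lambda$ whose centres slide along the angular bisector $x_j+\RR_+\nu_j$}. Concretely (after rescaling by $\lambda$ and centring at $x_j$), one sets
\[
q_1^j(s,t)=\bigl(s-a_j\,\nu_j^s,\ t-a_j\,\nu_j^t\bigr),\qquad
a_j(s,t)=s\nu_j^s+t\nu_j^t-\sqrt{1-(s\nu_j^t-t\nu_j^s)^2},
\]
so that $|q_1^j|\equiv 1$ by design and $\Div q_1^j=1$ reduces to the one-line check $\nabla a_j\cdot\nu_j=1$. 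There is no ODE to integrate and no norm bound to fight for: the field is a unit field from the outset, and the divergence and interface conditions drop out of the closed formula. Your framework is not wrong, but you are missing the key idea of centring the circles on the bisector rather than at the vertex; that is what converts the ``hard part'' into a two-line computation.
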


\begin{proof}
From now  on, we set $\delta=1/\lambda$ and  adapt the notation used at the beginning of Section \ref{scutlocus}  to
the convex subset  $ D^\delta$. Recall that we denote by $\Pi_\delta(x)$ the unique projection of $x$ on $\ov{D^\delta}$. The singular part of its boundary $\partial_s D^\delta$ has  at most countably many points, and we set
$$ \partial_s{D^\delta} = \{ x_j : j\in J\}\quad \text{ (being $J$  either empty, or finite, or countable)} .$$
 For each point $x\in \partial_s D^\delta$, the normal cone of $D^\delta$ at the point, denoted by $N_{D^\delta}(x)$, is generated by the two limit vectors $\nu_{D^\delta}^-(x)$ and $\nu_{D^\delta}^+(x)$. $\varphi^\delta_-(x)$ and $\varphi^\delta_+(x)$ are the corresponding angles of the two vectors in $S^1$. Accordingly we define the sets $N^\delta(x)$, $C^\delta(x)$, and $M_{\delta'}^\delta(x)$ as follows:
\begin{gather*}
\quad N^\delta(x):= x + \aco{ p\in N_{D^\delta}(x) \sepa |p|\le \delta }, \tinm{for} x \in \ov{D^\delta}, \\
 \quad C^\delta(x):= x+\aco{p\in N_{D^\delta}(x) \sepa  |p|=\delta }, \tinm{for} x\in \partial D^\delta,\\
M_{\delta'}^\delta(x):= x+ \aco{ p\in N_{D^\delta}(x) \sepa \left\langle p, \nu_{D^\delta}^\pm (x) \right\rangle \le \delta-\delta' }, \tinm{for} x\in \ov{D^\delta},\; 0\le \delta' <\delta <R. 
\end{gather*}
\begin{figure}[H]
\centering
\includegraphics[scale=0.6]{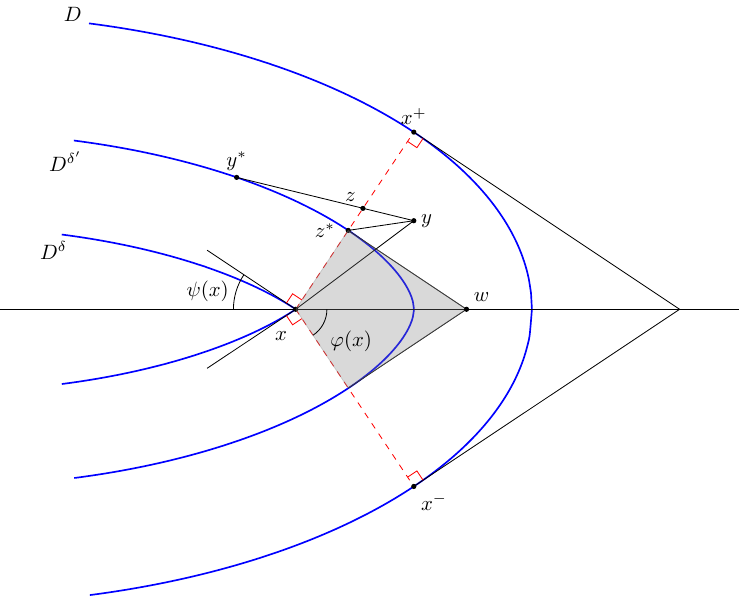}
\caption{ The kite in gray color represents the set $M_{\delta'}^\delta$(x).}\label{fig:proof}
\end{figure}

In turn, $\Omega_\lambda$ can be characterized, in terms of  $N^\delta(x)$ and $C^\delta(x)$, as follows:
\begin{align}
\ov{\Omega_\lambda} = \bigcup_{x\in D^\delta} N^\delta(x), \qquad
\partial\Omega_\lambda = \bigcup_{x\in\partial D^\delta} C^\delta(x).\label{eq:ChOmegaLam}
\end{align}

We remark that $\{N_{D^\delta}(x) \sepa x\in D^\delta\}$ is a family of disjoint sets since  any point $y\in N_{D^\delta}(x)$ satisfies $\proj_\delta(y)=x$.
Thus $\{N^\delta(x): x\in D^\delta\}$ determines a partition of $\ov{\Omega_\lambda}$.
On the other hand, when $x$ is a regular point of $\partial D^\delta$, i.e. $x\in\partial_r D^\delta$, then the normal cone $N_{D^\delta}(x)$  reduces to only one direction and $C^\delta(x)=\{x^*\}$ where $x^*$ is the unique projection of $x$ on $D^c$. Thus $x\notin \Lambda_D$ and $x^*$ belongs to $\partial D\cap\partial\Omega_\l$. If $x\in \partial_s D^\delta$, the arc $C^\delta(x)$  given by
\begin{align*}
C^\delta(x)= x+\left(\operatorname{cone}\{ \nu_{D^\delta}^-(x), \nu_{D^\delta}^+(x)\}\cap \partial {B(0,\delta)} \right)
\end{align*}
is determined by the angular interval $(\varphi_-^\delta(x),\varphi_+^\delta(x))$, which by the convexity of $D^\delta$ must be less than $\pi$.
Accordingly this arc is a connected piece  of the free boundary $\partial \O_\l \cap D$. Conversely, every $y\in \partial\Omega_\lambda$ can be decomposed as $y = \proj_\delta (y) + (y-\proj_\delta (y))$ with $|y- \proj_\delta (y)| =\delta$, hence $y\in C^\delta(\proj_\delta (y))$. This confirms the second equality in \eqref{eq:ChOmegaLam} where  $\{C^\delta(x) \sepa x\in\partial D^\delta\}$ is a partition of $\partial\Omega_\lambda$. 

\medskip
To shorten the notation, for each $x_j\in \partial_s D^\delta$ where $j\in J$, we set 
\begin{align}
 N^\delta_j :=& \; N^\delta(x_j), & C^\delta_j :=& \; C^\delta(x_j), \label{set:Cj}\\
 \nu_j := &\; \frac{\nu_{D^\delta}^+(x_j)+ \nu_{D^\delta}^-(x_j)}{2}, & \varphi_j:=&\; \frac{\varphi_+^\delta(x_j) - \varphi_-^\delta(x_j)}{2}.\label{set:nuj}
\end{align}
We point out that $\partial\Omega_\lambda\cap D =\bigcup_{j \in J} C^\delta_j$, where each  $C^\delta_j$ is an arc of radius $\delta$ determined  by a triple 
$(x_j,\varphi_j,\nu_j)\in \partial_s D^\delta\times(0,\pi/2)\times S^1$,  which represent respectively the center, the  angle, and an oriented unit vector 
(see the right hand side of Figure \ref{Fig:EomegaLambda}).

Now, let us introduce the regions where we want to construct the vector field $q_\lambda$,
\begin{gather}
\tinm{for} x\in D^\delta,\quad M^\delta_0 (x) = x+ \aco{ p\in N_{D^\delta}(x) \sepa \ps{ p, \nu_{D^\delta}^+(x) } \le \delta,\;  \ps{ p, \nu_{D^\delta}^-(x) } \le \delta },\label{set:M}\\
\Sigma_\lambda:= \bigcup_{x\in D^\delta} M^\delta_0(x), \qquad \qquad \Delta_\lambda:= \Sigma_\lambda\setminus\ov{\Omega_\lambda}. \label{set:sigmal}
\end{gather}

\begin{figure}[H]
\centering
\includegraphics[scale=0.97]{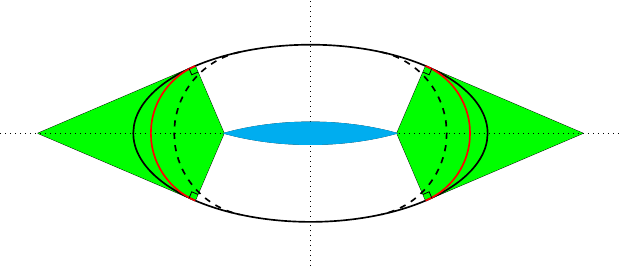}
\includegraphics[scale=1]{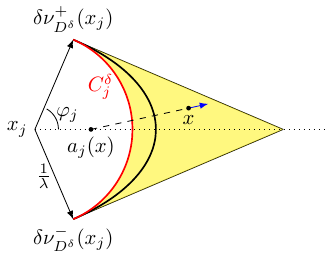}
\caption{$\Sigma_\l$ in green and  $\Delta_\l=\Sigma_\l\setminus\ov{\O_\l}$ in yellow.}\label{Fig:EomegaLambda}
\end{figure}

Figure \ref{Fig:EomegaLambda} gives the flavour  of the construction when $D$ is an ellipse domain. The thick black curve stands for the boundary of $D$ while the dashed curve represents the boundary of the Cheeger set $\Omega_{h_D}$ of $D$. The cyan region is the set $D^\delta$ whose $\delta$-enlargement gives $\Omega_\lambda$. The boundary of $\Omega_\lambda$ appears in red color. The sets $M^\delta_0(x_j)$ are in green background. The region in light yellow background is a component of $\Delta_\lambda$. $C^\delta_j$ are red arcs of radius $\delta$, centered at $x_j$, determined by angle $\varphi_j$ and the oriented unit vector $\nu_j$. $\{C^\delta_j\}$ are the boundaries of $\Omega_\lambda$ inside $D$, i.e. $\partial\Omega_\lambda\cap D$.

The set $\ov D$ is contained in $\Sigma_\lambda$. In fact, for every $y\in \ov D$, let $x= \proj_\delta (y)$ and $p=y-x$. Since $|x-(x+\delta \nu_{D^\delta}^\pm(x))|=\delta=d(x,D^c)$ and  $x+\delta \nu_{D^\delta}^\pm(x)\in \partial D$, we have that $x+\delta \nu_{D^\delta}^\pm(x)$ are projections of $x$ on $D^c$. As $\ov D$ is convex, these projections implies that for all $z\in \ov D$,
\begin{align*}
\ps{ x- (x+\delta \nu_{D^\delta}^-(x)), z - (x+\delta \nu_{D^\delta}^-(x)) } \ge 0,\\
\ps{ x- (x+\delta \nu_{D^\delta}^+(x)), z - (x+\delta \nu_{D^\delta}^+(x)) } \ge 0.
\end{align*}
Therefore, for $z=y$, we obtain 
$\langle p, \nu_{D^\delta}^\pm(x) \rangle \le \delta$. Hence, by definition \eqref{set:M}, $y=x+p\in M^\delta_0(x)$. In particular, when $x\in \partial_r D^\delta$, the left and right limits are the same, i.e. $\nu_{D^\delta}^- (x) = \nu_{D^\delta}^+(x) =: \nu_{D^\delta} (x)$, while $N^\delta(x)$ coincides with $M^\delta_0(x)$ and they are folded up to be a segment. At that moment, $p$ and $\nu_{D^\delta} (x)$ are co-linear, then, $\langle p, \nu_{D^\delta} (x)\rangle = |p| \le \delta$. This is to say  that $y\in N^\delta(x) = M^\delta_0(x)$. Of course, when $y\in D^\delta$ or $y= \proj_\delta (y)$, the associated cones degenerate and  shrink to a point. We get $y= \proj_\delta (y) =  N^\delta(y) = M^\delta_0(y)$.

 We notice that since the angle of $N_{D^\delta}(x)$ is always less than $\pi$, $M^\delta_0(x)$ is bounded for every $x\in D^\delta$. $\Sigma_\lambda$ is then bounded. It is clear that $\{M^\delta_0(x) \sepa x\in D^\delta\}$ is a partition of $\Sigma_\lambda$ and hence, $\Delta_\lambda$ admits a decomposition,
\begin{align}
\Delta_\lambda = \bigcup_{x\in D^\delta} M^\delta_0(x) \setminus\ov \Omega_\lambda = \bigcup_{x\in \partial_s  D^\delta} M^\delta_0(x) \setminus \ov{\Omega_\lambda}.
\end{align}
For short, we set
\begin{align}
\Delta_\lambda ^j=  M^\delta_0(x_j) \setminus\ov \Omega_\lambda \tinm{for some} x_j\in\partial_s D^\delta, \qquad \tinm{and}\quad \Delta_\lambda = \bigcup_{j\in J} \Delta_\lambda ^j.
\end{align}
$ \Delta_\lambda$ has  at most countable many disjoint components. Figures \ref{Fig:EomegaLambda} and \ref{Fig:omegaLambda} are illustrating examples. In Figure \ref{Fig:EomegaLambda}, $\Sigma_\lambda$ strictly contains $D$ whereas $\Sigma_\lambda=D$  in case of Figure \ref{Fig:omegaLambda}.

We now explicitly construct the vector field $q_\lambda$  in $\Delta_\lambda$ satisfying conditions \eqref{qlambda}. In each component $\Delta^j_\lambda$, we set $x=(s,t)$ and
\begin{align}
\Delta^j_\lambda \ni (s,t)\mapsto q_1^j(s,t):= (s - a_j(s,t)\nu_j^s, t - a_j(s,t)\nu_j^t),\label{def:q1ext}
\end{align}
where $\nu_j=(\nu_j^s,\nu_j^t)$ is the oriented unit vector defined $C^\delta_j$ and $a_j(s,t)\ge 0$ such that $\Div q_1^j = 1$. In fact, $q_1^j$ is the unit normal of the ball of radius 1 centered at point $a_j\nu_j$,
\begin{align}
[s - a_j(s,t)\nu_j^s]^2 + [t - a_j(s,t)\nu_j^t]^2 = 1. \label{expansionq1}
\end{align}
We observe also that 
\begin{align}
\Div q_1^j(s,t) = 1 \Longleftrightarrow \partial_s a_j(s,t) \nu_j^s + \partial_t a_j(s,t)\nu_j^t = 1. \label{expansionq2}
\end{align}
From equation \eqref{expansionq1}, we can find out explicitly $a_j$ in function of $(s,t)$, and in such a way, \eqref{expansionq2} is fulfilled,
\begin{align}
a_j(s,t)= s\nu_j^s + t\nu_j^t - \sqrt{1 - (s\nu_j^t - t\nu_j^s)^2}.\label{def:q1trans}
\end{align}
Therefore, $q_\lambda(s,t)=q_\lambda^j(s,t):= q_1^j(\lambda s,\lambda t)$ in $\Delta^j_\lambda$ as we expected. This completes the proof.
\end{proof}

\bigskip

\begin{example}\label{qexpli}
We precise here a calibrating field $\ov q$ when $D$ is  a square or  an ellipsoid.

\medskip
\begin{itemize}[leftmargin=4mm]
\item[(a)] In  case $D=[-1/2,1/2]^2$,  $\Sigma_\lambda$ coincides with $D$, see Figure \ref{Fig:omegaLambda}. The boundary of $D^\delta$ has 4 singular points and $\partial\Omega_\lambda\cap D = \cup_{j=1}^4 C^\delta_j$. The oriented vectors of $C^\delta_j$ are $(\pm 1/\sqrt{2},\pm 1/\sqrt{2})$. Take $\nu_1 = ( 1/\sqrt{2}, 1/\sqrt{2})$ for example, it is easy to  explicit  $q_\lambda$ in $\Delta_\lambda^1$. Thanks to \eqref{def:q1trans} and \eqref{def:q1ext}, we have, for $(s,t)\in \Delta^1_\lambda $,
\begin{align*}
a_1(s,t) = \frac{s+t}{\sqrt{2}} - \sqrt{1-\frac{(s-t)^2}{2}},
\qquad
q_1^1(s,t)= \left(s - \frac{a_1(s,t)}{\sqrt{2}}, t - \frac{a_1(s,t)}{\sqrt{2}} \right).
\end{align*}
Then, the wished construction of $q_\lambda$ in $\Delta_\l^1$ is given by  $q_\lambda(s,t)=q_\lambda^1(s,t)= q_1^1(\lambda s,\lambda t)$. 
By using symmetries, the expression of  $q_\lambda$ can be deduced in the other components of $\Delta_\lambda$. On the other hand, the field  $\ov q$ 
 is given in $\O_\l\setminus \O_{h_D}$ by $\ov q=-\frac{\nabla \rho}{|\nabla \rho|}$ which is described in Example \ref{ExRho}.

\item[(b)] In case $D$ is given by an ellipse of standard form, see Figure \ref{Fig:EomegaLambda}, $D$ is strictly contained in $\Sigma_\lambda$. $\Delta_\lambda$ now has two components and the boundary of $\Omega_\lambda$ inside $D$ is the union of arcs $C^\delta_1$ and $C^\delta_2$ whose oriented vectors are $(\pm 1,0)$. Take $\nu_1=(1,0)$ for example to construct $q_\lambda$ in $\Delta^1_\lambda$, we get, for $(s,t)\in \Delta^1_\lambda $,
\begin{align*}
a_1(s,t) = s - \sqrt{1-t^2}, \qquad q_1^1(s,t)= \left(\sqrt{1-t^2}, t \right).
\end{align*}
Therefore, we obtain $q_\lambda(s,t)$ by scaling $q_1^1(s,t)$, i.e. $q_\lambda(s,t)=q_\lambda^1(s,t)= q_1^1(\lambda s,\lambda t)$.
\end{itemize}
\end{example}

In summary, the vector field $\ov q$ can be built with $q_{h_D}$ on $\Omega_{h_D}$, with $q_\rho$ on $\Omega_\lambda\setminus \ov{\Omega_{h_D}}$ by means of cut-locus potential $\rho$, then glued with $q_\lambda$ on $D\setminus \ov{\Omega_\lambda}$  so that we can obtain a calibrating field $\ov q$ for $\Omega_\lambda$. We remark  that the  construction of the calibrating field $\ov q$ can be done in the  domain $\Sigma_\lambda$ (see \eqref{set:sigmal}) which, in general is larger than $D$.

\appendix
\section{Some techical lemmas in $\R^2$}\label{A} 

The following preparatory lemmas will be used in Section \ref{scutlocus}.
We recall the definitions of functions $\gamma$ and $\zeta$ in \eqref{ndis} and also of the cut-locus  $\ov\Lambda$ in Section \ref{sec_cutlocus}.

\begin{lemma}\label{stmonotone}
Let $D$ be a bounded convex set in $\RR^N$. Then
\begin{itemize}[leftmargin=5mm]
\item[(i)] the function $(x,\delta) \to   d(x, D^\delta)$ is continuous on $\ov D  \times [0,R)$. 
 
 \item [(ii)] Given $x\in D$ such that $d(x,D^\delta)\le \delta$ for some $\delta>0$. Then, for every $\delta'$ such that $0<\delta' < \delta$, we have $d(x,D^{\delta'}) < \delta'$.
\end{itemize}
\end{lemma}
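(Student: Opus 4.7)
The plan is to base everything on the well-known concavity of $x \mapsto d(x, D^c)$ on the convex set $\overline{D}$: for $y = (1-t) x_1 + t x_2$ and balls $B(x_i, r_i) \subset D$, the inclusion $B(y, (1-t)r_1 + t r_2) \subset D$ (Minkowski combination) yields $d(y, D^c) \ge (1-t)\, d(x_1, D^c) + t\, d(x_2, D^c)$. Coupled with the obvious monotonicity $\delta_1 \le \delta_2 \Rightarrow D^{\delta_2} \subset D^{\delta_1}$ and the $1$-Lipschitz dependence $|d(x, D^\delta) - d(x', D^\delta)| \le |x - x'|$, this concavity supplies the quantitative comparison between $D^\delta$ and $D^{\delta'}$ that both parts of the lemma demand.

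For (i), joint continuity will follow from separate continuity in $\delta$ at fixed $x$ via the standard estimate $|d(x_n, D^{\delta_n}) - d(x, D^{\delta_0})| \le |x_n - x| + |d(x, D^{\delta_n}) - d(x, D^{\delta_0})|$, so I split into two one-sided bounds as $\delta_n \to \delta_0 \in [0, R)$. For lower semicontinuity I extract a cluster point $z^*$ of minimizers $z_n \in \overline{D^{\delta_n}}$ realizing $|x - z_n| = d(x, D^{\delta_n})$; continuity of $d(\cdot, D^c)$ together with $d(z_n, D^c) \ge \delta_n$ forces $d(z^*, D^c) \ge \delta_0$, hence $z^* \in \overline{D^{\delta_0}}$ and $d(x, D^{\delta_0}) \le |x - z^*| = \lim |x - z_n|$. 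For upper semicontinuity I use the assumption $\delta_0 < R$ to fix $w \in D$ with $d(w, D^c) = \delta_0 + \eta$ for some $\eta > 0$; then, for a minimizer $z \in \overline{D^{\delta_0}}$ of $|x - z| = d(x, D^{\delta_0})$, the interpolants $z_n := (1 - s_n) z + s_n w$ with $s_n := \max\{0,\ (\delta_n - \delta_0)/\eta\}$ satisfy $d(z_n, D^c) \ge \delta_0 + s_n \eta \ge \delta_n$ by concavity, so $z_n \in \overline{D^{\delta_n}}$ and $d(x, D^{\delta_n}) \le |x - z_n| \to d(x, D^{\delta_0})$.

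For (ii), I take a projection $z \in \overline{D^\delta}$ of $x$, so that $|x - z| = d(x, D^\delta) \le \delta$ and $d(z, D^c) \ge \delta$, and set $\alpha := d(x, D^c)$. Since $x$ lies in the open set $D$, we have $\alpha > 0$. If $\alpha \ge \delta'$, then $x \in \overline{D^{\delta'}}$ and $d(x, D^{\delta'}) = 0 < \delta'$. Otherwise $\alpha \in (0, \delta')$, and I define
\[
t_0 := \frac{\delta' - \alpha}{\delta - \alpha} \in (0, 1), \qquad y := (1 - t_0) x + t_0 z.
\]
Concavity yields $d(y, D^c) \ge (1 - t_0)\alpha + t_0 \delta = \delta'$, so $y \in \overline{D^{\delta'}}$. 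Moreover $|x - y| = t_0 |x - z| \le t_0 \delta$, and a one-line computation gives $\delta' - t_0 \delta = \alpha(\delta - \delta')/(\delta - \alpha) > 0$, so $d(x, D^{\delta'}) \le |x - y| < \delta'$. The delicate step in both parts is ensuring the strict positive gap produced by the concavity-based interpolation: this comes from having a \emph{reservoir of slack} inside $D$ — the hypothesis $\delta_0 < R$ in (i), which lets me pick $w$ strictly deeper than $z$, and the openness of $D$ in (ii), which supplies $\alpha > 0$. Without either of these, the interpolation would only yield a non-strict inequality, and indeed the upper semicontinuity in (i) is expected to fail at the endpoint $\delta = R$.
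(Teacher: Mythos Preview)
Your proof is correct, and it is organized around a single idea—the concavity of $d(\cdot,D^c)$ on $\overline D$—used uniformly in both parts, whereas the paper mixes two different devices. For the upper semicontinuity in (i), the paper simply picks an $\varepsilon$-approximate minimizer $x_\varepsilon$ lying in the \emph{open} set $D^{\delta_0}$; since $d(x_\varepsilon,D^c)>\delta_0$, one has $x_\varepsilon\in D^{\delta_n}$ for all large $n$ without any interpolation. Your route instead keeps the exact projection $z\in\overline{D^{\delta_0}}$ and pushes it inward along the segment toward a deeper point $w$ via concavity; this is slightly heavier but has the advantage of being the same mechanism you re-use in (ii). For (ii), the paper argues geometrically by nesting balls: it places a ball $B(z,\delta')$ inside the ball $B(\Pi_\delta(x),\delta)\subset D$ so that $z\in D^{\delta'}$ and $|x-z|<\delta'$. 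Your argument replaces this ball-in-ball picture by interpolating on the segment $[x,\Pi_\delta(x)]$ and invoking concavity directly, which makes the role of the strict positivity $\alpha=d(x,D^c)>0$ (your ``reservoir of slack'') completely transparent in the final inequality $\delta'-t_0\delta=\alpha(\delta-\delta')/(\delta-\alpha)>0$.

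One small point worth making explicit: in both parts you pass from $d(z_n,D^c)\ge\delta_n$ (resp.\ $d(y,D^c)\ge\delta'$) to $z_n\in\overline{D^{\delta_n}}$ (resp.\ $y\in\overline{D^{\delta'}}$). This identification $\overline{D^\delta}=\{d(\cdot,D^c)\ge\delta\}$ for $\delta<R$ is itself a consequence of the concavity you set up at the outset (interpolate any boundary point toward a strictly deeper point), so it fits seamlessly into your framework, but it deserves a sentence.
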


\begin{proof}  \ Let us prove (i). Since the functions $d(\cdot,D^\delta)$ are $1$-Lipschitz, it is enough to show that, for any fixed $x\in \ov D$,
the function  $t \to d(x,D^t)$ is continuous on $[0,R)$. 
Let $t_n, t\in [0,R_D)$ such that $t_n \to t$. It exist a unique $y_n \in \ov {D^{t_n}}$ such that $|x-y_n|= d(x, D^{t_n}$. Up to a subsequence, we can assume that
$y_n \to y_*$ for some $y_*\in \ov D$. Then $d(y_n, D^c)\ge  t_n$ implies that $d(y_*, D^c)\ge t$ while $|x-y_n|\to |x-y_*|$. 
It follows that $y_*\in \ov { D^t}$ \footnote{Here we use the fact that $\ov {D^t}=\{y\in D\, :\, d(y,D^c)\ge t\}$. To show the non trivial inclusion of the second set in the first one, we consider  $y_n=  (1-\frac1{n}) y + \frac1{n} z$ where $d(y,D^c)= t$  and $z\in D^t$. Then by the concavity of the function $d(\cdot,D^c)$ on $D$,
we have that $d(y_n,D^c) >t$  while  $y_n\to y$ whence $y\in \ov { D^t}$. }, hence $|x-y_*|= \lim_n d(x, D^{t_n})  \ge d(x,D^t)$. 
In the opposite direction, let  $\e>0$ and $x_\e \in D^t$ such that $|x-x_\e|\le d(x, D^t)+\e$.  Since $d(x_\e, D^c) >t$, we have $x_\e\in D^t_n$ for large $n$
so that $ \limsup_n d(x, D^{t_n}) \le |x-x_\e|\le  d(x, D^t)+\e$. By sending $\e\to 0$, we conclude that $\limsup_n d(x, D^{t_n})\le d(x,D^t)$.
The wished continuity property is proved.

\medskip 
Let us prove now the assertion (ii). 
Without any loss of generality, we can assume that $x\notin D^{\delta'}$, hence $x\notin \ov{D^\delta}$, since $\ov{D^\delta}\subset D^{\delta'}$). Let us denote
$x_\delta= \proj_\delta(x)$ and   $x_{\delta'}= \proj_{\delta'}(x')$. As $d(x,D^\delta) \le \delta$,  we have  $d(x,x_\delta)\le \delta$ while
$B(x_\delta,\delta)\subset D$. 
We claim that we can always find out a ball $B(z,\delta')$ such that: 
\begin{equation}\label{claim:z}
\ov{B(z,\delta')} \subset B(x_\delta,\delta)\ ,\quad d(x,z) \le \delta'\, . 
\end{equation}
 If the claim is true, then $z\in D^{\delta'}$ since the first inclusion in \eqref{claim:z} implies that $\ov{B(z,\delta')} \subset D$. 
 On the other hand, $x_{\delta'}$ is the unique point of $\partial D^{\delta'}$ such that   $d(x,D^{\delta'})= |x-x_{\delta'}|$. The wished strict inequality
 follows, namely:
 \begin{align*}
d(x,D^{\delta'}) = |x - x_{\delta'}| < |x - z| \le \delta'.
\end{align*}
Let us prove \eqref{claim:z}. If $d(x,D^\delta) \le \delta'$, we can take $z=x_\delta$ so that $x\in \ov{B(z,\delta')}\subset B(x_\delta,\delta)$.

Let us now consider the case where   $d(x,D^\delta)>\delta'$. Then we choose  $z$ on the segment $[x,x_\delta]$ as follows:
\begin{align*}
z=\left(1-\frac{\delta'}{\delta}\right) x +\frac{\delta'}{\delta} x_\delta.
\end{align*}
Since $x\notin \ov{D^\delta}$, we have $d(x,x_\delta)< \delta$. Therefore
$ d(x,z) < \delta'$ and $d(x_\delta,z) <  \delta-\delta' .$ 
From the previous inequality  and by using the triangle inequality, we deduce  the inclusion $\ov{B(z,\delta')} \subset B(x_\delta,\delta)$. 
It follows that $z$ satisfies \eqref{claim:z}. \end{proof}

\begin{lemma}
\label{Mdd}
Recalling the definition \eqref{set:M} for the set-valued function $M_0^\delta(x)$, we have:
\begin{itemize}[leftmargin=5mm]
\item[(i)] Let $\delta, \delta'\in [0,R_D]$ such that $ \delta' < \delta$. Then
 $D^\delta = \aco{ x\in D^{\delta'} \sepa B(x,\delta-\delta')\subset D_{\delta'}}$. 
\medskip
\item[(ii)] Let $x\in \partial_r D^\delta$ and $z\in \partial D\cap M_0^\delta(x)$ (see \eqref{set:M}). Then, for every $y\in M_0^\delta(x)$, we have  $|y - z| = d(y,D^c)$.
In particular, $|x - z| =d(x,D^c)=\delta$.

\medskip
\item[(iii)] For each $\delta' <\delta$, it holds
\begin{align*}
\partial_s D^{\delta'} \subset \bigcup_{x\in\partial_s D^\delta} M_0^\delta(x).
\end{align*}
As consequence, for every $x\in \ov D$, if $\proj_\delta(x)$ is in $\partial_r D^\delta$ then $\proj_{\delta'}(x)$ belongs to $\partial_r D^{\delta'}$ for all $\delta'<\delta$.
\end{itemize}
\end{lemma}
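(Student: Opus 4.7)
The plan is to address parts \textit{(i)}, \textit{(ii)}, \textit{(iii)} in order, using earlier parts to establish later ones.

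For \textit{(i)}, I use the characterization $z \in D^\alpha \iff d(z, D^c) > \alpha$. The forward inclusion follows from a triangle-inequality computation: for $x \in D^\delta$, $y \in B(x, \delta - \delta')$, and $w \in D^c$, one has $|y - w| \geq |x - w| - |x - y| > \delta - (\delta - \delta') = \delta'$, so $y \in D^{\delta'}$. For the reverse inclusion, I argue by contradiction: if $d(x, D^c) < \delta$, take $z \in D^c$ achieving the minimum and choose $y$ on $[x, z]$ with $d(x, D^c) - \delta' < |y - x| < \delta - \delta'$; then $y \in B(x, \delta - \delta')$ but $|y - z| < \delta'$ forces $y \notin D^{\delta'}$, contradicting the hypothesis.

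For \textit{(ii)}, regularity of $x \in \partial_r D^\delta$ collapses $N_{D^\delta}(x)$ to the single ray $\RR_+ \nu_{D^\delta}(x)$, so $M_0^\delta(x) = \{x + s\nu_{D^\delta}(x) : 0 \leq s \leq \delta\}$. Since $x \in \partial_r D^\delta$, the closest point in $D^c$ from $x$ is unique and equals $z = x + \delta \nu_{D^\delta}(x) \in \partial D$, and this is the only intersection of $\partial D$ with $M_0^\delta(x)$. For $y = x + s\nu_{D^\delta}(x)$, the reverse triangle inequality gives $|w - y| \geq |w - x| - s \geq \delta - s$ for every $w \in D^c$ (with equality only at $w = z$), so $d(y, D^c) = \delta - s = |y - z|$.

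For \textit{(iii)}, I first derive the consequence directly from \textit{(ii)} and then use it to obtain the main inclusion. If $\proj_\delta(x_0) = x \in \partial_r D^\delta$, then $x_0 = x + t\nu_{D^\delta}(x)$ for some $t \geq 0$; applying \textit{(ii)} identifies $w := x + (\delta - \delta')\nu_{D^\delta}(x) \in \partial D^{\delta'}$ as having unique closest $D^c$-point $z$, hence $w \in \partial_r D^{\delta'}$. Then $\proj_{\delta'}(x_0)$ is $x_0$ itself when $t \leq \delta - \delta'$ (and $x_0 \in \ov{D^{\delta'}}$, so not in $\partial_s D^{\delta'}$) or equals $w$ when $t > \delta - \delta'$; in either case $\proj_{\delta'}(x_0) \notin \partial_s D^{\delta'}$. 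Applying this contrapositively to $x_0 = y$ for $y \in \partial_s D^{\delta'}$, since $\proj_{\delta'}(y) = y$ is singular, I deduce that $x := \proj_\delta(y) \in \partial_s D^\delta$.

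It remains to verify $y \in M_0^\delta(x)$. Fix an extreme unit vector $\hat\mu$ spanning $N_{D^{\delta'}}(y)$, corresponding to some $z_\mu \in \partial D$ with $|y - z_\mu| = \delta'$ and $\hat\mu = (z_\mu - y)/\delta'$. The supporting hyperplane to $D$ at $z_\mu$ with outward normal $\hat\mu$ shows that $p := y - (\delta - \delta')\hat\mu$ satisfies $|p - z_\mu| = \delta$ and $\langle p - w, \hat\mu\rangle \leq -\delta$ for every $w \in D^c$, giving $d(p, D^c) = \delta$ and hence $p \in \ov{D^\delta}$; thus $|y - x| \leq \delta - \delta' \leq \delta$. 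Since $x$ is the projection of $y$ onto the closed convex set $\ov{D^\delta}$, we have $y - x \in N_{D^\delta}(x)$, and Cauchy--Schwarz yields $\langle y - x, \nu_{D^\delta}^\pm(x) \rangle \leq |y - x| \leq \delta$, placing $y \in M_0^\delta(x)$. The hardest step is the bound $|y - x| \leq \delta - \delta'$: it rests on a delicate support-hyperplane argument at a point of $\partial D$ (not of $\partial D^\delta$), coupled with the geometric observation that singular points of the inner parallel set $\partial D^{\delta'}$ correspond to multiple closest points in $D^c$.
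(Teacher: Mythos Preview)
Your treatments of (i) and (ii) are correct and essentially match the paper's approach (direct triangle-inequality arguments and the observation that $M_0^\delta(x)$ collapses to the segment $[x,z]$ when $x\in\partial_r D^\delta$).

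The error is in the last paragraph of (iii), where you argue that $y\in M_0^\delta(x)$ via the bound $|y-x|\le \delta-\delta'$. This bound is false. Take $D=[-L,L]\times[-1,1]$ with $L$ large, so that $D^\alpha=[-(L-\alpha),L-\alpha]\times[-(1-\alpha),1-\alpha]$. For $y=(L-\delta',\,1-\delta')\in\partial_s D^{\delta'}$ one has $x=\proj_\delta(y)=(L-\delta,\,1-\delta)$ and $|y-x|=(\delta-\delta')\sqrt{2}>\delta-\delta'$. In the same example your auxiliary point $p=y-(\delta-\delta')\hat\mu$ with $\hat\mu=(1,0)$ equals $(L-\delta,\,1-\delta')$, which lies \emph{outside} $\ov{D^\delta}$ (its second coordinate $1-\delta'$ exceeds $1-\delta$). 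The step ``$\langle p-w,\hat\mu\rangle\le -\delta$ for every $w\in D^c$'' would force $D^c$ into a half-space, which is impossible since $D$ is bounded; this is where the supporting-hyperplane argument at $z_\mu$ breaks down.

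The repair is to use the supporting hyperplanes to $D$ at the points $z^\pm:=x+\delta\,\nu_{D^\delta}^\pm(x)\in\partial D$ (which exist because these are closest points of $D^c$ from $x$, and $B(x,\delta)\subset D$ forces $\nu_D(z^\pm)=\nu_{D^\delta}^\pm(x)$). Since $y\in\ov D$, one gets directly
\[
\langle y-x,\ \nu_{D^\delta}^\pm(x)\rangle
=\langle y-z^\pm,\ \nu_{D^\delta}^\pm(x)\rangle+\delta
\le \delta,
\]
and together with $y-x\in N_{D^\delta}(x)$ (projection property) this is precisely the membership $y\in M_0^\delta(x)$. No Cauchy--Schwarz or $|y-x|$ bound is needed. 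The paper does not spell this out inside the proof of the lemma, but exactly this computation appears in the proof of Lemma~\ref{last} when showing $\ov D\subset\Sigma_\lambda$.
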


\begin{proof}
(i) First we  show that $D^\delta\subset E$ where 
 $$E:=\aco{ x\in D^{\delta'} \sepa d(x, (D^{\delta'})^c) > \delta-\delta' } =\aco{ x\in D^{\delta'} \sepa B(x,\delta-\delta') \subset D^{\delta'} }.$$
For every $x\in D^\delta$, $B(x,\delta)\subset D$ implies that
\begin{align*}
\delta < d(x,D^c) \le d(x,\partial D^{\delta'}) + d(\partial D^{\delta'}, D^c) = d(x,\partial D^{\delta'}) +\delta'.
\end{align*} 
That means $d(x,(D^{\delta'})^c) = d(x,\partial D^{\delta'}) > \delta-\delta'$. It is to say that $x$ is in $E$.This $D^\delta \subset E$.

Conversely, for every $x\in E$, the inclusion $B(x,\delta-\delta')\subset D^{\delta'}$ implies that $ d(\partial B(x,\delta-\delta'), D^c) >\delta'$. Thus, we obtain
\begin{align*}
d(x, D^c)
&= d(x,\partial B(x, \delta-\delta')) + d(\partial B(x,\delta-\delta'), D^c) \\
& > (\delta-\delta') + \delta' =\delta.
\end{align*}
This shows that $x\in D^{\delta}$, whence $E\subset D^{\delta}$. 

\medskip(ii) Given $x\in \partial_r D^{\delta}$ and $z\in\partial D\cap M_0^\delta(x)$, we have $z=x+p$ with $|p|\le \delta$. It follows that
\begin{align*}
\delta =d(x, D^c) \le |x - z| \le \delta.
\end{align*}
Or, $d(x,z)=d(x,D^c)=\delta$.

For every $y\in M_0^\delta(x)$, we will prove that $|y - z|=d(y, D^c)$. Suppose that $d(y,D^c) < |y - z|$ and $d(y, D^c)= |y - \ov z|$ for some $\ov z\in \partial D$, $\ov z\neq z$. Then, $|y - \ov z| < |y - z|$. Recall that as $x\in\partial_r D^\delta$, $M_0^\delta(x)$ is a segment joining $x$ and $z$. We have
\begin{align*}
|x - \ov z| &\le |x - y| + |y - \ov z| \\
&< |x - y| + |y - z| = \delta
\end{align*}
while $|x - \ov z| \ge d(x, D^c) =\delta$. This gives a contradiction. So, $d(y,D^c)= |y - z|$.

\medskip (iii) It is equivalent to prove that for every $\delta' <\delta$, $y\in\partial_s D^{\delta'}$ implies that $\proj_\delta (y) \in \partial_s D^\delta$. Suppose that $y\in \partial_s D^{\delta'}$ and $\proj_\delta (y)\in \partial_r D^\delta$. From (i) and (ii), we derive that $|y-\proj_\delta (y)|=\delta-\delta'$. As $\proj_\delta (y)\in \partial_r D^\delta$, the ball $B(\proj_\delta (y),\delta)$ then touches the boundary of $D$ at a unique point called $z$. Let $w$ be the intersection of segment $M_0^\delta(\proj_\delta (y))=[\proj_\delta (y),z]$ and $\partial D^{\delta'}$. By using (ii), $d(w,D^c)=\delta'$. The ball $B(w, \delta')$ contained in $B(\proj_\delta (y),\delta)$ touches $\partial D$ at and only at $z$. In other words, $w$ is in $\partial_r D^{\delta'}$. Besides, both $y$ and $w$ are in $M_0^\delta(\proj_\delta (y))$. It is easy to see that they coincide. We conclude that $y$ belongs to $\partial_r D^{\delta'}$, a contradiction. The proof is complete.
\end{proof}

\begin{lemma}
The following assertions hold true:\label{lem:mod1}
\begin{itemize}
\item[(i)] Given $x\in\partial D^\delta$ and $x^*\in \partial D$ be such that $|x-x^*|=d(x,D^c)$. Then, for every $\delta'\le \delta$, $\proj_{\delta'} (x^*)$ belongs to the open segment $]x,x^*[$, that is: 
\begin{align*}
\exists t\in (0,1) \sepa \proj_{\delta'} (x^*)= (1-t) x+ t x^*.
\end{align*}

\item[(ii)] Let  $y\in M^\delta_0(x)$ such that $x=\proj_\delta (y)$. Then   $\proj_{\delta'} (y) \in M^\delta_0 (x)$ for every $\delta'<\delta$.
\end{itemize}
\end{lemma}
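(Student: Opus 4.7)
Plan.

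For Part (i), I would proceed by an explicit construction. Noting that $|x-x^*|=d(x,D^c)=\delta$ because $x\in\partial D^\delta$, set
\[ z := \tfrac{\delta'}{\delta}\,x+ \tfrac{\delta-\delta'}{\delta}\,x^*, \]
the point of $[x,x^*]$ at distance $\delta'$ from $x^*$ and $\delta-\delta'$ from $x$. On the one hand $B(z,\delta')\subset B(x,\delta)\subset D$ since $|z-x|+\delta'=\delta$, so $z\in\ov{D^{\delta'}}$. On the other hand any competitor $w\in\ov{D^{\delta'}}$ satisfies $|w-x^*|\ge d(w,D^c)\ge\delta'$ because $x^*\in\partial D\subset D^c$. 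By uniqueness of the convex projection onto the closed convex set $\ov{D^{\delta'}}$, we obtain $\proj_{\delta'}(x^*)=z$, which is of the required form with parameter $t=(\delta-\delta')/\delta\in(0,1)$ for $0<\delta'<\delta$.

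For Part (ii), set $u:=\proj_{\delta'}(y)$ and write $\nu^\pm$ for $\nu_{D^\delta}^\pm(x)$. One must verify the two defining conditions of $M^\delta_0(x)$. The half-plane condition $\langle u-x,\nu^\pm\rangle\le\delta$ is automatic: the inscribed ball $B(x,\delta)\subset D$ is tangent to $\partial D$ at each of $x+\delta\,\nu^\pm$, so the lines $\langle z-x,\nu^\pm\rangle=\delta$ are supporting lines of $\ov D$, and $u\in\ov{D^{\delta'}}\subset\ov D$ lies in the corresponding closed half-planes. The cone condition $u-x\in N_{D^\delta}(x)$ is the substantive part, equivalent to $\proj_\delta(u)=x$, and I would prove it by a case split on the regularity of $x$.

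When $x$ is regular, $M^\delta_0(x)$ is the segment $[x,\,x+\delta\,\nu_{D^\delta}(x)]$ by Lemma \ref{Mdd}\,(ii), and the point $w:=x+(\delta-\delta')\nu_{D^\delta}(x)$ lies in $\partial D^{\delta'}$ with outward unit normal $\nu_{D^\delta}(x)$ (via the Thales construction of Part (i) applied to $x+\delta\,\nu_{D^\delta}(x)\in\partial D$). Since $y-x$ is colinear with $\nu_{D^\delta}(x)$, the convex-projection characterization forces $u=y$ (if $y\in\ov{D^{\delta'}}$) or $u=w$ (otherwise), and in both cases $u\in[x,w]\subset M^\delta_0(x)$. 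When $x$ is singular, $\partial D^{\delta'}$ inherits a matching singular point $x^\dagger$ of the form $x+a(\nu^++\nu^-)$ with $a>0$, obtained as the intersection of the two supporting lines $\{\langle z-x,\nu^\pm\rangle=\delta-\delta'\}$ of $\ov{D^{\delta'}}$, which are parallel to the supporting lines of $\ov D$ at $x+\delta\nu^+$ and $x+\delta\nu^-$. A subcase analysis, on whether $u$ coincides with $y$ (so $y\in\ov{D^{\delta'}}$), with $x^\dagger$ itself, or with a point on one of the two smooth arcs of $\partial D^{\delta'}$ emanating from $x^\dagger$ (each carrying outward normal $\nu^+$ or $\nu^-$), then places $u-x$ in the cone generated by $\nu^+,\nu^-$ in each instance.

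The main obstacle is this singular case, where one must rigorously justify that these three subcases exhaust the possibilities (using Lemma \ref{Mdd}\,(iii) to preclude $u$ from leaving the local cone structure) and carry out the projection-direction bookkeeping case by case. The two-dimensional convexity is essential here: the normal cone has only two extreme rays, the supporting lines of $\ov D$ and $\ov{D^{\delta'}}$ are parallel along the inward normal flow, and the local geometry of $\partial D^{\delta'}$ near $x^\dagger$ reduces to the finite list of subcases indicated above.
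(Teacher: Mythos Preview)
Your Part (i) is correct and essentially the paper's own argument: both identify the same point $z$ on $]x,x^*[$ at distance $\delta'$ from $x^*$, verify $z\in\ov{D^{\delta'}}$ via the nested-balls inclusion, and conclude by uniqueness of the convex projection.

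For Part (ii) your route diverges from the paper and contains a genuine gap in the singular case. You assert that $\partial D^{\delta'}$ ``inherits a matching singular point $x^\dagger$'' at the intersection of the two supporting lines $\{\langle z-x,\nu^\pm\rangle=\delta-\delta'\}$, and then build your subcase analysis on the local picture of two smooth arcs emanating from this corner. But $x^\dagger$ need not lie on $\partial D^{\delta'}$ at all. Take $D=S\oplus B_\epsilon$, a square $S$ with corners rounded at radius $\epsilon$, and choose $\delta'<\epsilon<\delta$. Then $D^\delta$ is a smaller square with a sharp corner $x$ (normals $\nu^\pm$ along the axes), while $D^{\delta'}=S\oplus B_{\epsilon-\delta'}$ is still a rounded square with \emph{no} singular boundary points. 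The two supporting lines you write down are tangent to the rounded corner and meet at a point strictly outside $\ov{D^{\delta'}}$. Your trichotomy ``$u=y$, $u=x^\dagger$, or $u$ on one of the two arcs from $x^\dagger$'' then collapses, and the direction-bookkeeping you sketch cannot be carried out as stated. Lemma~\ref{Mdd}\,(iii) only gives the implication $\partial_sD^{\delta'}\subset\bigcup_{x\in\partial_sD^\delta}M^\delta_0(x)$, not the converse you would need.

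The paper avoids all of this with a short contradiction argument that works uniformly: if $y^*:=\Pi_{\delta'}(y)\notin M^\delta_0(x)$, then the segment $[y,y^*]$ crosses one of the cone-edge segments $[x,x^+]$ (with $x^+=x+\delta\nu^+\in\partial D$) at some point $z$; applying Part (i) to $x^+$ gives $z^*:=\Pi_{\delta'}(x^+)\in[x,x^+]$, and since $z^*=\Pi_{\delta'}(z)$ one obtains $|y-z^*|\le|y-z|+|z-z^*|\le|y-z|+|z-y^*|=|y-y^*|$, contradicting uniqueness of the projection (as $z^*\in M^\delta_0(x)$ but $y^*\notin M^\delta_0(x)$). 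No regularity split is needed. Your observation that the half-plane constraints $\langle u-x,\nu^\pm\rangle\le\delta$ are automatic for $u\in\ov D$ is correct and is exactly what guarantees the crossing point $z$ lies on the cone boundary within $[x,x^+]$; that observation is the one piece of your plan worth keeping.
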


\begin{proof}
(i)
For each $z\in[x,x^*]$, $z$ can be parametrized as
\begin{align*}
z(t):= (1-t)x^* + tx.
\end{align*}
If we take $z^*=z(\delta'/\delta)$ then $z^*\in \partial D^{\delta'}$. In fact, it holds $d(z^*,D^c)=\delta'$ since
\begin{align*}
\delta' = |z^* - x^*| \ge d(z^*,D^c) \ge d(z^*,\partial B(z^*,|z^*-x^*|))=\delta'.
\end{align*}
Besides, we have
\begin{align*}
\delta' = | x^* - z^*| \ge \inf_{y\in D^{\delta'}} |x^* - y| \ge \delta'.
\end{align*}
It turns out that $\proj_{\delta'} (x^*)=z^*$. 

%
%

(ii) 
Suppose that $y^*:= \proj_{\delta'} (y) \notin M^\delta_0 (x)$ (see Figure \ref{fig:proof} for illustration). Without loss of generality, we can assume that the segment $[y,y^*]$ intersects $[x,x^+]$ at $z$, where $x^+\in \partial D$ such that $|x-x^+|=d(x,D^c)$. Let $z^*$ be the intersection of the segment $[x,x^+]$ and $\partial D^{\delta'}$. By (i), we get that $\proj_{\delta'} (x^+)=z^*$. So, the segment $[z^*,x^+]$ is contained in $ M^{\delta'}_0 (z^*)$. 
Therefore, for every $z\in [z^*,x^+]$, $z$ admits $z^*$ as its unique projection on $D^{\delta'}$. It is then clearly that
\begin{align*}
|y-z^*|\le |y-z|+|z-z^*| \le |y-z| + |z-y^*|=|y-y^*|
\end{align*}
gives a contradiction to the fact that $y^*:= \proj_{\delta'} (y)$.
\end{proof}

\begin{lemma}\label{alpha}
Let $\alpha(x,\delta)$ be a function defined on $\ov D\times [0,R]$ by
\begin{align}\label{def:alpha}
\alpha(x,\delta) := d(x,D^\delta) - \delta .
\end{align}
Then, for every  $x\in D\setminus\ov{\Omega_{\frac{1}{R}}}$, the function $\alpha(x,\cdot)$ is continuous on $[0,R]$ and exhibits the following behavior:
\begin{align*}
\alpha(x,\delta)= 
\begin{cases}
 -\delta \qquad &\tinm{if} 0 < \delta \le d(x,D^c) \\
 -d(x,D^c) & \tinm{if} x\notin \Lambda_D \, \tinm{and}\, d(x,D^c) < \delta < \gamma(x)\\
\tinm{is strictly increasing} &\tinm{on } [\gamma(x),R] .
\end{cases}
\end{align*}
Therefore, if $x\in D$, $\alpha(x,\cdot)$ vanishes at the unique positive  $\delta:=\rho(x)$ where $\gamma(x)< \rho(x)< R$.

\medskip
On the other hand, if $x\in \partial D$, then $\gamma(x)=\tau(x)$ and $\alpha(x,\cdot)$ vanishes on $[0,\tau(x)]$ while $\alpha(x,\cdot)>0$ on the possibly empty
inerval $(\tau(x), R]$.
\end{lemma}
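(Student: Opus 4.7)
The plan is to analyze $\alpha(x,\delta)=d(x,D^\delta)-\delta$ piece by piece on $[0,R]$, exploiting the uniqueness of the projection $\proj_\delta$ onto the convex set $\ov{D^\delta}$ together with Lemmas~\ref{stmonotone} and~\ref{Mdd}. Continuity of $\alpha(x,\cdot)$ is immediate from Lemma~\ref{stmonotone}(i). For $0<\delta\le d(x,D^c)$, the inclusion $x\in\ov{D^\delta}$ gives $d(x,D^\delta)=0$, hence $\alpha(x,\delta)=-\delta$.

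For $x\notin\Lambda_D$ and $d(x,D^c)<\delta<\gamma(x)$, set $x^*:=\proj_{D^c}(x)$, $\nu:=(x-x^*)/|x-x^*|$ and $y_\delta:=x^*+\delta\,\nu$. By the definition of $\zeta(x)=\gamma(x)-d(x,D^c)$, the function $t\mapsto d(x^*+t\nu,D^c)$ equals $t$ on $[0,\gamma(x)]$, so $y_\delta\in\ov{D^\delta}$; conversely any $z\in\ov{D^\delta}$ satisfies $|z-x^*|\ge\delta$, giving $|z-x|\ge|z-x^*|-|x-x^*|\ge\delta-d(x,D^c)=|x-y_\delta|$. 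Hence $y_\delta=\proj_\delta(x)$ and $\alpha(x,\delta)=-d(x,D^c)$.

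The core step is strict monotonicity on $[\gamma(x),R]$. Fix $\gamma(x)\le\delta_1<\delta_2\le R$, set $y_i:=\proj_{\delta_i}(x)$, and note that $\delta_2>d(x,D^c)$ forces $x\notin\ov{D^{\delta_2}}$, so $y_2\in\partial D^{\delta_2}$ and $v:=(x-y_2)/|x-y_2|$ is an outward unit normal to $D^{\delta_2}$ at $y_2$. By Lemma~\ref{Mdd}(i) one has $\ov{B(y_2,\delta_2-\delta_1)}\subset\ov{D^{\delta_1}}$, so $z:=y_2+(\delta_2-\delta_1)v\in\ov{D^{\delta_1}}\cap[y_2,x]$; this yields the non-decreasing estimate $\alpha(x,\delta_1)\le\alpha(x,\delta_2)$. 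If equality held with common value $c$, uniqueness of the projection would force $y_1=z$, and since $v\in N_{D^{\delta_i}}(y_i)$, the inscribed balls $B(y_i,\delta_i)\subset\ov D$ would both touch $\partial D$ at the common point $x^{**}:=y_i+\delta_i v$. Then $x=y_i+|x-y_i|\,v=x^{**}+c\,v$, so $|x-x^{**}|=|c|$, and combined with $c=\alpha(x,\delta_i)\ge-d(x,D^c)$ (inherited from the middle regime via monotonicity) one deduces $|c|=d(x,D^c)$; thus $x^{**}$ is a nearest boundary point of $x$ and $-v$ is the corresponding inward normal. But then the normal distance from $x^{**}$ to the cut locus is $\tau(x^{**})=d(x,D^c)+\zeta(x)=\gamma(x)<\delta_2$, so $y_2=x^{**}-\delta_2 v$ lies strictly beyond the cut locus of $x^{**}$ along this normal ray, whence $d(y_2,D^c)<\delta_2$, contradicting $y_2\in\ov{D^{\delta_2}}$.

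Assembling the three regimes with $\alpha(x,R)>0$ (equivalent to $x\notin\ov{U_D}$, since $D^R=\Sigma_D$), the intermediate value theorem combined with strict monotonicity produces a unique $\rho(x)\in(\gamma(x),R)$ with $\alpha(x,\rho(x))=0$. For $x\in\partial D$ the argument is identical after replacing $\nu$ by the inward normal $-\nu_D(x)$: the first regime is empty, the middle one yields $\alpha\equiv 0$ on $[0,\tau(x)]$, and strict monotonicity on $[\tau(x),R]$ follows from the same rigidity argument. The main obstacle is precisely this rigidity argument; the edge cases $x\in\Lambda_D$ and $x^{**}\in\partial_s D$ introduce no new difficulty because the identification $\tau(x^{**})=d(x,D^c)$ (with $\zeta(x)=0$ when $x\in\Lambda_D$) is all that is used.
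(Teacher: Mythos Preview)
Your overall strategy matches the paper's: continuity via Lemma~\ref{stmonotone}, the two easy regimes, then weak monotonicity on $[\gamma(x),R]$ followed by a rigidity analysis of the equality case. The middle-regime argument (explicit $y_\delta$ plus a triangle inequality) is a clean variant of the paper's use of Lemma~\ref{lem:mod1}, and the weak monotonicity step via $z=y_2+(\delta_2-\delta_1)v$ is essentially the paper's inequality $d(x,D^\delta)-d(x,D^{\delta'})\ge\delta-\delta'$.

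The rigidity argument, however, has a real gap. From $v\in N_{D^{\delta_i}}(y_i)$ alone you \emph{cannot} conclude that $x^{**}:=y_i+\delta_i v\in\partial D$: if $y_i\in\partial_s D^{\delta_i}$ the normal cone is two-dimensional, and for non-extreme directions $v$ the point $y_i+\delta_i v$ lies strictly inside $D$ (think of $D$ a square and $y_i$ a vertex of $D^{\delta_i}$). What actually forces $x^{**}\in\partial D$ in the equality case is the extra information that $y_1=y_2+(\delta_2-\delta_1)v$ with $d(y_i,D^c)=\delta_i$: the $1$-Lipschitz concave function $t\mapsto d(y_2+tv,D^c)$ drops from $\delta_2$ to $\delta_1$ over an interval of length $\delta_2-\delta_1$, hence equals $\delta_2-t$ identically, giving $d(x^{**},D^c)=0$. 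The same concavity argument then gives $x^{**}\in\Pi_{D^c}(y_2)$, hence $v\in N_D(x^{**})$; this is what rules out $c\ge 0$ (since $x=x^{**}+cv\in D$), which you need before ``$c\ge -d(x,D^c)$'' can yield $|c|=d(x,D^c)$. Once these points are supplied your contradiction via $\tau(x^{**})\le\gamma(x)<\delta_2$ goes through; the paper reaches the same contradiction by instead invoking Lemmas~\ref{Mdd}(ii),(iii) to force $\proj_\delta(x)\in\partial_r D^\delta$ and then locating the cut-locus point inside the segment $M_0^\delta(\proj_\delta(x))$.
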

\begin{proof}
The continuity statement results from the assertion (i) of Lemma \ref{stmonotone}.
 If $0<\delta \le d(x,D^c)$, then $x$ belongs to $\ov{D^\delta}$ and  $d(x,D^\delta)= 0$ implies that  $\alpha(x,\delta)=-\delta$.

\medskip
Next, we assume that  $x\notin \Lambda$ and  we prove that $\alpha(x,\delta) = -d(x,D^c)$ if $\delta$ belongs to the interval $(d(x,D^c),\gamma(x))$. 
Note that, by the definition  \eqref{ndis}, it holds  $\eta(x)=0$ if $x\in \Lambda_D$, making the latter interval is empty. Then it exits $\ov x\in \ov\Lambda$ 
and $x^*\in \partial D$ such that
\begin{align*}
\ov x= x +\zeta(x) \nabla d(x,D^c)\ ,\quad |\ov x - x^*| = d(\ov x,D^c)= \gamma(x) .
\end{align*}
Since we assumed that  $d(x,D^c)< \delta <\gamma(x)$, $D^\delta$ meets the segment $[\ov x, x^*]$ at a unique point $w$ such that  $w\in [\ov x,x^*]$ and $w \in \partial_r D^\delta$. By Lemma \ref{lem:mod1} (i), we deduce that $w = \proj_\delta (x^*)= \proj_\delta (x)$.
As a consequence, keeping in mind that $|x-x^*| = d(x,D^c)$ (see Lemma \ref{Mdd} (ii)), we obtain
\begin{align*}
d(x, D^\delta) = |x- w|  =  |x^*-w| - |x^*-x| = \delta - d(x,D^c).
\end{align*}
So, we conclude that $\alpha(x,\delta) = d(x,D^\delta) -\delta = -d(x,D^c)$ as claimed. 

\medskip
Finally, let us show that  the continuous function $\alpha(x,\cdot)$ is strictly increasing in $[\gamma(x), R]$. If it is the case, then as  $\alpha(x,\gamma(x))= -d(x,D^c) < 0$
and $\alpha(x,R)= d(x,D^R) - R > 0$ (since $x\notin \ov U_D$), we deduce 
 the existence of a unique $\delta\in (\gamma(x), R)$ such that $\alpha(x,\delta) =0$ as claimed.

Let $\delta$ and $\delta'$ such that $\gamma(x) <\delta'< \delta<R$. Then, since $\gamma(x)\ge d(x,D^c)$, we have  $d(x,D^c)<\delta'$ , hence   the ball $B(x, |x- \proj_{\delta'}(x)|)$ is contained in $(D^{\delta'})^c$ . It follows that
\begin{align*}
d(x,D^\delta)- d(x, D^{\delta'}) 
&= |x- \proj_\delta (x)| -|x- \proj_{\delta'} (x)| \\
&= d ( \proj_\delta (x), B(x, |x- \proj_{\delta'} (x)|) ) \\
&\ge d( \proj_\delta (x), \partial D^{\delta'})\\
&= \delta- \delta'.
\end{align*}
The third line inequality  becomes an equality  if and only if
\begin{align}
\proj_{\delta'} (\proj_\delta (x)) = \proj_{\delta'} (x) = \proj_{\ov {B(x,|x- \proj_{\delta'} (x)|)} } (\proj_\delta (x))  .\label{alp:mono2}
\end{align}
In this case $\Pi_{\delta'}(x)$ belongs to to the segment  $[ \proj_\delta (x),x]$ and  $\proj_\delta (x)\in \partial_r D^\delta$. By the asertion (iii) of Lemma \ref{Mdd}, it follows that $\proj_{\delta'} (x)\in \partial_r D^{\delta'}$ while, by assertion (ii),  $M_0^\delta(\proj_\delta (x))$ contains  the segment
$S:= \{ x +t_\delta \nabla d(x,D^c)\:\ t\in [0, \zeta(x)\}$. In paticular $\proj_\delta (x)$ can be rewritten as
\begin{align*}
\proj_\delta (x) = x +t_\delta \nabla d(x,D^c),
\end{align*}
for some $t_\delta>0$. Since $\gamma(x) < \delta$, we get $\zeta(x) < t_{\delta}$, or equivalently that  $x+\zeta(x) \nabla d(x,D^c) $ belongs to $M_0^\delta(\proj_\delta (x))$. This gives a contradiction to the fact that $x+\zeta(x)  \nabla d(x,D^c) $ is a singular point of $d(\cdot,D^c)$. So, we conclude that the relation \eqref{alp:mono2} never occurs for $\delta'>\gamma(x)$. It follows  $\alpha(x,\delta)>\alpha((x,\delta')$, whence the claimed strict monotony
property on $[\gamma(x), R]$.

Let us finally conclude with the case where $x\in \partial D$. Then $\gamma(x)=\tau(x)$ and clearly $\alpha(x,\cdot)$ satisfies the required properties 
 since $\tau(x)= \max \{t\ge 0: P_{\partial D}(x- t \, \nu_D(x))=x\}$ if $x\in \partial_r D$ and $\tau(x)=0$ if $x\in \partial_s D$.

\end{proof}

\begin{lemma}\label{access}
Let $D$ be a convex  domain  of $\RR^2$. For every $x\in \partial D$, we define
\begin{align*}
k_{\partial D}(x):= \frac{1+ \nu_D^+(x)\cdot\nu_D^-(x)}{2}.
\end{align*}
Then, we have
\begin{itemize}
\item[(i)] for every $x\in\partial D$, $0<k_{\partial D}(x) \le 1$;
\item[(ii)] $k_{\partial D}(x) = 1$ for every $x\in \partial_r D$;
\item[(iii)] $\forall\e \in(0,1)$, the set  $\aco{x \sepa k_{\partial D}(x) <\e }$ is finite.
\end{itemize}
\end{lemma}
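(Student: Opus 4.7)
The plan is to dispose of the easy assertions first, then focus on assertion (iii), which is the only substantive one. For (ii), the argument is immediate: at $x\in\partial_r D$ one has $\nu_D^+(x)=\nu_D^-(x)$, so the scalar product equals $1$ and $k_{\partial D}(x)=1$. For (i), the upper bound $k_{\partial D}(x)\le 1$ follows from the Cauchy--Schwarz inequality applied to unit vectors. The strict positivity $k_{\partial D}(x)>0$ is equivalent to $\nu_D^+(x)\cdot\nu_D^-(x)>-1$, i.e.\ to the two limit normals not being antipodal, which I would deduce from convexity: since $D$ is open with non-empty interior, the tangent cone $T_D(x)$ has non-empty interior, so its polar $N_D(x)$ is a pointed cone (it contains no line). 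As $N_D(x)$ is generated by the rays $\RR_+\nu_D^\pm(x)$, the angle between $\nu_D^+(x)$ and $\nu_D^-(x)$ is strictly less than $\pi$.

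For (iii) I would rely on the standard fact that, for a bounded planar convex body, the outward normal makes a total turn of exactly $2\pi$ along $\partial D$. More precisely, parametrize $\partial D$ by arc length $s\mapsto x(s)$ over one period and write the unit tangent $\tau(s)=(\cos\theta(s),\sin\theta(s))$; convexity forces the angle $\theta$ to be non-decreasing (after an appropriate branch choice) with total increment $2\pi$. The normal $\nu=(-\sin\theta,\cos\theta)$ therefore rotates monotonically, its jump discontinuities occurring precisely at the singular points $x_j\in\partial_s D$. At each such point the jump has size $2\varphi_j\in(0,\pi)$, where $2\varphi_j$ is the (oriented) angle from $\nu_D^-(x_j)$ to $\nu_D^+(x_j)$; summing jumps over $\partial_s D$ yields the key estimate
$$\sum_{x_j\in\partial_s D} 2\varphi_j\ \le\ 2\pi.$$

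A direct trigonometric identity gives
$$k_{\partial D}(x_j)\ =\ \frac{1+\cos(2\varphi_j)}{2}\ =\ \cos^2\varphi_j,$$
so the inequality $k_{\partial D}(x_j)<\e$ becomes $\varphi_j > \arccos\sqrt{\e}>0$. The displayed bound on $\sum 2\varphi_j$ then limits the cardinality of $\{x\in\partial_s D : k_{\partial D}(x)<\e\}$ by $\pi/\arccos\sqrt{\e}$, which is finite; since (ii) rules out regular points from this set (as $\e<1$), assertion (iii) follows. The only delicate step in the whole argument is the monotone total-turning identity $\sum 2\varphi_j \le 2\pi$, which I would justify by the arc-length parametrization just described, exploiting that the signed area swept by the tangent direction of a simple closed convex curve is exactly $2\pi$; all remaining calculations are elementary trigonometry or routine convex-analytic facts.
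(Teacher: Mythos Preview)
Your proof is correct and follows essentially the same approach as the paper's: for (i) and (ii) you use the same convex-geometric facts (the normal cone being pointed, equivalently the half-angle $\varphi(x)<\pi/2$), and for (iii) you invoke the identical mechanism---the trigonometric identity $k_{\partial D}(x)=\cos^2\varphi(x)$ together with the bound $\sum_{x\in\partial_s D}\varphi(x)\le\pi$ coming from the total turning of the normal along a closed convex curve. The paper uses the slightly looser bound $\sum\varphi(x)\le 2\pi$ without spelling out the total-turning argument, whereas you justify it via the arc-length parametrization and obtain the sharper constant $\pi/\arccos\sqrt{\e}$ instead of $2\pi/\arccos\sqrt{\e}$; this is a cosmetic difference, not a different route.
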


\begin{proof}
We recall that the normal cone of $D$ at $x$ is given by
\begin{align*}
N_{D}(x):=\aco{ a \nu_D^+(x) + b \nu_D^-(x) \sepa a,b \in \RR_+ }.
\end{align*}
For every $x\in \partial D$, we denote by $\varphi(x)$ the angle
\begin{align*}
\varphi(x):= \frac{1}{2}\angle(\nu_D^-(x),\nu_D^+(x)),
\end{align*}
and by $T_D(x)$ the tangent cone of $D$ at $x$
\begin{align*}
T_D(x):= & \cl \aco{ s(y-x) \sepa y\in D,\; s\ge 0 }\\
=& \aco{a T_D^+(x) + b T_D^-(x) \sepa a,b \in \RR_+},
\end{align*}
where $T_D^\pm(x)$ are the left and right tangent unit vectors of $D$ at $x$. We denote by $\psi(x)$ the angle 
\begin{align*}
\psi(x):= \frac{1}{2}\angle (T_D^-(x),T_D^+(x)).
\end{align*} 
Since tangent and normal cones are polar each other, it holds $\varphi(x)+\psi(x) = \frac{\pi}{2}$ for every $x\in \partial D$.

\noindent
\emph{Proof of (i):} clearly, as $D$ is convex, we have 
$ 0\le \varphi(x)< \frac{\pi}{2}$ and  $0< \psi(x)\le \frac{\pi}{2}$.
Hence, for all $x\in\partial D$, it holds $0<k_{\partial D}(x) \le 1$ since
\begin{align*}
k_{\partial D}(x)=\frac{1+\cos 2\varphi(x) }{2} =\cos^2\varphi(x) = \sin^2 \psi(x).
\end{align*}

\noindent
\emph{Proof of (ii):}  As $D$ is convex, its boundary $\partial D$ admits at most countably many singular points. On the regular part $\partial_r D=\partial D\setminus \partial_s D$, we get $\nu_D^-(x)=\nu_D^+(x)$, i.e. $\varphi(x)=0$. Then, $k_{\partial D}(x)=1$.

\medskip
\noindent
\emph{Proof of (iii):} Let $\e$ such that $0<\e<1$ and denote
\begin{align*}
E_\e:=\{ x\in\partial D \sepa k_{\partial D}(x) <\e \}\quad,\quad N_\e:=\#(E_\e) .
\end{align*}
  The function $h(\varphi)$ defined by
\begin{align*}
h(\varphi):=\frac{1+\cos 2\varphi}{2}, \quad \tinm{for} \varphi\in [0,\frac{\pi}{2}).
\end{align*}
 is non increasing in $[0,\frac{\pi}{2})$, while  $h(\varphi(x))=k_{\partial D}(x)$ for every $x\in \partial D$.
There always exists $\ \varphi_e\in (0,\pi/2)$ such that $h(\varphi_\e)=\e$. We set
\begin{align*}
E_\e:=\{ x\in\partial D \sepa k_{\partial D}(x) <\e \}.
\end{align*}
By the monotonicity of $h$, we deduce that  $\varphi(x) > \ov \varphi$ for every $x\in E_\e$.
It follows that
\begin{align*}
N_\e\, \ov\varphi  < \sum_{x\in E}\varphi(x) \le \sum_{x\in\partial D}\varphi(x) \le 2\pi ,
\end{align*}
whence $N_\e \le \frac{2\pi}{ \ov\varphi} <+\infty$.
\end{proof}

\end{document}